\NeedsTeXFormat{LaTeX2e}[1996/06/01]

\documentclass[]{article}

\usepackage{makeidx}  

\usepackage{amsthm} 

\usepackage{latexsym}
\usepackage[pdftex]{graphics}
\usepackage{xypic}
\usepackage{amsmath}
\usepackage{amssymb}
\xyoption{all} 
\usepackage{mathrsfs}
\usepackage{eufrak}

\usepackage[hmargin=3.5cm,vmargin=3.5cm]{geometry}

\newtheorem{theorem}{Theorem}[section]

\newtheorem{lemma}[theorem]{Lemma}

\newtheorem{definition}[theorem]{Definition}

\newtheorem{proposition}[theorem]{Proposition}

\newtheorem{corollary}[theorem]{Corollary}

\newtheorem{remark}[theorem]{Remark}

\newcommand{\C}{{\mathcal C}}

\newcommand{\F}{{\mathcal F}}

\newcommand{\pIso}{\bf pIso}
\newcommand{\PI}{\bf pIso}

\newcommand{\dagg}{(\ )^\dagger}

\newcommand{\decode}{\rhd}
\newcommand{\code}{\lhd}

\newcommand{\dc}{{\lhd\hspace{-0.148em}\rhd}}

\begin{document}
%
%
%
\title{Classical structures based on unitaries}
%
%
\author{Peter Hines}
%
%
%

\maketitle              

\begin{abstract}
Starting from the observation that distinct notions of copying have arisen in different categorical fields (logic and computation, contrasted with quantum mechanics ) this paper addresses the question of when, or whether, they may coincide. 

Provided all definitions are strict in the categorical sense, we show that this can never be the case. However, allowing for the defining axioms to be taken up to canonical isomorphism,  a close connection between the  {\em classical structures} of categorical quantum mechanics, and the categorical property of {\em self-similarity} familiar from logical and computational models 
 becomes apparent. 
 
The required canonical isomorphisms are non-trivial, and mix both typed (multi-object) and untyped (single-object) tensors and structural isomorphisms; we give coherence results that justify this approach. 

We then give a class of examples where distinct self-similar structures at an object determine distinct matrix representations of arrows, in the same way as classical structures determine matrix representations in Hilbert space. We also give analogues of familiar notions from linear algebra in this setting such as changes of basis, and diagonalisation. 

\end{abstract}

%
\section{Introduction}
This paper addresses the question of whether the {\em classical structures} used in categorical quantum mechanics (based on monoid co-monoid pairs with additional structure) can ever be built from unitary maps --- can the monoid and co-monoid arrows be mutually inverse unitaries? From a simplistic perspective, the answer is negative (Corollary  \ref{notypedassoc} and Corollary \ref{nodoublecc}); however when we allow the defining conditions of a classical structure to be taken {\em up to canonical isomorphism}, not only is this possible, but the required conditions (at least, using the redefinition of Abramsky and Heunen \cite{AH}) may be satisfied by any pair of mutually inverse unitaries (Theorem \ref{AHssFull}) with the correct typing in a $\dagger$ monoidal category.

However, the required canonical isomorphisms are non-trivial, and mix `typed' and `untyped' (i.e. multi-object and single-object) monoidal tensors and canonical isomorphisms. We study these, and in Appendix \ref{simplecoherence} demonstrate how the question of coherence in such a setting may  be reduced to the well-established coherence results found in \cite{MCL}. 

We  illustrate this connection with a concrete example, and show how in this setting, self-similar structures play an identical role to that played by classical structures in finite-dimensional Hilbert space --- that of specifying and manipulating matrix representations. We also give analogues of notions such as `changes of basis' and `diagonalisation' in this setting. 

\section{Categorical preliminaries}
The general area of this paper is firmly within the field of $\dagger$ monoidal categories. However, due to the extremal settings we consider, we will frequently require monoidal categories without a unit object. We axiomatise these as follows:
\begin{definition}\label{semicat}
Let $\C$ be a category. We say that $\C$ is {\bf semi-monoidal} when there exists a {\bf tensor}
$( \_\otimes \_ ) : \C\times \C \rightarrow \C$
together with a natural indexed family of {\bf associativity isomorphisms}
$\{  \  \tau_{A,B,C} : A\otimes (B\otimes C)\rightarrow (A\otimes B) \otimes C \} _{A,B,C\in Ob(\C)}$
satisfying MacLane's {\bf pentagon condition} $( \tau_{A,B,C}\otimes 1_D)   \tau_{A,B\otimes C,D}  (1_A \otimes  \  \tau_{B,C,D}) =    \tau_{A\otimes B,C,D}  \tau_{A,B,C\otimes D}$. 

When there also exists a natural object-indexed natural family of {\bf symmetry isomorphisms} $\{ \sigma_{X,Y}:X\otimes Y\rightarrow Y\otimes X \}_{X,Y\in Ob(\C) }$
satisfying MacLane's {\bf hexagon condition} $\tau_{A,B,C}\sigma_{A\otimes B,C}  \tau_{A,B,C} =(\sigma_{A,C}\otimes1_B)  \tau_{A,C,B} (1_A\otimes \sigma_{B,C})$ 
we say that $(\C,\otimes ,  \tau,\sigma)$ is a {\bf symmetric semi-monoidal category}. 
A semi-monoidal category $(\C,\otimes, \  \tau_{\_,\_,\_})$ is called {\bf strictly associative} when $  \  \tau_{A,B,C}$ is an identity arrow\footnote{This is not implied by equality of objects $A\otimes (B\otimes C) = (A\otimes B)\otimes C$, for all $A,B,C\in Ob(\C)$. Although MacLane's pentagon condition is trivially satisfied by identity arrows, naturality with respect to the tensor may fail. Examples we present later in this paper illustrate this phenomenon.}, for all $A,B,C\in Ob(\C)$. 
A functor $\Gamma : \C \rightarrow \mathcal D$ between two semi-monoidal categories $(\C ,\otimes_\C )$ and $(\mathcal D,\otimes_\mathcal D)$ is called (strictly) {\bf semi-monoidal} when $\Gamma (f\otimes_\mathcal C g) = \Gamma(f) \otimes_\mathcal D \Gamma(g)$.
A semi-monoidal category $(\C,\otimes)$ is called {\bf monoidal} when there exists a {\bf unit object} $I\in Ob(\C)$, together with, for all objects $A\in Ob(\C)$, distinguished isomorphisms 
$\lambda_A:I \otimes A \rightarrow A$ and $\rho_A : A\otimes I \rightarrow A$
satisfying MacLane's {\bf triangle condition} $1_U\otimes \lambda_V = (\rho_U\otimes 1_V) \tau_{U,I,V}$ for all $U,V\in Ob(\C)$.

A {\bf dagger} on a category $\C$ is simply a duality that is the identity on objects; 
that is,  a contravariant endofunctor $( \ )^\dagger:\C\rightarrow \C$ satisfying $ (1_A)^\dagger = 1_A$ and $\left( (f)^\dagger\right)^\dagger =f$, for all $A\in Ob(\C)$ and $f\in \C(A,B)$.  
An arrow $U\in \C(X,Y)$ is called {\bf unitary} when it is an isomorphism with inverse given by $U^{-1}=U^\dagger\in \C(Y,X)$.

When $\C$ has a (semi-) monoidal tensor $\_ \otimes \_ :\C\times \C\rightarrow \C$, we say that $(C,\otimes )$ is $\dagger$ (semi-) monoidal when $(\ )^\dagger$ is a (semi-) monoidal functor, and all canonical isomorphisms are unitary.
\end{definition}

\begin{remark}{\bf Coherence for semi-monoidal categories} A close reading of \cite{MCL} will demonstrate that MacLane's coherence theorems for associativity and commutativity are equally applicable in the presence of absence of a unit object. 
The theory of Saavedra units \cite{JK} also demonstrates that the properties of the unit object are independent of other categorical properties (including associativity). Motivated by this, we give a simple method of adjoining a strict unit object to a semi-monoidal category that is left-inverse to the obvious forgetful functor.
\end{remark}

\begin{definition} Let $(\C,\otimes)$ be a semi-monoidal category. We define its {\bf unit augmentation} to be the monoidal category given by the following procedure:
We first take the coproduct of $\C$ with the trivial group $\{ 1_I\}$, considered as a single-object dagger category. We then extend the tensor of $\C$ to the whole of $\C\coprod I$ by taking $\_ \otimes I = Id_{\C\coprod I} = I \otimes \_$. 
\end{definition}
It is straightforward that the unit augmentation of a semi-monoidal category is a monoidal category; a full proof, should one be needed, is given as an appendix to \cite{PHarxiv}. Similarly, it is a triviality that if $(C,\otimes)$ is $\dagger$ semi-monoidal, then its unit augmentation is dagger monoidal.

The connection of the above procedure with MacLane's coherence theorems for associativity and commutativity should then be clear; any diagram that commutes in $\C$ also commutes in the unit augmentation; conversely any diagram (not containing the unit object) that commutes in the unit augmentation also commutes in $\C$. Thus MacLane's coherence theorems (with the obvious exclusion of the unit object) also hold in the semi-monoidal and unitless cases.

\section{Classical structures and their interpretation}\label{classdefs}
Classical structures were introduced in \cite{CP} as an abstract categorical interpretation of {\em orthonormal bases} in Hilbert spaces and the special role that these play in quantum mechanics (i.e. as sets of compatible disjoint measurement outcomes). This intuition was validated in \cite{CPV}, where it is proved that in the category of  finite-dimensional Hilbert spaces, there is a bijective correspondence between orthonormal bases and classical structures. 
Mathematically, classical structures are symmetric $\dagger$ Frobenius algebras in $\dagger$ monoidal categories satisfying a simple additional condition.
\begin{definition}\label{FA}
Let $(\mathcal C ,\otimes ,I, (\ )^\dagger )$ be a  strictly associative monoidal category. 
A {\bf Frobenius algebra} consists of a co-monoid structure $(\Delta :S\rightarrow S \otimes S, \top : S \rightarrow I)$ and a monoid structure $(\nabla:S\otimes S \rightarrow S,\bot :  I\rightarrow S)$ at the same object, where the monoid / comonoid pair satisfy the {\bf Frobenius condition} 
\[ (1_S\otimes \nabla) (\Delta\otimes 1_S) = \Delta \nabla = (\nabla \otimes 1_S)(1_S\otimes \Delta) \]
Expanding out the definitions of a monoid and a comonoid structure gives:
\begin{itemize}
\item {\bf (associativity)} $\nabla(1_S\otimes \nabla) = \nabla(\nabla\otimes 1_S)\in \mathcal C(S\otimes S \otimes S , S)$.
\item {\bf (co-associativity)} $(\Delta \otimes 1_S)\Delta = (1_S \otimes \Delta)\Delta  \in \mathcal C(S,S\otimes S \otimes S)$.
\item {\bf (unit)} $\nabla(\bot \otimes 1_S) = \nabla (1_S\otimes \bot)$.
\item {\bf (co-unit)} $(\top \otimes _S)\Delta = 1_X\otimes \top) \Delta$.
\end{itemize}
A Frobenius algebra $(S,\Delta,\nabla,\top,\bot)$ in a $\dagger$ monoidal category is called a {\bf dagger Frobenius algebra} when it satisfies $\Delta^\dagger=\nabla$ and $\top^\dagger=\bot$.

Let $(\C,\otimes)$ be a symmetric $\dagger$ monoidal category, with symmetry isomorphisms $\sigma_{X,Y}\in \C(X\otimes Y,Y\otimes X)$. A $\dagger$ Frobenius algebra is called {\bf commutative} when 
$\sigma_{S,S}\Delta = \Delta$, and hence $\nabla = \nabla\sigma_{S,S}$. 
A {\bf classical structure} is then a commutative $\dagger$ Frobenius algebra satisfying the following additional condition:
\begin{itemize}
\item {(\bf The classical structure condition)} $\Delta^\dagger$ is left-inverse to $\Delta$, so $\nabla\Delta=1_S$.
\end{itemize}
\end{definition}

\begin{remark}\label{CSascopying} The intuition behind a classical structure is that it describes related notions of copying and deleting (the comonoid and monoid structures). The underlying intuition is that, although arbitrary quantum states are subject to the no-cloning and no-deleting theorems \cite{NC,ND}, quantum states that are `classical' (i.e. members of some fixed orthonormal basis -- the `computational basis' of quantum computation) can indeed be both copied and deleted (against a copy) using the fan-out maps and their inverses \cite{FANOUT}. 

An aim of this paper is to compare such a notion of copying with a  distinct notion of copying that arose independently in models  of resource-sensitive logical and computational systems \cite{GOI,GOI3}, and to demonstrate connections, via the theory of untyped categorical coherence, between these notions.
\end{remark}

\subsection{Classical structures without units}
As noted in \cite{AH}, when considering the theory of classical structures in arbitrary separable Hilbert spaces, is often necessary to generalise Definition \ref{FA} to the setting where unit objects are not considered -- i.e. to lose the {\em unit} and {\em co-unit} axioms. We refer to \cite{AH} for a study of how much of the theory of \cite{CP} carries over to this more general setting, and give the following formal definition, which is a key definition of \cite{AH} in the strict, semi-monoidal setting:

\begin{definition}\label{AHFA}
Let $(\C,\otimes)$ be a strictly associative semi-monoidal category. 
An {\bf Abramsky-Heunen  (A.-H.) dagger Frobenius algebra} consists of a triple $(S\in Ob(\C),\Delta:S\rightarrow S\otimes S,\nabla=\Delta^\dagger: S\otimes S \rightarrow S)$  satisfying
\begin{enumerate}
\item {\bf (associativity)} $\nabla(1_S\otimes \nabla)=(1\otimes \nabla) \nabla\in \C(S\otimes S\otimes S,S)$.
\item {\bf (Frobenius condition)} $ \Delta \nabla =(1_S\otimes \nabla) (\Delta \otimes 1_S) \ \in \mathcal C(S\otimes S, S\otimes S)$
\end{enumerate}
An A-H $\dagger$ Frobenius algebra is an {\bf A-H classical structure} when $(\C,\otimes )$ is symmetric, and the following two conditions are satisfied:
\begin{enumerate}
\item[3.] {\bf (Classical structure condition)} $\nabla\Delta =1_S$,
\item[4.] {\bf (Commutativity)} $\sigma_{S,S} \Delta = \Delta$.
\end{enumerate}
\end{definition}

\subsection{Classical structures, and identities up to isomorphism}\label{uptoiso}
It is notable that the definitions of the previous sections are based on strictly associative tensors. Consider the definition presented of a monoid within a category, $(1_A\otimes \nabla)\nabla = (\nabla\otimes 1_A)\nabla$. 
Drawing this as a  commutative diagram 
\[ \xymatrix{
A \otimes A \ar[d]_{\nabla\otimes 1_A}			& A \ar[r]^\nabla \ar[l]_\nabla	&  A \otimes A \ar[d]^{1_A\otimes \nabla} \\
(A\otimes A)\otimes A 						&						& A\otimes (A\otimes A) \ar@{-}[ll]^{Id.}
}
\]
demonstrates that this definition relies on the identity of objects $A\otimes (A\otimes A) = (A\otimes A) \otimes A$ required for strict associativity\footnote{We emphasise that such identities of objects are a necessary, but not sufficient, condition for strict associativity of a tensor; see the footnote to Definition \ref{semicat}.}
 in an essential way. The definition of a co-monoid requires the same identification of objects.

\noindent
Similarly, the Frobenius condition $(1_A\otimes \nabla)(\Delta \otimes 1_A) = \Delta\nabla$
may be drawn as 
\[ 
\xymatrix{ 
A \otimes A \ar[r]^\nabla \ar[d]_{\Delta \otimes 1_A} 		& A \ar[r]^\Delta 	& A \otimes A \\
A\otimes A \otimes A \ar[rr]_{Id}						&				& A\otimes A \otimes A \ar[u]_{1_A\otimes \nabla}
}
\]

\begin{remark}

A significant feature of this paper is the relaxation of these strict identities, to allow the above definitions to be satisfied up to canonical isomorphisms.  
When making this generalisation, the choice of canonical isomorphisms seems to be straightforward enough; however, there are other possibilities. We take a more general view and  allow the axioms above to be satisfied up to any canonical isomorphisms for which there exists a suitable theory of coherence.
\end{remark}

\section{Self-similarity, and $\dagger$ self-similar structures}
By contrast with the strongly physical intuition behind classical structures, self-similar structures were introduced to study infinitary and type-free behaviour in logical and computational systems. Their definition is deceptively simple --  they are  simply a two-sided form of the `classical structure' condition of Definition \ref{FA}. The following definition is based on \cite{PH98,PH99}:
\begin{definition}\label{basicselfsim}
Let $(\C,\otimes)$ be a semi-monoidal category. A  {\bf self-similar structure} $(S,\code,\decode )$ is an object 
$S\in Ob(\C)$, together with two mutually inverse arrows
\begin{itemize}
\item {\bf (code)} $\code\in \C(S\otimes S, S)$.
\item {\bf (decode)}  $\decode \in \C(S,S\otimes S)$.
\end{itemize}
satisfying $\decode\code = 1_{S\otimes S}$ and $\code \decode= 1_S$. A {\bf dagger self-similar structure} is a self-similar structure in a $\dagger$ monoidal category with unitary code / decode arrows. 
\end{definition}

\begin{remark}
Recall from Remark \ref{CSascopying} the intuition of the classical structures of categorical quantum mechanics as a (restricted form of) copying and deleting that is applicable to computational basis states only. The very simple definition of a self-similar structure above is also clearly describing a notion of copying, albeit at the level of objects rather than arrows; simply, there are canonical arrows that provide isomorphisms between one copy of an object, and two copies of an object.  
A key theme of this paper is the relationship between these two notions of copying: whether the monoid / comonoid structure of an A.H. classical structure can also be a $\dagger$ self-similar structure, and whether a classical structure can also define a monoid / comonoid satisfying the Frobenius condition, \&c. 

Instead of a simple yes/no answer, we will observe a close connection with the theory of categorical coherence and strictification. In the strict case, requiring unitarity of the monoid / comonoid arrows implies a collapse to the unit object (Corollaries  \ref{notypedassoc} and \ref{nodoublecc}), whereas, up to a certain set of (non-trivial) canonical isomorphisms, $\dagger$ self-similar structures do indeed satisfy the conditions for an A.-H. classical structure (Theorems \ref{AHssFull} and \ref{AHssStrict}).
\end{remark}

We will first require many preliminary results on self-similar structures and their relationship with the theory of monoidal categories; we start by demonstrating that  $\dagger$ self-similar structures are unique up to unique unitary:
\begin{proposition} \label{unique}
Let $(S,\code,\decode )$ be a $\dagger$ self-similar structure of a $\dagger$ semi-monoidal category $(\mathcal C,\otimes, \dagg)$. Then 
\begin{enumerate}
\item  Given an arbitrary unitary $U\in \C(S,S)$, then $(S,U\code ,  \decode U^{\dagger})$ is also a $\dagger$ self-similar structure.
\item Given $\dagger$ self-similar structures $(S,\code ,\decode)$ and $(S,\code',\decode' )$,  there exists a unique unitary $U\in \mathcal C(S,S)$ such that 
$\code' = U\code \in \mathcal C(S\otimes S,S)$ and $\decode'=\decode U^{\dagger} \in \mathcal C(S,S\otimes S)$.
\end{enumerate}
\end{proposition}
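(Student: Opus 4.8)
The plan is to treat this as a routine ``uniqueness up to unique isomorphism'' argument, of the kind familiar for any structure defined by a universal-type property, specialised to the self-dual situation of $\dagger$ self-similar structures. For part (1), I would simply verify the two defining equations directly: since $U$ is unitary, $U^\dagger U = 1_S = UU^\dagger$, so
\[
(\decode U^\dagger)(U\code) = \decode (U^\dagger U)\code = \decode\code = 1_{S\otimes S},
\qquad
(U\code)(\decode U^\dagger) = U(\code\decode)U^\dagger = U U^\dagger = 1_S.
\]
The only extra point to note is that $U\code$ and $\decode U^\dagger$ are mutually inverse \emph{and} adjoint to one another: $(U\code)^\dagger = \code^\dagger U^\dagger = \decode U^\dagger$ using $\code^\dagger = \decode$ (which holds because $\code$ is unitary), so the new pair is again a $\dagger$ self-similar structure.

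For part (2), given two $\dagger$ self-similar structures $(S,\code,\decode)$ and $(S,\code',\decode')$, the natural candidate is $U := \code'\decode \in \C(S,S)$. First I would check it is unitary: its inverse should be $\code\decode'$, and indeed $(\code'\decode)(\code\decode') = \code'(\decode\code)\decode' = \code'\decode' = 1_S$ and symmetrically $(\code\decode')(\code'\decode) = 1_S$; moreover $U^\dagger = \decode^\dagger\code'^\dagger = \code\decode'$ (again using $\code^\dagger = \decode$, $\code'^\dagger = \decode'$), so $U^\dagger = U^{-1}$ as required. Then $U\code = \code'\decode\code = \code'1_{S\otimes S} = \code'$, and $\decode U^\dagger = \decode\code\decode' = 1_{S\otimes S}\decode' = \decode'$ --- wait, $\decode\code\decode'$ has $\decode\code = 1_{S\otimes S}$ acting on the right object, giving $\decode'$. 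So existence is established.

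For uniqueness, suppose $V\in\C(S,S)$ is another unitary with $\code' = V\code$ and $\decode' = \decode V^\dagger$. Composing the first equation on the right with $\decode$ gives $V = V\code\decode = \code'\decode = U$. (Alternatively, the second equation composed on the left with $\code$ gives $\code\decode' = \code\decode V^\dagger = V^\dagger$, so $V^\dagger = \code\decode' = U^\dagger$.) Hence $V = U$, and the unitary is unique.

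I do not expect any genuine obstacle here: the whole argument is a short manipulation of the four identities $\decode\code = 1_{S\otimes S}$, $\code\decode = 1_S$, $\code^\dagger = \decode$, $\code'^\dagger = \decode'$, together with the fact that $\dagg$ is a contravariant identity-on-objects functor. The only place requiring a modicum of care is bookkeeping which composite lands in which hom-set ($\C(S,S)$ versus $\C(S\otimes S, S\otimes S)$) so that each cancellation $\decode\code = 1_{S\otimes S}$ or $\code\decode = 1_S$ is applied at the correct object; the self-similar equations are not symmetric in this respect, so one must track the order of composition precisely. There is no need to invoke coherence or any structure beyond Definition \ref{basicselfsim}.
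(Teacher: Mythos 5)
Your proposal is correct and follows essentially the same route as the paper: part (1) by direct cancellation using $U^\dagger U = 1_S = UU^\dagger$, and part (2) by taking $U = \code'\decode$ with inverse (and dagger) $\code\decode'$. If anything, you spell out slightly more than the paper does --- the check $(U\code)^\dagger = \decode U^\dagger$ in (1) and the explicit uniqueness step $V = V\code\decode = \code'\decode = U$ in (2), which the paper merely asserts --- and these details are accurate.
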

\begin{proof}$ \ $ \\
\begin{enumerate}
\item Since $U$ is unitary, $U\code  \decode U^\dagger  = 1_S$ and $ \decode U^\dagger U \code =1_{S\otimes S}$. Thus, as the composite of unitaries is itself unitary, $(S,U\code ,  \decode U^{\dagger})$ is a $\dagger$ self-similar structure.
\item We define $U=\code'\decode\in \C(S,S)$, giving its inverse as $U^{-1}= \code\decode'= U^\dagger$. The following diagrams then commute:
\[ \xymatrix{
S\otimes S \ar[d]_\code \ar[dr]^{\code'}		&		&&								&		S\otimes S 	\\
S \ar[r]_U							& S		&&	S \ar[ur]^{\decode'}\ar[r]_{U^{\dagger}}	& S \ar[u]_\decode 	
}
\]
and $U=\code'\decode$ is the unique unitary satisfying this condition.
\end{enumerate}
\end{proof}

\subsection{The `internal' monoidal tensor of a self-similar structure}\label{internals}
We now demonstrate a close connection between self-similar structures and and untyped (i.e. single-object) categorical properties:
\begin{theorem}\label{standard}
Let $(S,\code,\decode)$ be a self-similar structure of a semi-monoidal category $(\C,\otimes,\tau_{\_,\_,\_})$. Then the code / decode arrows determine a semi-monoidal tensor 
\[ \_ \otimes_\dc \_ : \C(S,S)\times \C(S,S)\rightarrow \C(S,S) \]
on the endomorphism monoid of $S$ given by, for all $a,b\in \C(S,S)$, 
\[ a\otimes_\dc b \ = \ \code (a\otimes b) \decode \in \C(S,S) \]
The associativity isomorphism for this semi-monoidal structure is given by 
\[ \tau_\dc \ = \ \code (\code \otimes 1_S) \tau_{S,S,S} (1_S\otimes \decode) \decode \]
When $(S,\otimes)$ is symmetric, with symmetry isomorphisms $\sigma_{X,Y}\in \C(X\otimes Y,Y\otimes X)$ then $ \_ \otimes_\dc \_ : \C(S,S)\times \C(S,S)\rightarrow \C(S,S)$ is a symmetric semi-monoidal tensor, with symmetry isomorphism $\sigma_\dc\in \C(S,S)$ given by $\sigma_\dc=\code \sigma_{S,S}\decode$.
\end{theorem}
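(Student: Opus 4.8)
The plan is to verify each claim by direct computation, exploiting systematically the two defining identities $\decode\code = 1_{S\otimes S}$ and $\code\decode = 1_S$ to collapse composites whenever a $\code$ meets a $\decode$ (or vice versa). The underlying principle is that $(S,\code,\decode)$ exhibits $\C(S,S)$ as a retract of $\C(S\otimes S, S\otimes S)$ compatibly with composition; indeed one checks first the multiplicativity identity $(a\otimes_\dc b)(c\otimes_\dc d) = \code(a\otimes b)\decode\,\code(c\otimes d)\decode = \code\bigl((ac)\otimes(bd)\bigr)\decode = (ac)\otimes_\dc(cd)$, using $\decode\code = 1_{S\otimes S}$ in the middle and functoriality of $\otimes$ on arrows. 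Combined with $1_S\otimes_\dc 1_S = \code(1_S\otimes 1_S)\decode = \code\decode = 1_S$, this shows $\_\otimes_\dc\_$ is a bifunctor $\C(S,S)\times\C(S,S)\to\C(S,S)$ (the two one-object categories being groupoid-free monoids, so "bifunctor" just means these two equations).

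Next I would establish naturality of $\tau_\dc$ with respect to $\otimes_\dc$, i.e. that for all $a,b,c\in\C(S,S)$ one has $\bigl((a\otimes_\dc b)\otimes_\dc c\bigr)\tau_\dc = \tau_\dc\bigl(a\otimes_\dc(b\otimes_\dc c)\bigr)$. Here the strategy is to expand both sides into composites in $\C$ built from $\code$, $\decode$, $\tau_{S,S,S}$, and the arrows $a,b,c$ tensored with identities, then repeatedly cancel adjacent $\code\decode$ and $\decode\code$ pairs, and finally invoke naturality of $\tau$ in $\C$ (applied to the three arrows $a, b, c$, or to $a\otimes b$ and $c$, etc.) to move the $\tau_{S,S,S}$ past the tensored arrows. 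One should also note, before or alongside this, that $\tau_\dc$ is itself invertible with inverse $\code(1_S\otimes\code)\tau_{S,S,S}^{-1}(\decode\otimes 1_S)\decode$ — again a pure cancellation check — so it is a genuine isomorphism. Then the pentagon condition for $\tau_\dc$ is deduced from the pentagon for $\tau$ in $\C$: expand the four-fold instances, cancel the intervening $\decode\code$ pairs (which appear precisely between consecutive $\code(\cdots)\decode$ blocks), and what remains is the image under $\code(-\!\otimes\! 1)(\cdots)(1\!\otimes\!-)\decode$-conjugation of MacLane's pentagon, hence an identity.

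For the symmetric case, the same method applies: one checks naturality of $\sigma_\dc = \code\,\sigma_{S,S}\,\decode$ with respect to $\otimes_\dc$ from naturality of $\sigma$ in $\C$ (sandwiching and cancelling), checks $\sigma_\dc$ is its own inverse using $\sigma_{S,S}^2 = 1_{S\otimes S}$ together with $\decode\code=1_{S\otimes S}$, and finally derives the hexagon for $(\tau_\dc,\sigma_\dc)$ by expanding into $\C$, cancelling all internal $\decode\code$ pairs, and quoting the hexagon for $(\tau,\sigma)$. The main obstacle — really the only place requiring care rather than bookkeeping — is keeping the bracketing and the placement of the structural isomorphisms straight when expanding the pentagon and hexagon: one must confirm that every adjacency produced by composing the defining blocks is genuinely a $\decode$ immediately followed by a $\code$ (so that it cancels to an identity on $S\otimes S$ or $S\otimes S\otimes S$), with no stray arrow wedged between them. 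Once the cancellation pattern is set up correctly, each coherence law for $\otimes_\dc$ reduces verbatim to the corresponding law for $\otimes$, so no new coherence content is introduced; this is exactly what makes the statement true and its proof essentially formal.
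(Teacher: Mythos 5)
Your proposal is correct in substance, and it is essentially the standard argument: the paper does not prove Theorem \ref{standard} itself but cites it from \cite{PH98,PH99,PH13}, and the verification being appealed to there is exactly the sandwich-and-cancel computation you outline (interchange and unit law for $\otimes_\dc$, naturality and invertibility of $\tau_\dc$ and $\sigma_\dc$, then pentagon and hexagon). One caveat on your bookkeeping: the claim that, once all $\decode\code$ pairs are cancelled, the pentagon for $\tau_\dc$ ``reduces verbatim'' to MacLane's pentagon overstates things. The pentagon in $\C$ at $(S,S,S,S)$ involves the components $\tau_{S,S\otimes S,S}$, $\tau_{S\otimes S,S,S}$ and $\tau_{S,S,S\otimes S}$, none of which literally occur in the expansion of $(\tau_\dc\otimes_\dc 1_S)\,\tau_\dc\,(1_S\otimes_\dc\tau_\dc)$, which contains only copies of $\tau_{S,S,S}$ flanked by $\code$'s and $\decode$'s. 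After cancellation you must still apply naturality of $\tau$ in each argument, e.g. $((1_S\otimes\decode)\otimes 1_S)\,\tau_{S,S,S}=\tau_{S,S\otimes S,S}\,(1_S\otimes(\decode\otimes 1_S))$, to turn the surviving composite into an instance of the pentagon (and likewise naturality of $\sigma$ for the hexagon). Since naturality is already in your toolkit, this is a repair of the bookkeeping rather than a missing idea. A cleaner way to package precisely this computation is the route the paper takes in its Appendix: define generalised code/decode arrows $\code_X,\decode_X$ for bracketed tensor powers of $S$, check that $\Phi(f)=\code_Y f\decode_X$ preserves composition and tensor and sends every $\tau_{X,Y,Z}$ (resp.\ $\sigma_{X,Y}$) to $\tau_\dc$ (resp.\ $\sigma_\dc$) --- a single naturality computation --- after which the pentagon and hexagon for $\otimes_\dc$ are the images under $\Phi$ of commuting diagrams in $\C$. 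Finally, fix the typo in your interchange computation: the result is $(ac)\otimes_\dc(bd)$, not $(ac)\otimes_\dc(cd)$.
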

\begin{proof}
This is a standard result of the categorical theory of self-similarity; see \cite{PH98,PH99,PH13} for the general construction, and \cite{PH98,MVL} for numerous examples based on inverse monoids.
\end{proof}

\begin{definition}\label{internal}
Let $(S,\code,\decode)$ be a self-similar structure of a semi-monoidal category $(\C,\otimes,\tau_{\_,\_,\_})$. We refer to the semi-monoidal tensor
\[ \_ \otimes_\dc \_ : \C(S,S)\times \C(S,S)\rightarrow \C(S,S) \]
given in Theorem \ref{standard} above as the {\bf internalisation of $(\ \otimes \ )$ by $(S,\code ,\decode)$}. We similarly refer to the canonical associativity isomorphism $\tau_\dc\in \C(S,S)$ (resp. symmetry isomorphism $\sigma_\dc \in \C(S,S)$ as the {\bf associativity isomorphism (resp. symmetry isomorphism) induced by $(S,\code ,\decode)$}. 
\end{definition}

\begin{remark}\label{notstrict}
It is proved in \cite{PHarxiv} (See also Appendix B of \cite{PH13}) that strict associativity for single-object semi-monoidal categories is equivalent to degeneracy (i.e. the single object being a unit object for the tensor).  Thus, even when $(\C,\otimes)$ is strictly associative,  the associativity isomorphism induced by $(S,\code,\decode)$ given by 
$\tau_\dc = \code (\code \otimes 1_S)  (1_S\otimes \decode) \decode$
is not the identity (at least, provided $S$ is not the unit object for $\_ \otimes_\dc \_ $).
\end{remark}

The following simple corollary of Theorem \ref{standard} above is taken from \cite{PH13}.
\begin{corollary}
Let $(S,\code,\decode)$ be a $\dagger$ self-similar structure of a $\dagger$ semi-monoidal category $(\C,\otimes,\tau_{\_,\_,\_})$. Then 
$ \_ \otimes_\dc \_ : \C(S,S)\times \C(S,S)\rightarrow \C(S,S)$, the internalisation of $\_ \otimes \_$ by $(S,\code,\decode)$, is a $\dagger$ semi-monoidal tensor.
\end{corollary}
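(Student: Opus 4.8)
The plan is to check the two conditions that, by Definition \ref{semicat}, make $\_\otimes_\dc\_$ a $\dagger$ semi-monoidal tensor on the one-object category $\C(S,S)$ (the endomorphism monoid of $S$, with composition, equipped with the restriction of $(\ )^\dagger$ — a legitimate dagger there since a dagger is the identity on objects). The two conditions are: (i) the restriction of $(\ )^\dagger$ to $\C(S,S)$ is a semi-monoidal functor for $\otimes_\dc$, i.e. $(a\otimes_\dc b)^\dagger = a^\dagger\otimes_\dc b^\dagger$ for all $a,b\in\C(S,S)$; and (ii) the induced canonical isomorphisms are unitary. That $\otimes_\dc$ is a semi-monoidal (resp.\ symmetric semi-monoidal) tensor at all is already supplied by Theorem \ref{standard}, so only (i) and (ii) remain.

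For (i) I would simply expand the definition of $\otimes_\dc$ and use contravariance of the dagger, the fact that $(\ )^\dagger$ is a semi-monoidal functor on $(\C,\otimes)$ (so $(a\otimes b)^\dagger = a^\dagger\otimes b^\dagger$), and the fact that for a \emph{dagger} self-similar structure $\code,\decode$ are unitary and mutually inverse, so $\code^\dagger = \code^{-1} = \decode$ and $\decode^\dagger = \code$:
\[ (a\otimes_\dc b)^\dagger = (\code(a\otimes b)\decode)^\dagger = \decode^\dagger (a\otimes b)^\dagger \code^\dagger = \code(a^\dagger\otimes b^\dagger)\decode = a^\dagger\otimes_\dc b^\dagger. \]

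For (ii) I would observe that $\tau_\dc = \code(\code\otimes 1_S)\tau_{S,S,S}(1_S\otimes\decode)\decode$ is a composite of unitaries: $\code$ and $\decode$ are unitary by hypothesis; $\tau_{S,S,S}$ is unitary because it is a canonical isomorphism of the $\dagger$ semi-monoidal category $(\C,\otimes)$; and $\code\otimes 1_S$ and $1_S\otimes\decode$ are unitary since the tensor of unitaries is unitary — using again that $(\ )^\dagger$ is semi-monoidal, $(f\otimes g)^\dagger(f\otimes g) = (f^\dagger f)\otimes(g^\dagger g) = 1\otimes 1$ and symmetrically, so $f,g$ unitary forces $f\otimes g$ unitary. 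Since a composite of unitaries is unitary, $\tau_\dc$ is unitary; the identical argument gives unitarity of $\sigma_\dc = \code\sigma_{S,S}\decode$ in the symmetric case.

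There is no real obstacle here: every step is an unwinding of Definitions \ref{semicat} and \ref{basicselfsim} together with Theorem \ref{standard}. The one point that must not be skipped is the appeal to the standing hypothesis that $(\ )^\dagger$ is already a semi-monoidal functor on $(\C,\otimes)$; this is exactly what licenses both $(a\otimes b)^\dagger = a^\dagger\otimes b^\dagger$ in (i) and the unitarity of $\code\otimes 1_S$ and $1_S\otimes\decode$ in (ii), and without it the statement would fail.
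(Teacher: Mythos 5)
Your proof is correct and is essentially the paper's own argument written out in full: the paper's proof is the one-line observation that the claim is immediate from $\code^\dagger=\decode$ and the definition of $\otimes_\dc$ and $\tau_\dc$ in terms of unitaries, and your checks (i) and (ii) are exactly the details that observation compresses.
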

\begin{proof}
This is immediate from the property that $\code^\dagger=\decode$, and the definition of $\_ \otimes_\dc \_$ and the canonical isomorphism $\tau_\dc \in \C(S,S)$ in terms of unitaries.
\end{proof}

\section{$\dagger$ Self-similar structures as lax A-H classical structures}\label{CSuptoiso}
We now demonstrate that, up to certain canonical coherence isomorphisms a $\dagger$ self-similar structure $(S,\code,\decode)$ of a symmetric $\dagger$ semi-monoidal category $(\C,\otimes , \tau_{\_,\_,\_},\sigma_{\_,\_} )$ satisfies the axioms for an A-H classical structure.  The precise coherence isomorphisms required are those generated by  
\begin{itemize}
\item The semi-monoidal coherence isomorphisms $\{ \tau_{\_,\_,\_} , \sigma_{\_ ,\_} \}$ of $(\C,\otimes)$
\item The induced coherence isomorphisms $\{ \tau_\dc , \sigma_\dc\}$ of $(\C(S,S),\otimes_{\dc})$ 
\item The semi-monoidal tensors $\_ \otimes\_$ and $\_ \otimes_\dc\_ $
\end{itemize}

\begin{theorem}\label{AHssFull}
Let $(S,\code,\decode)$ be a $\dagger$ self-similar structure of a symmetric $\dagger$ semi-monoidal category $(\C,\otimes , \tau_{\_,\_,\_},\sigma_{\_,\_} )$. Then the following conditions hold:
\begin{itemize}
\item {\bf (Lax associativity)} $\code(\code\otimes1_S)\tau_{S,S,S} = \tau_\dc \code (1_S\otimes \code)$ 
\item {\bf (Lax Frobenius condition)} $\decode \tau_\dc^{-1} \code  =  (1_S\otimes \code) \tau^{-1}_{S,S,S} (\decode\otimes 1_S)$
\item {\bf (Classical structure condition)} $\code\decode=1_S$
\item {\bf (Lax symmetry)} $\sigma_{S,S} \decode = \decode \sigma_\dc$
\end{itemize}
\end{theorem}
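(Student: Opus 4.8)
The plan is to verify each of the four conditions directly from the definitions of $\otimes_\dc$, $\tau_\dc$, and $\sigma_\dc$ given in Theorem \ref{standard}, using only the two structural facts available: that $(S,\code,\decode)$ consists of mutually inverse unitaries ($\code\decode = 1_S$, $\decode\code = 1_{S\otimes S}$, $\code^\dagger = \decode$), and that $(\C,\otimes,\tau,\sigma)$ satisfies MacLane's pentagon and hexagon together with naturality of $\tau$ and $\sigma$. The \textbf{classical structure condition} $\code\decode = 1_S$ is immediate --- it is literally half of the definition of a self-similar structure --- so nothing is to be done there.

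For \textbf{lax associativity}, I would start from the right-hand side $\tau_\dc\,\code(1_S\otimes\code)$, substitute the definition $\tau_\dc = \code(\code\otimes 1_S)\tau_{S,S,S}(1_S\otimes\decode)\decode$, and then use $\decode\,\code = 1_S$ at the object $S$ (note $\code(1_S\otimes\code)\in\C(S\otimes(S\otimes S),S)$, so after the leading $\decode$ we get a cancellation) together with $(1_S\otimes\decode)(1_S\otimes\code) = 1_S\otimes 1_{S\otimes S} = 1_{S\otimes(S\otimes S)}$ via functoriality of $\otimes$. The telescoping should collapse the right-hand side to $\code(\code\otimes 1_S)\tau_{S,S,S}$, which is exactly the left-hand side. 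This is essentially a bookkeeping exercise in cancelling inverse pairs, with care needed only to track which tensor power of $S$ each arrow acts on.

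For \textbf{lax symmetry} $\sigma_{S,S}\decode = \decode\,\sigma_\dc$, I would substitute $\sigma_\dc = \code\,\sigma_{S,S}\decode$ into the right-hand side, giving $\decode\,\code\,\sigma_{S,S}\decode = 1_{S\otimes S}\cdot\sigma_{S,S}\decode = \sigma_{S,S}\decode$, again just a single cancellation via $\decode\,\code = 1_{S\otimes S}$. The \textbf{lax Frobenius condition} is the one I expect to be the genuine obstacle: unlike the others it is not a pure cancellation, because it must encode the content of MacLane's pentagon. My approach would be to take the left-hand side $\decode\,\tau_\dc^{-1}\,\code$, expand $\tau_\dc^{-1} = \code(1_S\otimes\code)\tau_{S,S,S}^{-1}(\decode\otimes 1_S)\decode$ (the inverse of the unitary expression for $\tau_\dc$, with the factors reversed and daggered/inverted), cancel the outer $\decode\,\code$ and $\code\,\decode$ pairs, and reduce to showing $(1_S\otimes\code)\tau_{S,S,S}^{-1}(\decode\otimes 1_S) = (1_S\otimes\code)\tau_{S,S,S}^{-1}(\decode\otimes 1_S)$ --- which looks tautological, but the subtlety is that the domains and codomains must match up as arrows $S\otimes S \to S\otimes S$, and verifying this requires invoking naturality of $\tau$ with respect to $\decode\colon S\to S\otimes S$ to slide the self-similarity maps past the associator, i.e. $\tau_{S,S,S}(1_S\otimes\decode) = (\,\cdot\,)\decode$-type rewrites, at which point the pentagon identity for $\C$ is what guarantees the two bracketings agree. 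I would organize this as a short commuting-diagram chase: one pentagon cell for $(S,S,S,S)$, one naturality square for $\tau$ against $\decode$, and the unitarity cancellations around the boundary. If the direct computation proves too opaque, the fallback is to appeal to the coherence machinery promised in Appendix \ref{simplecoherence}, which reduces any such equation between canonical isomorphisms built from $\tau,\sigma,\tau_\dc,\sigma_\dc$ and the self-similarity maps to MacLane's classical coherence theorem --- so that the Frobenius identity holds for the same formal reason the pentagon does.
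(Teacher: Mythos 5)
Your proposal is correct and is essentially the paper's own argument: the paper proves the theorem by observing that lax associativity and the lax Frobenius condition are just the defining diagram of $\tau_\dc$ (i.e.\ $\tau_\dc = \code(\code\otimes 1_S)\tau_{S,S,S}(1_S\otimes\decode)\decode$) read two ways, that $\code\decode=1_S$ is part of the definition of a $\dagger$ self-similar structure, and that lax symmetry is simply the definition of $\sigma_\dc$ --- exactly the expansion-and-cancellation computations you carry out. The one place you misjudge the situation is the lax Frobenius condition: it is \emph{not} a genuine obstacle and does not encode the pentagon. Once you expand $\tau_\dc^{-1}=\code(1_S\otimes\code)\tau_{S,S,S}^{-1}(\decode\otimes 1_S)\decode$ and cancel the two outer mutually-inverse pairs (both of the form $\decode\code=1_{S\otimes S}$), the two sides of the condition are literally the same composite, and the typing check is immediate: $(\decode\otimes 1_S)\colon S\otimes S\to (S\otimes S)\otimes S$, then $\tau_{S,S,S}^{-1}\colon (S\otimes S)\otimes S\to S\otimes(S\otimes S)$, then $(1_S\otimes\code)\colon S\otimes(S\otimes S)\to S\otimes S$. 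No naturality square for $\tau$ against $\decode$, no pentagon cell, and no appeal to the coherence machinery of Appendix \ref{simplecoherence} is needed anywhere in this theorem; that machinery only becomes relevant when one wants to interpret these lax identities as coherent in the mixed $\{\otimes,\otimes_\dc\}$ setting, not to prove them. Finally, a small typing slip worth fixing: in your lax-associativity telescoping the cancelling pair is $\decode\code=1_{S\otimes S}$ (the composite $S\otimes S\to S\to S\otimes S$), not $1_S$; it is $\code\decode$ that equals $1_S$.
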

\begin{proof} The following proof is based on results of \cite{PH13}. 

Conditions 1. and 2. above follow from the commutativity of the following diagram
\[ 
\xymatrix{
S \ar[r]^\decode \ar[d]_{\tau_\dc} & S \otimes S \ar[rr]^{1_S\otimes \decode} && S\otimes (S\otimes S) \ar[d]^{\tau_{S,S,S}} \\
S						   & S\otimes S \ar[l]^\code				    && (S\otimes S)\otimes S \ar[ll]^{\code \otimes 1_S}
}
\]
which is simply the definition of the induced associativity isomorphism. Condition 3. follows immediately from the definition of a $\dagger$ self-similar structure, and condition 4. is simply the definition of the induced symmetry isomorphism.

\end{proof}

\begin{remark}For the above properties to be taken seriously as lax versions of the axioms for an A-H classical structure, there needs to be some notion of coherence relating the semi-monoidal tensor $\_ \otimes \_ :\C\times \C\rightarrow \C$ and its canonical isomorphisms, to the semi-monoidal tensor $\_ \otimes_\dc \_ : \C(S,S)\times \C(S,S)\rightarrow \C(S,S)$ and its canonical isomorphisms. A general theory of coherence for self-similarity and associativity is given in \cite{PHarxiv}; in Appendix \ref{simplecoherence}, we outline how a simple case of this is also applicable in the $\dagger$ symmetric case.
\end{remark}

It may be wondered whether the induced isomorphisms are necessary in theorem \ref{AHssFull} above -- can we not have a $\dagger$ self-similar structure satisfying analogous conditions solely based on the canonical isomorphisms of $(\C,\otimes)$? The following corollary demonstrates that this can only be the case when $S$ is degenerate  --- i.e. the unit object for some monoidal category.
\begin{corollary}\label{notypedassoc} Let $(S,\code,\decode)$ be a self-similar structure of a semi-monoidal category $(\C,\otimes , \tau_{\_,\_,\_})$. Then the following condition
\begin{itemize}
\item {\bf (Overly restrictive Frobenius condition)} $\decode  \code  =  (1_S\otimes \code) \tau^{-1}_{S,S,S} (\decode\otimes 1_S)$
\end{itemize}
implies that $S$ is degenerate -- i.e. the unit object for some monoidal category.
\end{corollary}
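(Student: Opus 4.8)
The plan is to show that the displayed condition forces the induced associativity isomorphism $\tau_\dc$ of Theorem \ref{standard} to be an identity arrow, and then to quote Remark \ref{notstrict}, according to which a single-object semi-monoidal category can be strictly associative only when its object is a unit object.

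First I would rewrite the hypothesis using $\decode\code = 1_{S\otimes S}$, so that the overly restrictive Frobenius condition becomes
\[ 1_{S\otimes S} \ = \ (1_S\otimes \code)\,\tau^{-1}_{S,S,S}\,(\decode\otimes 1_S) . \]
Next I would recall that the commuting diagram in the proof of Theorem \ref{AHssFull} is nothing more than the definition of $\tau_\dc$ from Theorem \ref{standard}, and hence that its consequence, the \emph{lax Frobenius condition}
\[ \decode\,\tau_\dc^{-1}\,\code \ = \ (1_S\otimes \code)\,\tau^{-1}_{S,S,S}\,(\decode\otimes 1_S) , \]
holds for any self-similar structure of any semi-monoidal category (the $\dagger$ and symmetry hypotheses of Theorem \ref{AHssFull} are not used for that part of its proof; alternatively one derives this identity directly from $\tau_\dc = \code(\code\otimes 1_S)\tau_{S,S,S}(1_S\otimes\decode)\decode$ by inverting, using that $\otimes$ preserves inverses). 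Comparing the two displays gives $\decode\,\tau_\dc^{-1}\,\code = 1_{S\otimes S}$; composing on the left with $\code$ and on the right with $\decode$ and using $\code\decode = 1_S$ then yields $\tau_\dc^{-1} = 1_S$, i.e. $\tau_\dc = 1_S$.

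Finally I would apply Remark \ref{notstrict} to the internalisation $(\C(S,S),\otimes_\dc,\tau_\dc)$: this is a single-object semi-monoidal category which we have just shown to be strictly associative, so by the results cited there (\cite{PHarxiv}, see also Appendix B of \cite{PH13}) $S$ must be a unit object for $\otimes_\dc$, i.e. $S$ is degenerate. I do not expect any serious obstacle: the only points needing care are the invertibility bookkeeping in the middle step (that $\tau_{S,S,S}$, $\code$, $\decode$ are all isomorphisms and that the tensor functor preserves inverses) and the observation that the lax Frobenius identity used there does not in fact require the extra structure assumed in Theorem \ref{AHssFull}.
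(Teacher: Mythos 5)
Your proof is correct and follows essentially the same route as the paper: both arguments show that the stated condition forces the induced associativity isomorphism $\tau_\dc$ (equivalently $\tau_\dc^{-1}$, which is exactly the composite $\code(1_S\otimes\code)\tau^{-1}_{S,S,S}(\decode\otimes 1_S)\decode$) to equal $1_S$, and then invoke the cited result that a strictly associative single-object semi-monoidal category has its object as a unit object. Your extra care about the invertibility bookkeeping and about the lax Frobenius identity not needing the $\dagger$/symmetry hypotheses is sound, but it is the same computation the paper performs implicitly.
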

\begin{proof}
By definition, the associativity isomorphism for the internalisation of $(\ \otimes \ )$ is given by 
\[ \tau_\dc = \code (1_S\otimes \code) \tau^{-1}_{S,S,S} (\decode\otimes 1_S) \decode \]
Thus as $\code$ and $\decode$ are mutually inverse unitaries, the overly restrictive Frobenius condition implies that $\tau_\dc=1_S$. However, as proved in \cite{PHarxiv} (see also Appendix B of \cite{PH13}), single-object semi-monoidal categories are strictly associative exactly when their unique object is a unit object of some monoidal category. 
\end{proof}

An alternative perspective of Corollary \ref{notypedassoc} is the following:
\begin{corollary}\label{nodoublecc}
Let $(S,\Delta,\nabla)$ be an A-H classical structure satisfying the precise axioms\footnote{We strongly emphasise that this corollary does not hold if we allow the axioms of Definition \ref{AHFA} to hold up to canonical isomorphism, as demonstrated in Theorems \ref{AHssFull} and \ref{AHssStrict}.}  of Definition \ref{AHFA}. Unitarity of $\Delta$ implies that $S$ is the unit object of a monoidal category.
\end{corollary}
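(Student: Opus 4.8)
The plan is to recognise that, once $\Delta$ is assumed unitary, the data of an A-H classical structure in the strict sense of Definition \ref{AHFA} is nothing but a $\dagger$ self-similar structure whose Frobenius axiom is precisely the ``overly restrictive'' Frobenius condition of Corollary \ref{notypedassoc}; the statement then follows immediately from that corollary.

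Concretely, I would first unpack the hypothesis. Since $\nabla = \Delta^\dagger$ holds by definition of an A-H $\dagger$ Frobenius algebra, unitarity of $\Delta$ means $\nabla = \Delta^{-1}$, so that $\nabla\Delta = 1_S$ and $\Delta\nabla = 1_{S\otimes S}$. (In particular the classical structure condition is automatic here, and neither the associativity axiom nor the commutativity axiom of Definition \ref{AHFA} will be used.) Writing $\code := \nabla\in\C(S\otimes S,S)$ and $\decode := \Delta\in\C(S,S\otimes S)$, this exhibits $(S,\code,\decode)$ as a $\dagger$ self-similar structure of $(\C,\otimes)$ in the sense of Definition \ref{basicselfsim}.

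Next I would rewrite the Frobenius axiom. Definition \ref{AHFA} is set in a \emph{strictly associative} semi-monoidal category, so $\tau_{S,S,S} = 1_{S\otimes S\otimes S} = \tau^{-1}_{S,S,S}$, and axiom~2 of Definition \ref{AHFA}, namely $\Delta\nabla = (1_S\otimes\nabla)(\Delta\otimes 1_S)$, becomes under the above renaming
\[ \decode\,\code \;=\; (1_S\otimes\code)\,\tau^{-1}_{S,S,S}\,(\decode\otimes 1_S), \]
which is exactly the overly restrictive Frobenius condition appearing in Corollary \ref{notypedassoc}. Applying that corollary to the $\dagger$ self-similar structure $(S,\code,\decode)$ then yields that $S$ is degenerate, i.e. the unit object of a monoidal category, as required.

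I do not anticipate a genuine obstacle: the content is entirely in lining up the hypotheses. The one point that needs care is to note that it is the strict-associativity assumption of Definition \ref{AHFA} that collapses the associators, so that the A-H Frobenius identity becomes literally equal to the overly restrictive condition, and that it is \emph{unitarity} of $\Delta$ (strictly stronger than the classical structure condition alone) that is needed to obtain an honest $\dagger$ self-similar structure, and hence to be entitled to invoke Theorem \ref{standard} and Corollary \ref{notypedassoc} at all.
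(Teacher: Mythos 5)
Your proposal is correct and is essentially the paper's own argument: the paper offers no separate proof of Corollary \ref{nodoublecc}, presenting it explicitly as ``an alternative perspective'' on Corollary \ref{notypedassoc}, which is precisely the reduction you carry out (unitarity of $\Delta$ with $\nabla=\Delta^\dagger$ gives a self-similar structure $\code=\nabla$, $\decode=\Delta$, and strict associativity turns the A-H Frobenius axiom into the overly restrictive Frobenius condition). Your closing remarks on which hypotheses are actually used are accurate and consistent with the paper's intent.
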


Despite Corollaries \ref{notypedassoc} and \ref{nodoublecc} above, it is certainly possible for a self-similar structure to satisfy all the axioms for a Frobenius algebra up to a single associativity isomorphism; however, this must be the induced associativity isomorphism of Definition \ref{internal}, as we now demonstrate:

\begin{theorem}\label{AHssStrict}
Let $(S,\code,\decode)$ be a $\dagger$ self-similar structure of a strictly associative $\dagger$ semi-monoidal category $(\C,\otimes , (\ )^\dagger)$. Then the defining conditions of an A.-H. $\dagger$ Frobenius algebra are satisfied up to a single associativity isomorphism as follows:
\begin{itemize}
\item $ \code (\code \otimes 1_S) = \tau_\dc \code (1_s \otimes \code)$
\item $(\decode\otimes 1_S)(1_S\otimes \decode) = \decode \tau_\dc^{-1} \code$
\end{itemize}
\end{theorem}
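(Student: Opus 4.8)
The plan is to reduce both claimed identities to instances of the defining diagram for $\tau_\dc$ together with the unitarity relations $\code\decode = 1_S$ and $\decode\code = 1_{S\otimes S}$, exactly as in the proof of Theorem \ref{AHssFull}, but now exploiting strict associativity of $(\C,\otimes)$ to delete the occurrences of $\tau_{S,S,S}$. Since $\tau_{S,S,S} = 1$ in the strict setting, the induced isomorphism collapses to $\tau_\dc = \code(\code\otimes 1_S)(1_S\otimes\decode)\decode$, and dually $\tau_\dc^{-1} = \decode(1_S\otimes\code)(\decode\otimes 1_S)\code$. The task is then purely a matter of composing with $\code$ or $\decode$ on the appropriate side and cancelling.

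First I would prove the first bullet. Starting from $\tau_\dc = \code(\code\otimes 1_S)(1_S\otimes\decode)\decode$, postcompose with $\code(1_S\otimes\code)$ on the right: this gives $\tau_\dc\,\code(1_S\otimes\code) = \code(\code\otimes 1_S)(1_S\otimes\decode)\decode\,\code(1_S\otimes\code)$. Now $\decode\code = 1_{S\otimes S}$, and since tensoring is functorial, $(1_S\otimes\decode)(1_S\otimes\code) = 1_S\otimes(\decode\code) = 1_{S\otimes(S\otimes S)}$; in the strictly associative category the middle stretch $(1_S\otimes\decode)\decode\code(1_S\otimes\code)$ simplifies, after inserting $\decode\code=1$, to $(1_S\otimes\decode)(1_S\otimes\code) = 1$. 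What remains is $\code(\code\otimes 1_S)$, which is exactly the right-hand side of the first identity. I would present this as the commuting rectangle
\[
\xymatrix{
S\otimes S\otimes S \ar[r]^{1_S\otimes\code} \ar[d]_{\code\otimes 1_S} & S\otimes S \ar[r]^\code & S \ar[d]^{\tau_\dc} \\
S\otimes S \ar[rr]_\code & & S
}
\]
whose commutativity is the strict-associativity specialisation of the diagram appearing in the proof of Theorem \ref{AHssFull}.

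For the second bullet I would dualise: apply $(\ )^\dagger$ to the first identity. Since $(\ )^\dagger$ is a contravariant semi-monoidal functor with $\code^\dagger = \decode$ and $\tau_\dc$ unitary, $\bigl(\code(\code\otimes 1_S)\bigr)^\dagger = (\decode\otimes 1_S)\decode$ while $\bigl(\tau_\dc\code(1_S\otimes\code)\bigr)^\dagger = (1_S\otimes\decode)\decode\,\tau_\dc^\dagger = (1_S\otimes\decode)\decode\,\tau_\dc^{-1}$. This is almost the claimed equation but with the two nested $\decode$'s on the wrong side; I would then rearrange using strict associativity and the identity $\decode\,\tau_\dc^{-1}\code = (\decode\otimes 1_S)(1_S\otimes\decode)$ obtained by precomposing the formula for $\tau_\dc^{-1}$ with $\decode$ on the left and $\code$ on the right and cancelling $\code\decode = 1$ twice — the mirror image of the computation above. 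Alternatively, and perhaps more cleanly, I would prove the second identity directly from $\tau_\dc^{-1} = \decode(1_S\otimes\code)(\decode\otimes 1_S)\code$ by precomposing with $\decode$ on the left and postcomposing with $\code$ on the right, then cancelling $\decode\code = 1$ and $\code\decode = 1$.

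The only genuinely delicate point — and the step I expect to be the main obstacle — is bookkeeping the parenthesisation: because $(\C,\otimes)$ is strictly associative we are entitled to write $S\otimes S\otimes S$ without brackets and to treat $(1_S\otimes\decode)$ and $(\decode\otimes 1_S)$ as arrows into the same object, but one must be careful that the \emph{naturality} squares being invoked are genuinely strict (cf. the footnote to Definition \ref{semicat}: equality of objects does not by itself license strictness). Since strict associativity is a standing hypothesis here, these squares do commute on the nose, so the cancellations go through; but I would state explicitly which instance of interchange / functoriality of $\otimes$ is used at each cancellation, so that the reader sees the argument is not secretly assuming strictness of the internal tensor $\otimes_\dc$ — which, by Remark \ref{notstrict}, is false unless $S$ is degenerate.
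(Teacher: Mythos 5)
Your route is essentially the paper's own: the paper proves Theorem \ref{AHssStrict} simply by specialising Theorem \ref{AHssFull} to the case $\tau_{S,S,S}=1$, and your cancellation argument for the first identity is exactly the computation underlying that specialisation (postcompose the strict form $\tau_\dc=\code(\code\otimes 1_S)(1_S\otimes\decode)\decode$ with $\code(1_S\otimes\code)$ and cancel $\decode\code=1_{S\otimes S}$). That half is correct as written.

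Your treatment of the second identity, however, contains two slips that must be repaired before the argument goes through. First, inverting $\tau_\dc=\code(\code\otimes 1_S)(1_S\otimes\decode)\decode$ gives $\tau_\dc^{-1}=\code(1_S\otimes\code)(\decode\otimes 1_S)\decode$, not $\decode(1_S\otimes\code)(\decode\otimes 1_S)\code$; your expression has the outer $\code$ and $\decode$ transposed, does not typecheck, and with it the advertised sandwich-and-cancel step cannot be carried out. With the correct formula, forming $\decode\,\tau_\dc^{-1}\,\code$ and cancelling $\decode\code=1_{S\otimes S}$ at both ends yields $\decode\,\tau_\dc^{-1}\,\code=(1_S\otimes\code)(\decode\otimes 1_S)$, which is precisely the strict specialisation of the lax Frobenius condition of Theorem \ref{AHssFull}. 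Second, the expression $(\decode\otimes 1_S)(1_S\otimes\decode)$ that you claim to reach (copying the printed statement) is not even composable --- both factors have source $S\otimes S$ and target $S\otimes S\otimes S$ --- so it cannot be what your cancellation produces; the left-hand side of the second bullet should be read as $(1_S\otimes\code)(\decode\otimes 1_S)$, the printed form being a typo. Your alternative dagger argument is fine in outline: applying $(\ )^\dagger$ to the first identity gives $(\decode\otimes 1_S)\decode=(1_S\otimes\decode)\decode\,\tau_\dc^{-1}$, and precomposing the rearranged form with $\code$ lands on the same corrected identity. With these two corrections your proof coincides with the paper's argument.
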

\begin{proof}
This is simply the result of Theorem \ref{AHssFull} in the special case where the monoidal tensor $\_ \otimes \_ :\C\times \C\rightarrow \C$ is strictly associative. Note that even though $\_ \otimes \_ :\C\times \C \rightarrow \C$ is  strictly associative, 
its internalisation $\otimes_\dc:\C(S,S)\times \C(S,S)\rightarrow \C(S,S)$ cannot be strictly associative; rather, from Theorem \ref{standard}  the required associativity isomorphism is given by $\tau_\dc= \code (\code\otimes 1_S)(1_S\otimes \decode)\decode \neq 1_S$.
\end{proof}


\section{An illustrative example}
In the following sections, we will present the theory behind an example of a $\dagger$ self-similar structure that determines matrix representations of arrows in a similar manner to how classical structures in finite-dimensional Hilbert space determine matrix representations of linear maps. Our example is deliberately chosen to be as `non-quantum' as possible, in order to explore the limits of the interpretations of pure category theory: it comes from a setting where all isomorphisms are unitary, all idempotents commute, and the lattice of idempotents satisfies distributivity rather than some orthomodularity condition.
Many of these properties are determined by the particular form of dagger operation used; we will work with {\em inverse categories}.  

\section{Inverse categories as $\dagger$ categories}
Inverse categories arose from the algebraic theory of semigroups, but the extension to a categorical definition is straightforward and well-established. We also refer to \cite{RC} for the more general {\em restriction categories} that generalise inverse categories in the same way that restriction monoids generalise inverse monoids.
\begin{definition}\label{inv}{\em Inverse categories}\\
An {\bf inverse category} is a category $\C$ where every arrow $f\in \C(X,Y)$ has a unique {\bf generalised inverse} $f^\ddag\in \C(Y,X)$ satisfying $ff^\ddag f=f$ and $f^\ddag ff^\ddag =f^\ddag$.
A single-object inverse category is called an {\bf inverse monoid}. 
\end{definition}

\begin{remark}{\bf Uniqueness of generalised inverse operations}
Inverse monoids and semigroups were defined and studied long before inverse categories; the definition of an inverse category is thus rather `algebraic'  in nature, given by requiring the existence of unique arrows satisfying certain properties -- this is in contrast to a more functorial definition. 
However, uniqueness implies that there can be at most one (object-indexed)  operation $(\ )_{XY} : \C(X,Y)\rightarrow \C(Y,X)$ that takes each arrow to some generalised inverse satisfying the above axioms.  We will therefore treat $(\ )^\ddag$ as an (indexed) bijection of hom-sets, and ultimately (as we demonstrate in Theorem \ref{inversedagger} below) a contravariant functor.
\end{remark}

The following result is standard, and relates generalised inverses and idempotent structures of inverse monoids (see, for example \cite{MVL}). 
\begin{lemma}\label{invbasics}
Let $M, (\ )^\ddag$ be an inverse monoid. Then for all $a\in M$, the element $a^\ddag a$ is idempotent, and the set of idempotents $E_M$ of $M$ is a commutative submonoid of $M$ where every element is its own generalised inverse.
\end{lemma}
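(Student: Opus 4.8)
\textbf{Proof plan for Lemma \ref{invbasics}.}

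The plan is to work entirely within the single-object setting (an inverse monoid $M$ with its unique generalised-inverse operation $(\ )^\ddag$), using only the defining identities $aa^\ddag a = a$ and $a^\ddag a a^\ddag = a^\ddag$ together with the uniqueness clause. First I would check that $a^\ddag a$ is idempotent: compute $(a^\ddag a)(a^\ddag a) = a^\ddag (a a^\ddag a) = a^\ddag a$, using the first defining identity on the bracketed term. The same computation with the roles of $a$ and $a^\ddag$ exchanged shows $a a^\ddag$ is idempotent too, which will be convenient later. Next I would record that every idempotent $e$ is its own generalised inverse: since $eee = e$ and (the same equation again) $eee = e$, the element $e$ satisfies both axioms required of $e^\ddag$, so by uniqueness $e^\ddag = e$. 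In particular $E_M$ is closed under $(\ )^\ddag$ trivially.

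The substantive step is commutativity of idempotents, and this is where I expect the real work to lie. The standard argument runs as follows. Let $e, f \in E_M$ be idempotents and set $x = ef$. One shows $x^\ddag = fe$ by verifying the two axioms; this is the delicate calculation. For $x x^\ddag x = x$ one needs $ef \cdot fe \cdot ef = ef$, which is not immediate from the monoid axioms alone — the usual trick is first to prove that $fe$ is idempotent, or rather to prove the sharper statement that for idempotents the product $ef$ has an idempotent generalised inverse, and then to invoke uniqueness twice. Concretely: a classical lemma (Clifford--Preston) shows that in an inverse semigroup any product of idempotents is idempotent; I would prove this by considering $ef$, noting that $(ef)^\ddag$ exists and is unique, showing $(ef)^\ddag$ and $(ef)^\ddag(ef)^\ddag$ both serve as a generalised inverse of $ef$ wherever needed, and deducing $(ef)^\ddag = (ef)^\ddag(ef)^\ddag$, i.e. $(ef)^\ddag$ is idempotent; then since $(ef)^\ddag$ is idempotent it equals its own generalised inverse, so $ef = (ef)^{\ddag\ddag} = (ef)^\ddag$, giving that $ef$ is idempotent. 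With that in hand, $ef$ and $fe$ are both idempotent, both are their own generalised inverses, and a short check shows each is a generalised inverse of the other (e.g. $ef \cdot fe \cdot ef = ef$ follows once both are known idempotent and one expands carefully using $e^2=e$, $f^2=f$); uniqueness of generalised inverses then forces $ef = fe$.

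Finally, closure of $E_M$ under multiplication is exactly the statement ``product of idempotents is idempotent'' proved above, and $1 \in E_M$ since $1$ is idempotent, so $E_M$ is a submonoid; together with $e^\ddag = e$ for all $e \in E_M$ and the commutativity just established, this is the full claim. The main obstacle, as indicated, is the purely equational manipulation showing that a product of idempotents is idempotent: it genuinely uses the uniqueness of generalised inverses and cannot be derived from $aa^\ddag a = a$ and $a^\ddag a a^\ddag = a^\ddag$ by naive rewriting, so care is needed in setting up which element plays the role of $(\ )^\ddag$ at each stage. Since the lemma is flagged as standard, I would present the idempotent-product argument compactly and cite \cite{MVL} for the details rather than grinding through every intermediate identity.
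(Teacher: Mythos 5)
Your plan is correct in substance and in fact supplies more than the paper does: the paper's own proof of Lemma \ref{invbasics} contains no calculation at all, merely declaring these to be standard facts of inverse semigroup theory and citing \cite{MP} for the equivalence between uniqueness of generalised inverses and commutativity of idempotents. Your computations for the idempotency of $a^\ddag a$, for $e^\ddag=e$ whenever $e^2=e$, and for the final step (once $ef$ and $fe$ are both known idempotent, each is a generalised inverse of the other, while $ef$ is its own generalised inverse, so uniqueness forces $ef=fe$) are exactly the classical Clifford--Preston route the paper is implicitly invoking, so the two approaches differ only in that you actually carry the argument out. The one imprecise point is your sketch of the crux, that a product of idempotents is idempotent: as stated, one cannot verify directly that $(ef)^\ddag(ef)^\ddag$ "serves as a generalised inverse of $ef$" from the two defining identities. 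The standard trick is a conjugation rather than a squaring: set $x=(ef)^\ddag$ and check that $fxe$ satisfies both axioms for a generalised inverse of $ef$, since $(ef)(fxe)(ef)=(ef)x(ef)=ef$ and $(fxe)(ef)(fxe)=f\bigl(x(ef)x\bigr)e=fxe$ using $e^2=e$ and $f^2=f$; uniqueness then gives $x=fxe$, whence $x^2=fx(ef)xe=fxe=x$, so $x$ is idempotent; finally $ef$ is a generalised inverse of $x$ while the idempotent $x$ is its own, so uniqueness gives $ef=x$, i.e.\ $ef$ is idempotent. With that substitution your outline closes completely, and presenting it compactly or citing \cite{MVL} as you propose is entirely consistent with the level of detail the paper itself adopts.
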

\begin{proof}
These are standard results of inverse semigroup theory, relying heavily on the {\em uniqueness} of generalised inverses.  The key equivalence between commutativity of idempotents and uniqueness of generalised inverses is due to \cite{MP}.
\end{proof}

Based on the above, the following is folklore:
\begin{theorem}
Let $\C,(\ )^\ddag$ be an inverse category. Then the operation $(\ )^\ddag$ is a dagger operation, and all isomorphisms of $\C$ are unitary.
\end{theorem}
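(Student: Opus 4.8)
The plan is to verify the two defining axioms of a dagger functor --- namely $(1_A)^\ddag = 1_A$ and $(f^\ddag)^\ddag = f$ --- together with contravariant functoriality $(gf)^\ddag = f^\ddag g^\ddag$, and then to show that every isomorphism $f$ satisfies $f^{-1} = f^\ddag$. The involution property $(f^\ddag)^\ddag = f$ is immediate: the axioms $f f^\ddag f = f$ and $f^\ddag f f^\ddag = f^\ddag$ are symmetric in $f$ and $f^\ddag$, so $f$ is a generalised inverse of $f^\ddag$, and uniqueness forces $(f^\ddag)^\ddag = f$. Likewise $1_A$ satisfies $1_A 1_A 1_A = 1_A$, so by uniqueness $(1_A)^\ddag = 1_A$. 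The identity-on-objects requirement holds by construction, since $f^\ddag$ always goes from the codomain of $f$ back to its domain.

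First I would establish contravariant functoriality. Given composable $f \in \C(X,Y)$ and $g \in \C(Y,Z)$, I would show that $f^\ddag g^\ddag$ is a generalised inverse of $gf$, i.e. that $(gf)(f^\ddag g^\ddag)(gf) = gf$ and $(f^\ddag g^\ddag)(gf)(f^\ddag g^\ddag) = f^\ddag g^\ddag$, and then invoke uniqueness to conclude $(gf)^\ddag = f^\ddag g^\ddag$. This is the step I expect to be the main obstacle: unlike in a single inverse monoid, the relevant idempotents $f^\ddag f$, $g g^\ddag$, $f f^\ddag$, $g^\ddag g$ live in different hom-sets, so Lemma \ref{invbasics} (commutativity of idempotents) does not apply verbatim. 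The standard fix is to apply Lemma \ref{invbasics} inside an endomorphism monoid: for instance, the idempotents $f^\ddag f$ and $f^\ddag g^\ddag g f$ both lie in $\C(X,X)$, which is an inverse monoid with the restricted generalised-inverse operation, so they commute there; similarly one works in $\C(Y,Y)$ with $f f^\ddag$ and $g^\ddag g$. One must first check that the restriction of $(\ )^\ddag$ to each endomorphism monoid really is \emph{the} inverse-monoid operation on that monoid (again by uniqueness), after which the classical inverse-semigroup manipulations --- inserting $f^\ddag f$ and $g^\ddag g$, commuting idempotents past one another, and collapsing using $ff^\ddag f = f$, $gg^\ddag g = g$ --- go through. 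This gives both the pentagon-style verification for $(gf)^\ddag = f^\ddag g^\ddag$ and, combined with $(1_A)^\ddag = 1_A$ and the involution property, shows $(\ )^\ddag$ is a dagger.

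Finally, for the unitarity claim, suppose $f \in \C(X,Y)$ is an isomorphism with two-sided inverse $f^{-1}$. Then $f f^{-1} f = f$ and $f^{-1} f f^{-1} = f^{-1}$, so $f^{-1}$ is \emph{a} generalised inverse of $f$; by the uniqueness in Definition \ref{inv}, $f^{-1} = f^\ddag$. Hence $f^{-1} = f^\ddag$, which is exactly the definition of $f$ being unitary with respect to the dagger $(\ )^\ddag$. I would present the argument in this order --- involution and identities first (cheap), then functoriality via the endomorphism-monoid trick (the substantive step), then unitarity (an immediate corollary of uniqueness) --- so that the dagger structure is in place before it is used to phrase the unitarity statement.
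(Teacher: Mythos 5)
Your proposal is correct and follows essentially the same route as the paper: uniqueness of generalised inverses handles the involution, identities and unitarity of isomorphisms, while contravariant functoriality is obtained by showing $f^\ddag g^\ddag$ is a generalised inverse of $gf$ using commutativity of idempotents in the endomorphism monoid (the paper commutes $ff^\ddag$ and $g^\ddag g$ in $\C(Y,Y)$, exactly as you suggest). If anything you are slightly more thorough, since you explicitly check the second generalised-inverse axiom and note that the restriction of $(\ )^\ddag$ to each endomorphism monoid is the inverse-monoid operation, points the paper leaves implicit in its outline.
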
\label{inversedagger}
\begin{proof}{\em The technical results we require are straightforward generalisations of well-established inverse semigroup theory, so are simply given in outline.} \\
First observe that it is implicit from the definition that, on objects $X^\ddag=X\in Ob(\C)$.  
We now prove that $(\ )^\ddag$, with this straightforward extension to objects, is a contravariant involution. 

To demonstrate contravariant functoriality, observe that $(gf) (gf)^\ddag gf = gf$ for all $f\in \C(X,Y)$ and $g\in \C(Y,Z)$. 
However, $ff^{\ddag}$ and $g^{\ddag}g$ are both idempotents of $Y$, and thus commute. Hence $(gf)f^{\ddag}g^{\ddag} (gf) = gg^{\ddag}g ff^\ddag f = gf$ and so $(gf)^\ddag=f^\ddag g^\ddag$ as required. 

To see that $(\ )^\ddag$ is involutive, note that by definition $f^{\ddag}\left( f^{\ddag} \right)^\ddag f^\ddag = f^\ddag$, for all $f\in \C(X,Y)$. However, also from the definition, $f^\ddag f f^\ddag =f^\ddag$ and again by uniqueness, $\left(f^\ddag\right)^\ddag = f$. 

Thus $(\ )^\ddag$ is a contravariant involution that acts trivially on objects. To see that all isomorphisms are unitary, consider an arbitrary isomorphism $u\in \C(X,Y)$. Then trivially, $uu^{-1}u=u\in \C(X,Y)$ and $u^{-1}uu^{-1}=u^{-1}\in \C(Y,X)$. Uniqueness of generalised inverses then implies that $u^{-1}=u^\ddag$, and hence $u$ is unitary.
\end{proof}

\begin{corollary}
Let $\C$ be an inverse category with a semi-monoidal tensor $(\ \otimes \ )$. Then $(\C,\otimes ,(\ )^\ddag)$ is a dagger semi-monoidal category.
\end{corollary}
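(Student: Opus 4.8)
The plan is to verify the three defining clauses of a $\dagger$ semi-monoidal category (Definition \ref{semicat}) directly, using the two results just established: the preceding Theorem, which tells us $(\ )^\ddag$ is a dagger operation on $\C$ and that every isomorphism of $\C$ is unitary. First I would note that $\C$ being an inverse category means it is in particular a category, so the semi-monoidal tensor $(\ \otimes\ ):\C\times\C\to\C$ together with its associativity isomorphisms $\tau_{A,B,C}$ (and, if present, symmetry isomorphisms) already satisfies the pentagon (and hexagon) conditions by hypothesis. Thus all that is at stake is the compatibility of the dagger with the tensor: namely that $(\ )^\ddag$ is a semi-monoidal functor, i.e. $(f\otimes g)^\ddag = f^\ddag\otimes g^\ddag$, and that the canonical isomorphisms $\tau_{A,B,C}$ (and $\sigma_{X,Y}$) are unitary.

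The second of these is immediate: each $\tau_{A,B,C}$ is by definition an isomorphism in $\C$, and by the preceding Theorem every isomorphism in an inverse category is unitary; the same applies to $\sigma_{X,Y}$ in the symmetric case. So the one genuine verification is that $(\ )^\ddag$ commutes with $\otimes$. Here the key observation is uniqueness of generalised inverses: to show $(f\otimes g)^\ddag = f^\ddag\otimes g^\ddag$ it suffices to check that $f^\ddag\otimes g^\ddag$ satisfies the two defining equations of a generalised inverse of $f\otimes g$, and then appeal to uniqueness. Using functoriality of $\otimes$ we compute $(f\otimes g)(f^\ddag\otimes g^\ddag)(f\otimes g) = (f f^\ddag f)\otimes(g g^\ddag g) = f\otimes g$, and symmetrically $(f^\ddag\otimes g^\ddag)(f\otimes g)(f^\ddag\otimes g^\ddag) = (f^\ddag f f^\ddag)\otimes(g^\ddag g g^\ddag) = f^\ddag\otimes g^\ddag$. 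Uniqueness then forces $(f\otimes g)^\ddag = f^\ddag\otimes g^\ddag$, which is exactly strict semi-monoidality of the functor $(\ )^\ddag$.

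I expect no serious obstacle here — the corollary is essentially a bookkeeping consequence of the previous Theorem plus the bifunctoriality of $\otimes$. The only point requiring the slightest care is that in the symmetric case one must also remember to check $\sigma_{X,Y}$ is unitary, but this is again covered by the blanket statement that all isomorphisms of an inverse category are unitary. Assembling these observations — tensor and canonical isomorphisms inherited from the semi-monoidal structure, dagger inherited from the inverse structure, compatibility from uniqueness of generalised inverses — yields that $(\C,\otimes,(\ )^\ddag)$ is a $\dagger$ semi-monoidal category, and symmetric whenever $(\C,\otimes)$ is.
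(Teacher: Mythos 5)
Your proposal is correct and follows essentially the same route as the paper: show $(f\otimes g)^\ddag = f^\ddag\otimes g^\ddag$ by checking that $f^\ddag\otimes g^\ddag$ satisfies the generalised-inverse equations (via bifunctoriality of $\otimes$) and appealing to uniqueness, then note that the canonical isomorphisms are unitary because all isomorphisms in an inverse category are. If anything, your version is slightly more complete than the paper's, which writes out only one of the two defining equations before invoking uniqueness.
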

\begin{proof} Given arbitrary $f\in \C(A,B)$ and $g\in \C(X,Y)$, then by functoriality 
\[ (f\otimes g)(f^\ddag \otimes g^\ddag) (f\otimes g) = (ff^\ddag f \otimes gg^\ddag g) = (f\otimes g) \]
However, by definition
\[ (f\otimes g)(f \otimes g)^\ddag (f\otimes g) = (ff^\ddag f \otimes gg^\ddag g) = (f\otimes g) \]
and by uniqueness, $(f\otimes g)^\ddag= f^\ddag \otimes g^\ddag$.
Also, since all isomorphisms are unitary, all canonical isomorphisms are unitary.
\end{proof}

\subsection{The natural partial order on hom-sets}
All inverse categories have a naturally defined partial order on their hom-sets:
\begin{definition} Let $\mathcal C, (\ )^{\ddag}$ be an inverse category. For all $A,B\in Ob({\mathcal C})$, the relation $\unlhd_{A,B}$ is defined on ${\mathcal C}(A,B)$, as follows:
\[ f\unlhd_{A,B} g \ \mbox{ iff } \ \exists \ e^2=e\in {\mathcal C}(A,A) \ s.t. \ f=ge \]
It is immediate that, for all $A,B\in Ob({\mathcal C})$, the relation $\unlhd_{A,B}$ is a partial order on ${\mathcal C}(A,B)$, called the {\bf natural partial order}.\\

\noindent{\bf Convention:} When it is clear from the context, we omit the subscript on $\unlhd$.
\end{definition}

We may rewrite the above non-constructive definition more concretely:
\begin{lemma}
Given $f\unlhd g\in {\mathcal C}(X,Y)$, in some inverse category, then $f=gf^{\ddag}f$.
\end{lemma}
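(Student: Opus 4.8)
The plan is to unwind the definition of $\unlhd$ and then lean on the two structural facts already in hand: contravariant functoriality of $(\ )^\ddag$ (Theorem \ref{inversedagger}), and the commutativity of idempotents sharing a common object (Lemma \ref{invbasics}, applied to the endomorphism inverse monoid $\C(X,X)$). The whole argument is a transcription of the standard inverse-semigroup fact that the natural partial order can be described via $f = g f^\ddag f$, carried over to the categorical setting.

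First I would fix, by definition of $f \unlhd g$, an idempotent $e = e^2 \in \C(X,X)$ with $f = ge$. The target $f = g f^\ddag f$ will follow once $f^\ddag f$ is rewritten into a shape that collapses against $g$ via the defining identity $g g^\ddag g = g$.

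Next I would compute $f^\ddag$. Since $e$ is idempotent it satisfies $e\, e\, e = e$ and $e\, e\, e = e$, so by uniqueness of generalised inverses $e^\ddag = e$; together with contravariant functoriality this gives $f^\ddag = (ge)^\ddag = e^\ddag g^\ddag = e g^\ddag$, whence $f^\ddag f = e\, g^\ddag g\, e$. Now $g^\ddag g$ and $e$ are both idempotents of $\C(X,X)$ — the former by Lemma \ref{invbasics} — hence they commute, so $e\, g^\ddag g\, e = g^\ddag g\, e\, e = g^\ddag g\, e$. Finally $g f^\ddag f = g\, g^\ddag g\, e = g e = f$.

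No step here is a genuine obstacle; the one point that needs care is that commutativity of idempotents is a statement about idempotents \emph{at a single object}, so it must be invoked for $e$ and $g^\ddag g$ viewed as elements of the inverse monoid $\C(X,X)$ — exactly the manoeuvre used in the proof of Theorem \ref{inversedagger}. Everything else is the inverse-category axioms plus functoriality of the dagger.
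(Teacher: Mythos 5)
Your argument is correct: every step type-checks, $e^\ddag=e$ does follow from uniqueness of generalised inverses, and the appeal to commutativity of idempotents is correctly localised to the inverse monoid $\C(X,X)$. It is, however, a slightly different computation from the one in the paper. The paper never computes $f^\ddag$ at all: it starts from $f=ff^\ddag f$, substitutes $f=ge$ in the leftmost factor to get $f=gef^\ddag f$, commutes the idempotents $e$ and $f^\ddag f$ in $\C(X,X)$, and then absorbs $e$ using $fe=f$. You instead expand $f^\ddag=(ge)^\ddag=e g^\ddag$ via contravariant functoriality of $(\ )^\ddag$ (Theorem \ref{inversedagger}) together with $e^\ddag=e$, commute $e$ with $g^\ddag g$, and collapse with $gg^\ddag g=g$. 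Both routes rest on the same two pillars (idempotent commutativity from Lemma \ref{invbasics} and the defining equations of generalised inverses); the paper's version is marginally leaner in that it needs neither the anti-homomorphism property of the dagger nor the self-inverseness of idempotents, whereas yours makes the structure of $f^\ddag$ explicit, which is arguably more transparent and mirrors the classical semigroup-theoretic presentation more closely.
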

\begin{proof}
By definition, $f=ge$, for some $e^{2}=e\in {\mathcal C}(X,X)$. Thus $fe=f$, since $e$ is idempotent. From the defining equation for generalised inverses, 
$f=ff^{\ddag}f=gef^{\ddag}f$. As $f^{\ddag}f$ is idempotent, and idempotents commute, $f=gf^{\ddag}fe$. However, we have already seen that $fe=f$, and hence $f=gf^{\ddag}f$.\\

\end{proof} 

A very useful tool in dealing with the natural partial order is the following lemma, which is again a classic result of inverse semigroup theory rewritten in a categorical setting (see also \cite{GOI3} where it is rediscovered under the name `passing a message through a channel').

\begin{lemma}\label{passingidempotents} {\bf Pushing an idempotent through an arrow} Let $\C,(\ )^\ddag)$ be an inverse category. Then for all $f\in \C(X,Y)$, and $e^2=e\in \C(X,X)$, there exists an idempotent $e'^2=e'\in \C(Y,Y)$ satisfying $e'f=fe$.
\end{lemma}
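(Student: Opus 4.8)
The plan is to take the given idempotent $e^2 = e \in \C(X,X)$ and produce the required $e' \in \C(Y,Y)$ by conjugating $e$ with $f$ in the appropriate sense; the natural candidate is $e' = fef^\ddag$. The first thing I would check is that this really is an idempotent of $Y$. This is where the structure of an inverse category does the work: $e' e' = (fef^\ddag)(fef^\ddag) = f e (f^\ddag f) e f^\ddag$, and since $f^\ddag f$ is an idempotent of $X$ (Lemma \ref{invbasics}) and idempotents commute, this rearranges to $f e^2 (f^\ddag f) f^\ddag = f e (f^\ddag f f^\ddag) = f e f^\ddag = e'$, using $e^2 = e$ and the defining equation $f^\ddag f f^\ddag = f^\ddag$. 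So $e'$ is indeed idempotent.

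Next I would verify the intertwining relation $e'f = fe$. Expanding, $e'f = fef^\ddag f$. Again $f^\ddag f$ is idempotent, so it commutes with $e$, giving $fef^\ddag f = f (f^\ddag f) e = (f f^\ddag f) e = fe$, where the last step uses $ff^\ddag f = f$. This gives exactly $e'f = fe$ as required, so the two short computations together complete the proof.

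The only genuine subtlety — and hence the main thing to get right rather than a deep obstacle — is the repeated appeal to commutativity of idempotents, which is precisely the non-trivial feature of inverse categories guaranteed by Lemma \ref{invbasics} (equivalently, by uniqueness of generalised inverses via \cite{MP}). Everything else is the formal manipulation of $ff^\ddag f = f$ and $f^\ddag f f^\ddag = f^\ddag$. I would also remark, for the reader's orientation, that this is the categorical shadow of the standard inverse-semigroup fact that $f$ conjugates the idempotent $f^\ddag f$ to the idempotent $f f^\ddag$, and more generally conjugates the interval below $f^\ddag f$ order-isomorphically to the interval below $f f^\ddag$; the statement here is just the part of that picture that is needed downstream.
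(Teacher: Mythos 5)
Your proof is correct and follows essentially the same route as the paper: the paper also takes $e' = fef^\ddag$ and verifies idempotency and $e'f = fe$ by commuting $e$ with the idempotent $f^\ddag f$ and using $ff^\ddag f = f$, $f^\ddag f f^\ddag = f^\ddag$. The only difference is your added contextual remark about conjugation of idempotents, which is accurate but not needed for the argument.
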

\begin{proof} We define $e'=fef^\ddag\in \C(Y,Y)$.  By Lemma \ref{invbasics}, $e'^2=fef^\ddag fef^\ddag = ff^\ddag feef^\ddag$ $= fef^\ddag=e'$. 
Further, $e'f=fef^\ddag f = ff^\ddag f e=fe$, as required. 
\end{proof}

\begin{proposition}\label{cong} The natural partial order is a congruence --- that is, given $f\unlhd h \in {\mathcal C}(X,Y)$ and $g\unlhd k\in {\mathcal C}(Y,Z)$, then 
$gf\unlhd kh \in {\mathcal C}(X,Z)$.
\end{proposition}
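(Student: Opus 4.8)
The plan is to unwind the two hypotheses $f\unlhd h$ and $g\unlhd k$ into the concrete form supplied by the lemma just proved, namely $f = h f^{\ddag} f$ and $g = k g^{\ddag} g$, and then show that $gf = (kh)(gf)^{\ddag}(gf)$, which by definition of $\unlhd$ (witnessed by the idempotent $(gf)^{\ddag}(gf)\in\C(X,X)$) gives $gf\unlhd kh$. So the work is entirely in manipulating the composite $gf$.

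First I would write $gf = (k g^{\ddag} g)(h f^{\ddag} f)$. The obstacle is that the idempotent $g^{\ddag}g$ sits in the middle, attached to the wrong side of $h$; I need to move it past $h$ to collect all the ``defect'' idempotents together. This is exactly what Lemma \ref{passingidempotents} (pushing an idempotent through an arrow) is for: applied to $h\in\C(X,Y)$ and the idempotent $g^{\ddag}g\in\C(Y,Y)$, it produces an idempotent $e'\in\C(X,X)$ with $e' h\ \cdots$ — wait, the lemma in the stated direction gives, for $e\in\C(X,X)$, an $e'\in\C(Y,Y)$ with $e'h = he$; I want the reverse, pushing $g^{\ddag}g$ from the codomain of $h$ back to its domain, so I apply the lemma to $h^{\ddag}$, or equivalently use the symmetric form: there is an idempotent $e\in\C(X,X)$ with $g^{\ddag}g\, h = h\, e$. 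Then $gf = k\,(g^{\ddag}g\, h)\, f^{\ddag}f = k\, h\, e\, f^{\ddag}f = (kh)(e f^{\ddag}f)$, and since idempotents of an inverse category commute (Lemma \ref{invbasics}), $e f^{\ddag}f$ is again idempotent in $\C(X,X)$. Hence $gf = (kh)p$ for an idempotent $p\in\C(X,X)$, which is precisely $gf\unlhd kh$ by the definition of the natural partial order.

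I would present it in essentially that order: (i) rewrite each hypothesis concretely; (ii) substitute into $gf$; (iii) invoke Lemma \ref{passingidempotents} to commute $g^{\ddag}g$ through $h$, landing an idempotent $e$ in $\C(X,X)$; (iv) use commutativity of idempotents (Lemma \ref{invbasics}) to see $e f^{\ddag}f$ is idempotent; (v) conclude. The only genuinely delicate point is step (iii) — making sure the lemma is applied in the correct direction (codomain-to-domain), which is why I would state explicitly that it suffices to have \emph{some} idempotent $e\in\C(X,X)$ with $g^{\ddag}g\,h = h\,e$, obtainable by applying ``pushing an idempotent through an arrow'' to the arrow $h$ read backwards (i.e., to $h^{\ddag}$) and then using involutivity, or simply noting the lemma is symmetric in source and target. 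Everything else is a one-line associativity-and-idempotence computation, so I would not belabour it.
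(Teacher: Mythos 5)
Your proof is correct and follows essentially the same route as the paper: express $f$ and $g$ as $h$ and $k$ composed with idempotents, push the middle idempotent through $h$ using Lemma \ref{passingidempotents}, and use commutativity of idempotents to combine the witnesses. You are in fact more careful than the paper about the direction of the pushing lemma (codomain-to-domain via $h^\ddag$ and involutivity), a point the paper's own proof glosses over.
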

\begin{proof}
By definition, there exists idempotents $p^2=p\in {\mathcal C}(X,X)$ and $q^2=q\in {\mathcal C}(Y,Y)$ such that $f=hp\in {\mathcal C}(X,Y)$ and $g=kq\in {\mathcal C}(Y,Z)$, 
and hence $gf=hpkq\in {\mathcal C}(X,Z)$. We now use the `passing an idempotent through an arrow' technique of Lemma \ref{passingidempotents} to deduce the existence of an idempotent $p'\in {\mathcal C}(X,X)$ such that $pk=kp'\in {\mathcal C}(X,Y)$. Hence $gf=khp'q$. However, by Part 3. of Lemma \ref{passingidempotents}, $p'q$ is idempotent, and hence $gf \unlhd kh$, as required.\end{proof}

\begin{corollary}
Every locally small inverse category $(\C ,(\ )^\ddag)$ is enriched over the category $\bf Poset$ of partially ordered sets.
\end{corollary}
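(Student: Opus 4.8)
The plan is to read the result off from Proposition \ref{cong} together with the already-recorded fact that each $\unlhd_{A,B}$ is a partial order on $\C(A,B)$. Recall that a category enriched over the cartesian monoidal category $({\bf Poset},\times,\mathbf{1})$ consists of: a class of objects; a poset $\C(X,Y)$ for each ordered pair; composition maps $\C(Y,Z)\times\C(X,Y)\rightarrow\C(X,Z)$ and identity-selecting maps $\mathbf{1}\rightarrow\C(X,X)$ that are morphisms of posets (the domain of composition carrying the product order); and the usual associativity and unit diagrams, required to commute in ${\bf Poset}$. Local smallness of $\C$ is exactly what guarantees that each $\C(X,Y)$ is a genuine set, hence — once equipped with $\unlhd_{X,Y}$ — an object of ${\bf Poset}$.

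First I would observe that a map $\C(Y,Z)\times\C(X,Y)\rightarrow\C(X,Z)$ is monotone for the product order precisely when $g\unlhd k$ and $f\unlhd h$ together imply $gf\unlhd kh$; but this is verbatim the statement of Proposition \ref{cong}, so composition is a ${\bf Poset}$-morphism. The identity maps out of the one-point poset $\mathbf{1}$ are automatically monotone, there being nothing to check. Finally, the associativity and unit diagrams commute in ${\bf Poset}$ because they already commute in $\C$: the forgetful functor ${\bf Poset}\rightarrow\mathbf{Set}$ is faithful and reflects commuting diagrams, and applying it to the proposed enrichment returns the original category $\C$. Assembling these observations gives the enrichment.

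There is essentially no obstacle here beyond unwinding definitions; the substantive work is entirely contained in Proposition \ref{cong}, which itself rests on the `pushing an idempotent through an arrow' technique of Lemma \ref{passingidempotents}. The one point deserving a moment's care is matching the congruence hypothesis to the product order on $\C(Y,Z)\times\C(X,Y)$ — that is, recognising that joint monotonicity in the two arguments and separate monotonicity in each argument coincide for the product order, so that Proposition \ref{cong} supplies exactly the morphism condition that ${\bf Poset}$-enrichment demands.
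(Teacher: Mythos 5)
Your proposal is correct and follows the same route as the paper: the paper's proof simply observes that expanding the definition of $\bf Poset$-enrichment reduces everything to the congruence property of Proposition \ref{cong}, which is exactly the reduction you carry out in more detail. Your additional remarks on local smallness, the product order, and the trivially commuting associativity/unit diagrams are just the routine unwinding the paper leaves implicit.
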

\begin{proof}
Expanding out the definition of categorical enrichment will demonstrate that the crucial condition is that proved in Proposition \ref{cong} above.
\end{proof}

\subsection{A representation theorem for inverse categories}
A classic result of inverse semigroup theory is the Wagner-Preston representation theorem  \cite{GP,VW} which states that every inverse semigroup $S$ is isomorphic to some semigroup of partial isomorphisms on some set. This implies the usual representation theorem for groups as subgroups of isomorphisms on sets. There exists a natural generalisation of this theorem to inverse categories:

\begin{definition}
The inverse category $\bf pIso$ 
is defined as follows:
\begin{itemize}
\item {\bf (Objects)} All sets.
\item {\bf (Arrows)} ${\bf pIso}(X,Y)$ is the set of all partial isomorphisms from $X$ to $Y$. In terms of diagonal representations, it is the set of all subssets $f\subseteq Y\times X$  satisfying
\[ b=y\ \Leftrightarrow \ y=a \ \ \forall \ (b,a),(y,x)\in f \]
\item {\bf (Composition)} This is inherited from the category $\bf Rel$ of relations on sets in the obvious way.
\item {\bf (Generalised inverse)} This is given by $f^\ddag = \{ (x,y) : (y,x)\in f$; the obvious restriction of the relational converse. 
\end{itemize}
The category $\pIso$ has zero arrows, given by $0_{XY}=\emptyset \subseteq Y\times X$.  This is commonly used to define a notion of {\bf orthogonality} by 
\[ f\perp g\in \C(X,Y) \ \ \Leftrightarrow \ \ g^\ddag f=0_X \mbox{ and } gf^\ddag=0_Y \]
\end{definition}

\begin{remark}The category $(\PI,\uplus)$ is well-equipped with self-similar structures; one of the most heavily-studied \cite{PH98,MVL,PH99} is the natural numbers $\mathbb N$, although any countably infinite set will suffice. As demonstrated in an Appendix to \cite{PH13a}, there is a 1:1 correspondence between self-similar structures at $\mathbb N$ and points of the Cantor set (excluding a subset of measure zero).  Other examples include the Cantor set itself \cite{PH98,PH99} and other fractals \cite{blatant}.\end{remark}

\begin{theorem}
Every locally small inverse category $\left( \C,(\ )^\ddag \right)$  is isomorphic to some subcategory of $({\bf pIso}, (\ )^\ddag)$.
\end{theorem}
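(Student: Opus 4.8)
The plan is to imitate the Wagner--Preston representation theorem: just as a concrete inverse semigroup acts faithfully by partial bijections on its own underlying set, an inverse category $\C$ should act faithfully by partial isomorphisms on its own sets of arrows. Concretely, I would build a functor $F:\C\to\pIso$ by putting, on objects, $F(A)=\coprod_{X\in Ob(\C)}\C(X,A)$ --- the set of all $\C$-arrows with codomain $A$ (this is a set since $\C$ is locally small; should $Ob(\C)$ be a proper class one works relative to a fixed universe, or passes to a small subcategory) --- and, on an arrow $f\in\C(X,Y)$,
\[ F(f)\ =\ \{\,(fg,\,g)\ :\ g\in F(X),\ f^\ddag f g = g\,\}\ \subseteq\ F(Y)\times F(X). \]
In words, $F(f)$ is the ``post-compose with $f$'' partial map, with domain exactly those $g$ into $X$ satisfying $f^\ddag f g=g$, and its candidate converse is the partial map $h\mapsto f^\ddag h$ with domain $\{h : ff^\ddag h=h\}$.

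First I would verify that $F(f)$ really is an arrow of $\pIso$ and that $F$ commutes with the dagger. If $f^\ddag f g=g$ then $ff^\ddag(fg)=(ff^\ddag f)g=fg$, so $fg$ lies in the domain of $h\mapsto f^\ddag h$, and $f^\ddag(fg)=f^\ddag f g=g$; the symmetric computation closes the loop. Hence $F(f)$ and the relational converse $F(f^\ddag)$ are mutually inverse partial maps, so $F(f)$ is a partial isomorphism and $F(f)^\ddag=F(f^\ddag)$, which is exactly the generalised-inverse operation of $\pIso$.

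Next, functoriality. That $F(1_A)=\mathrm{id}_{F(A)}$ is immediate, and for $f\in\C(X,Y)$, $g\in\C(Y,Z)$ both $F(gf)$ and $F(g)\circ F(f)$ (composition inherited from ${\bf Rel}$) act by $h\mapsto g(fh)=(gf)h$ wherever defined, so the only real content is the equality of domains
\[ \{\,h:\ f^\ddag f h=h,\ g^\ddag g\,fh=fh\,\}\ =\ \{\,h:\ (gf)^\ddag(gf)\,h=h\,\}. \]
Using contravariance, $(gf)^\ddag(gf)=f^\ddag g^\ddag g f$, so one inclusion is immediate; the reverse is the one genuine computation, and it goes through by commuting the idempotents $ff^\ddag$ and $g^\ddag g$ of the common object $Y$ past one another (Lemma \ref{invbasics}) and invoking $ff^\ddag f=f$. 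Finally, $F$ is injective on objects since $1_A\in F(A)$ has codomain $A$, so $F(A)$ determines $A$; and $F$ is faithful because if $F(f)=F(f')$ for $f,f'\in\C(X,Y)$ then equality of domains forces the idempotents $f^\ddag f$ and $(f')^\ddag f'$ of $X$ to agree --- again using that idempotents of a fixed object commute --- whence evaluating the two partial maps at this common idempotent gives $f=ff^\ddag f=f'(f')^\ddag f'=f'$. A faithful, injective-on-objects, dagger-preserving functor is an isomorphism onto its image, so $\C\cong F(\C)$ and $F(\C)$ is a subcategory of $\pIso$, proving the theorem.

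The main obstacle is the domain-matching identity in the functoriality step; everything else is formal bookkeeping. The subtlety is that the domains of $F(f)$ and $F(g)$ are cut out by a priori unrelated idempotent conditions, one living on $X$ and one (after transport along $f$) on $Y$, so it is not obvious that their intersection is exactly the domain dictated by $F(gf)$. What rescues the computation is the feature characteristic of inverse categories and used repeatedly above --- any two idempotents of a fixed object commute (a consequence of uniqueness of generalised inverses, Lemma \ref{invbasics}) --- so I would isolate that fact once and reuse it, rather than re-deriving the necessary rearrangements of idempotents by hand.
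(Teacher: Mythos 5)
The paper itself gives no argument for this theorem --- it simply cites \cite{CH}, noting that the small-category case is prefigured in \cite{RC} --- so your self-contained Wagner--Preston-style construction is necessarily a different route from the text, and its mathematical core is sound. The regular action you define is the right one: the verification that $g\mapsto fg$ (on $\{g: f^\ddag f g=g\}$) and $h\mapsto f^\ddag h$ (on $\{h: ff^\ddag h=h\}$) are mutually inverse partial injections, the domain-matching identity behind $F(gf)=F(g)\circ F(f)$ --- where commuting the idempotents $ff^\ddag$ and $g^\ddag g$ at the common object $Y$ (Lemma \ref{invbasics}) is indeed the only nontrivial computation --- compatibility with the generalised inverse, faithfulness by evaluating at the idempotent $f^\ddag f$, and injectivity on objects via $1_A$ all go through as you state; injectivity on objects also guarantees that the image really is a subcategory, so the conclusion $\C\cong F(\C)$ is legitimate.

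The one genuine gap is the size issue you mention parenthetically and then set aside. The theorem is stated for \emph{locally small} inverse categories, and local smallness only bounds each hom-set: if $Ob(\C)$ is a proper class, then $F(A)=\coprod_{X\in Ob(\C)}\C(X,A)$ is in general a proper class, hence not an object of $\pIso$ as defined (its objects are sets). Passing to a small subcategory proves a strictly weaker statement, since the whole of $\C$ must be represented, and invoking a fixed Grothendieck universe silently changes what $\pIso$ means. So, as written, your argument establishes the theorem for small inverse categories --- essentially the content the paper attributes to \cite{RC} --- whereas the locally small case, which is precisely the additional content of \cite{CH}, still needs a device for cutting the representing class down to a set (or an explicit universe convention declared at the outset). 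State and resolve that point and the proof is complete.
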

\begin{proof}
This is proved in \cite{CH}, and significantly prefigured (for small categories)  in \cite{RC}.
\end{proof}

The idempotent structure and natural partial ordering on $\pIso$ is particularly well-behaved, as the following standard results demonstrate:

\begin{proposition}
\begin{enumerate} 
\item The natural partial order of $\pIso$ may be characterised in terms of diagonal representations by $f\unlhd g\in \pIso(X,Y)$ iff $f\subseteq g \in Y\times X$.
\item All idempotents $e^2=e\in \pIso(X,X)$ are simply partial identities $1_{X'}$ for some $X'\subseteq X$, and thus $\pIso$ is isomorphic to its own Karoubi envelope.
\item The meet and join w.r.t. the natural partial order are given by, for all $f,g\in \pIso(X,Y)$  
\[ f\vee g = f\cup g \ \mbox{ and } f\wedge g = f\cap g \]
when these exist. Therefore, set of idempotents at an object is a distributive lattice. 
\item Given an arbitrarily indexed set $\{ f_j \in \pIso(X,Y) \}_{j\in J}$ of pairwise-orthogonal elements, together with arbitrary $a\in \PI(W,X)$ and $b\in \PI(Y,Z)$,  
 then $\bigvee_{j\in J} f_j \in \pIso(X,Y)$ exists, as does $\bigvee_{j\in J}b f_ja \in \pIso(W,Z)$, and
\[ b\left( \bigvee_{j\in J} f_j\right) a \ = \  \bigvee_{j\in J} \left( bf_ja \right) \]
\end{enumerate}
\end{proposition}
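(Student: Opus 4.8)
The plan is to treat the four parts in an order that lets each feed the next, starting with part~2, which is the structural fact everything else rests on. To identify the idempotents: by Lemma~\ref{invbasics} every $e=e^2\in\pIso(X,X)$ satisfies $e=e^\ddag$, so as a subset of $X\times X$ it is symmetric, $(a,b)\in e\Rightarrow(b,a)\in e$. Composing forces $(a,a)\in e\circ e=e$, and then functionality and injectivity of $e$ as a partial isomorphism, applied to the pairs $(a,b),(a,a)\in e$, give $b=a$. Hence $e=\{(a,a):a\in X'\}=1_{X'}$ with $X'=\{a:(a,a)\in e\}\subseteq X$. Since $X'$ is again an object of $\pIso$, the inclusion $\iota\in\pIso(X',X)$ and its generalised inverse $\iota^\ddag$ split $e$ (so that $\iota\iota^\ddag=1_{X'}=e$ in $\pIso(X,X)$ and $\iota^\ddag\iota=1_{X'}$); thus every idempotent of $\pIso$ splits through an object of $\pIso$, which is the asserted identification of $\pIso$ with its own Karoubi envelope.

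For part~1, the description of idempotents says that $f=ge$ with $e$ idempotent is precisely $f=g\,1_{X'}$, the corestriction of $g$ to $X'\subseteq X$, hence $f\subseteq g$; conversely, if $f\subseteq g$ then $g$ agrees with $f$ on $\dom f$, so $f=g\,1_{\dom f}$, giving $f\unlhd g\Leftrightarrow f\subseteq g$. Part~3 is then immediate bookkeeping: for $f,g\in\pIso(X,Y)$ the intersection $f\cap g\subseteq f$ is again a partial isomorphism, is a lower bound of $\{f,g\}$ by part~1, and dominates every common lower bound $h$ (since $h\subseteq f$ and $h\subseteq g$ force $h\subseteq f\cap g$), so $f\wedge g=f\cap g$ always exists; for joins, if $f\vee g$ exists, call it $k$, then $f\cup g\subseteq k$ is a partial isomorphism, an upper bound of $\{f,g\}$, and $\unlhd k$, so minimality of $k$ gives $k=f\cup g$ (and conversely, whenever $f\cup g$ is a partial isomorphism it is visibly the least upper bound). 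Finally, restricting to idempotents at $X$, part~2 identifies $(\{\text{idempotents}\},\unlhd)$ with $(\mathcal P(X),\subseteq)$ via $1_{X'}\mapsto X'$, with $1_{X'}\wedge 1_{X''}=1_{X'\cap X''}$ and $1_{X'}\vee 1_{X''}=1_{X'\cup X''}$ always defined, and the powerset lattice is distributive.

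For part~4 the key observation is that $f\perp g$ in $\pIso$ is exactly disjointness of both domains and ranges: unwinding the diagonal-composition definitions, $g^\ddag f=0_X$ says $\mathrm{ran}(f)\cap\mathrm{ran}(g)=\emptyset$ and $g f^\ddag=0_Y$ says $\dom(f)\cap\dom(g)=\emptyset$. Hence for a pairwise-orthogonal family $\{f_j\}_{j\in J}$ the union $\bigcup_j f_j\subseteq Y\times X$ has pairwise-disjoint fibres and co-fibres, so it is a partial isomorphism, and the argument for binary joins extends verbatim to show $\bigvee_j f_j=\bigcup_j f_j$. For the composites, writing $b^\ddag b$ and $a a^\ddag$ as partial identities and using that pre-/post-composition with a partial identity only shrinks a relation, one gets $(b f_i a)^\ddag(b f_j a)\subseteq a^\ddag f_i^\ddag f_j a=0_W$ and $(b f_i a)(b f_j a)^\ddag\subseteq b f_i f_j^\ddag b^\ddag=0_Z$ for $i\neq j$, so $\{b f_j a\}_j$ is again pairwise orthogonal and $\bigvee_j(b f_j a)=\bigcup_j(b f_j a)$ exists. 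The required identity is then just the distributivity of relational composition over arbitrary unions,
\[ b\Bigl(\bigcup_{j\in J} f_j\Bigr)a \;=\; \bigcup_{j\in J}\bigl(b f_j a\bigr) \]
in $\mathbf{Rel}$, transported along the inclusion $\pIso\hookrightarrow\mathbf{Rel}$ now that all relations in sight are known to be partial isomorphisms and all the joins involved coincide with unions.

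None of this is deep; the only genuinely non-formal input is Lemma~\ref{invbasics} (equivalently, the Munn--Penrose result), which is what makes idempotents symmetric and hence diagonal. The part needing actual care is the bookkeeping of variances when translating $g^\ddag f=0$ and $g f^\ddag=0$ into disjointness of ranges versus domains, and being explicit that suprema computed in $\pIso$ agree with unions of relations, so that the purely relational distributivity law may be reused verbatim in the last step.
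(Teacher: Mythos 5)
Your proof is correct, and all four parts check out: the identification of idempotents with partial identities (via $e=e^\ddag$ from Lemma~\ref{invbasics}, symmetry, and injectivity), the characterisation $f\unlhd g\Leftrightarrow f\subseteq g$, the meet/join description, and the translation of orthogonality into disjointness of domains and ranges followed by distributivity of relational composition over unions. The paper itself offers no argument here beyond an appeal to ``standard results of the theory of inverse categories'', declaring 1.\ a consequence of the definition and 2.--4.\ corollaries; so your contribution is to supply the details, and you do so along a slightly different spine, taking part~2 (idempotents are partial identities) as the foundation and deriving part~1 from it rather than the reverse. That ordering is arguably the cleaner one, since the subset characterisation of $\unlhd$ and the lattice statements all reduce to knowing what the idempotents look like, and your careful unwinding of $g^\ddag f=0_X$ versus $gf^\ddag=0_Y$ into disjoint ranges versus disjoint domains is exactly the bookkeeping the paper leaves implicit. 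One minor caveat, which concerns the statement rather than your argument: splitting every idempotent through the subobject $X'\subseteq X$ gives an equivalence between $\pIso$ and its Karoubi envelope (distinct ambient sets $X$ carrying the same partial identity $1_{X'}$ give distinct objects of the envelope), so ``isomorphic'' should be read in that standard, slightly abusive sense; your splitting construction is precisely what that claim requires. Also, writing $\iota\iota^\ddag=1_{X'}=e$ overloads the symbol $1_{X'}$ (identity of the object $X'$ versus partial identity on $X$), but the intended meaning is unambiguous.
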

\begin{proof}
These are all standard results for the theory of inverse categories; 
1. is a straightforward consequence of the definition of the natural partial order, and 2.-4. follow as simple corollaries.
\end{proof}

\section{Monoidal tensors and self-similarity in $\pIso$}
We have seen that $\pIso$ is a $\dagger$ category;  it is also a $\dagger$ monoidal category with respect to two distinct monoidal tensors - the Cartesian product $\_ \times \_$ and the disjoint union $\_ \uplus \_$. For the purposes of this paper, we will study the disjoint union. We make the following formal definition:

\begin{definition} We define the {\bf disjoint union} $\uplus : \PI\times \PI \rightarrow \PI$ to be the following monoidal tensor:
\begin{itemize}
\item {\bf (Objects)} $A\uplus B = A\times \{0\} \cup B\times \{ 1\}$, for all $A,B\in Ob(\PI)$.
\item {\bf Arrows)} Given $f\in \PI(A,B)$ and $g\in \PI(X,Y)$, we define $f\uplus g =inc_{00}( f)  \cup inc_{11}(g)\subseteq (B\uplus Y) \times (A\uplus X)$ where $inc_{00}$ is the canonical (for the Cartesian product) isomorphism $B\times A\cong B\times \{0\} \times A\times \{0\}$, and similarly, $inc_{11}:Y\times X \cong Y\times \{ 1\} \times X\times \{1\}$.
\end{itemize}
It is immediate that $(\PI,\uplus)$ is a $\dagger$-monoidal tensor since, as a simple consequence of the definition of generalised inverses, all isomorphisms are unitary.
\end{definition}

By contrast with the behaviour of disjoint union in (for example) the category of relations, it is neither a product nor a coproduct on $\PI$. Despite this, it has analogues of projection \& inclusion maps:

\begin{definition}\label{projinc}
Given $X,Y\in Ob(\PI)$, the arrows $\iota_l\in \PI(X,X\uplus Y)$ and $\iota_r\in \PI(Y,X\uplus Y)$ are defined by 
$\iota_l(x)= (x,0)\in X\uplus Y$ and $\iota_r(y)=(y,1)\in X\uplus Y$.
By convention, we denote their generalised inverses by $\pi_l:\in\PI(X\uplus Y,X)$ and $\pi_r\in\PI (X\uplus Y,Y)$, giving
\[ \xymatrix{
						&	X \uplus Y \ar@/^8pt/[dl]^{\pi_l} \ar@/_8pt/[dr]_{\pi_l} 	&			\\
X \ar@/^8pt/[ur]^{\iota_l}		&												& Y \ar@/_8pt/[ul]_{\iota_r}
}
\]
Following \cite{PH98} we refer to these arrows as the {\bf projection} and {\bf inclusion} arrows; they are sometimes \cite{HA,AHS} called {\bf quasi-projections / injections}, in order to emphasise that they are not derived from categorical products / coproducts.
By construction, the projections / inclusions satisfy the following four identities:
\[ \pi_r\iota_l=0_{XY} \ \ , \ \ \pi_l\iota_r = 0_{YX} \  \ , \ \ \pi_l\iota_l=1_X \  \ , \ \ \pi_r\iota_r = 1_Y \]
\end{definition}

As noted in \cite{PH98,PH99}, the above arrows can be `internalised' by a self-similar structure $(S,\code,\decode)$, in a similar way to canonical isomorphisms (see Section \ref{internals}). Doing so will give an embedding of a well-studied inverse monoid into $\PI(S,S)$.

\begin{definition}{\em Polycyclic monoids}\label{p2}\\
The $2$ generator {\bf polycyclic monoid} $P_2$ is defined in \cite{NP} to be the inverse monoid given by the generating set $\{ p,q \}$, together with the relations
\[ pp^{-1}=1=qq^{-1} \ \ ,\ \ pq^{-1}=0 =qp^{-1} \]
\end{definition}

\begin{remark} This inverse monoid is also familiar to logicians as the (multiplicative part of) the {\em dynamical algebra} of \cite{DR,GOI}. It is also familiar from the theory of state machines as the syntactic monoid of a pushdown automaton with a binary stack \cite{RG}, and to pure mathematicians as the monoid of partial homeomorphisms of the Cantor set \cite{PH98}.
\end{remark}

The following result on polycyclic monoids will prove useful:
\begin{lemma}\label{polyprop}
 $P_2$ is {\em congruence-free}; i.e. the only composition-preserving equivalence relations on $P_2$ are either the universal congruence $r\sim s$ for all $r,s\in P_2$, or the identity congruence $r\sim s \ \Leftrightarrow \ r=s$ for all $r,s\in P_2$.
\end{lemma}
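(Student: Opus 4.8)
The plan is to show that every composition-preserving equivalence relation $\sim$ on $P_2$ that is not the identity relation must be the universal one; since the universal relation and the identity relation are both evidently such congruences, this gives the dichotomy. The key reduction is that $1\sim 0$ already forces universality: for arbitrary $r,s\in P_2$ one has $r=r\cdot 1\sim r\cdot 0=0$ and likewise $s\sim 0$, hence $r\sim s$. So it suffices to prove that if $r\sim s$ for some $r\neq s$, then $1\sim 0$. Throughout I would use the standard normal form of polycyclic monoids (from \cite{NP}): every nonzero element of $P_2$ is uniquely $u^{-1}v$ for words $u,v$ in $p,q$; one has $ww^{-1}=1$ for every word $w$ (iterating $pp^{-1}=1=qq^{-1}$), while $w^{-1}w$ is in general only an idempotent; $gh^{-1}=0$ for distinct generators $g\neq h$; and a product $(u^{-1}v)(x^{-1}y)$ is computed by cancelling $v$ against $x$ from the right, the result being $0$ unless one of $v,x$ is a suffix of the other.

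First I would record two elementary facts. \textbf{(A)} If $a=u^{-1}v$ is nonzero then $u\,a\,v^{-1}=1$ (using $uu^{-1}=1=vv^{-1}$); moreover, for any $b\in P_2$, the equation $u\,b\,v^{-1}=1$ forces $a\unlhd b$, since then $a=u^{-1}(u\,b\,v^{-1})v=(u^{-1}u)\,b\,(v^{-1}v)$ and $e\,b\,f\unlhd b$ for any idempotents $e,f$. \textbf{(B)} If $d=u^{-1}v$ is nonzero with $v\neq\epsilon$, write $v=v_0 g$ with $g$ a generator and let $h$ be the other generator; then $d\,h^{-1}=u^{-1}v_0(gh^{-1})=0$ while $1\cdot h^{-1}=h^{-1}\neq 0$. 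Symmetrically, if $d=u^{-1}$ with $u\neq\epsilon$ ending in the generator $g$, and $h$ is the other generator, then $h\,d=h\,u^{-1}=0$ while $h\cdot 1=h\neq 0$.

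Next I would argue as follows. Suppose $r\sim s$ with $r\neq s$. If one of them is $0$, say $s=0$, then $r$ is nonzero, and writing $r=u^{-1}v$ and using (A) gives $1=u\,r\,v^{-1}\sim u\cdot 0\cdot v^{-1}=0$. So assume $r=u^{-1}v$ and $s=x^{-1}y$ are both nonzero. Applying (A) to $r$ gives $1=u\,r\,v^{-1}\sim u\,s\,v^{-1}=:d$, and applying (A) to $s$ gives $1=x\,s\,y^{-1}\sim x\,r\,y^{-1}=:d'$. If both $d=1$ and $d'=1$, then the second part of (A) yields $r\unlhd s$ (from $u\,s\,v^{-1}=1$) and $s\unlhd r$ (from $x\,r\,y^{-1}=1$), so $r=s$, a contradiction; hence, relabelling if necessary, we may assume $d\neq 1$ and $1\sim d$. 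It then remains to pass from $1\sim d$, $d\neq 1$, to $1\sim 0$. If $d=0$ we are done, so let $d=u^{-1}v$ be nonzero with $(u,v)\neq(\epsilon,\epsilon)$. If $v\neq\epsilon$, then by (B) there is a generator $h$ with $d\,h^{-1}=0$ and $h^{-1}\neq 0$, whence $h^{-1}=1\cdot h^{-1}\sim d\cdot h^{-1}=0$, and multiplying by $h$ (using $hh^{-1}=1$) gives $1=h h^{-1}\sim h\cdot 0=0$. If instead $v=\epsilon$, so $d=u^{-1}$ with $u\neq\epsilon$, then by the symmetric part of (B) there is a generator $h$ with $h\,d=0$ and $h\neq 0$, so $h=h\cdot 1\sim h\cdot d=0$; the idempotent $h^{-1}h$ is then nonzero (else $h=h(h^{-1}h)=0$) and satisfies $h^{-1}h\sim h^{-1}\cdot 0=0$, and applying (A) to $h^{-1}h$ (whose normal form has both words equal to $h$) gives $1=h(h^{-1}h)h^{-1}\sim h\cdot 0\cdot h^{-1}=0$. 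In all cases $1\sim 0$, completing the argument.

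The step I expect to be the main obstacle is the one ruling out the degenerate possibility that "multiplying $r\sim s$ up to the identity" collapses both $r$ and $s$ to $1$, which would yield no information at all. This is exactly where the rigidity built into (A) matters: the equation $u\,b\,v^{-1}=1$ is strong enough to pin $b$ down to lie above $u^{-1}v$ in the natural partial order, so running (A) simultaneously from the normal forms of $r$ and of $s$ and combining the two resulting inequalities forces $r=s$. The other ingredients --- the two halves of (B) and the closing manipulations --- are one-line computations in $P_2$, but one must take care over which side the annihilating generator acts on and remember that $ww^{-1}=1$ whereas $w^{-1}w$ is in general only idempotent.
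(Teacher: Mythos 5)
Your argument is correct, but it takes a genuinely different route from the paper: the paper does not prove the lemma at all, it simply invokes the general congruence-freeness theorem for polycyclic monoids from the cited work of Nivat and Perrot (which covers $P_n$ for every $n\geq 2$), whereas you give a self-contained proof for $P_2$. Your two reductions are sound: $1\sim 0$ forces the universal congruence, and any nontrivial relation $r\sim s$ is pushed to $1\sim 0$ by multiplying with the words from the normal forms $r=u^{-1}v$, $s=x^{-1}y$; the degenerate case $usv^{-1}=1=xry^{-1}$ is correctly excluded by your observation (A), which gives $r\unlhd s$ and $s\unlhd r$, hence $r=s$ by antisymmetry of the natural partial order. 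The computations in (B) are consistent with the paper's convention $pp^{-1}=1=qq^{-1}$, $pq^{-1}=0=qp^{-1}$ (so cancellation in $(u^{-1}v)(x^{-1}y)$ compares suffixes of $v$ and $x$), and the last case $d=u^{-1}$ admits a shortcut: from $h\sim 0$ one gets $1=hh^{-1}\sim 0\,h^{-1}=0$ directly, without the detour through the idempotent $h^{-1}h$. What the citation buys the paper is brevity and the general $n$-generator statement; what your argument buys is an explicit elementary proof whose only imported ingredients are the normal form $u^{-1}v$ with its multiplication rule (themselves from the same reference) and standard inverse-semigroup facts about idempotents and the natural partial order already available in the paper, and it generalises verbatim to $P_n$ for any $n\geq 2$.
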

\begin{proof}
This is a special case of a general result of \cite{NP}. 
Congruence-freeness is an example of Hilbert-Post completeness; categorically, it is closely related to the `no simultaneous strictification' theorem of \cite{PHarxiv}.
\end{proof}

The following result, generalising a preliminary result of  \cite{PH98,PH99}, makes the connection between embeddings of polycyclic monoids and internalisations of projection/ injection arrows of $\PI$ precise:
\begin{theorem}\label{SSP2}
Let $S$ be a self-similar object (and hence a $\dagger$ self-similar object) of $\PI$. We say that an inverse monoid homomorphism $\phi:P_2\rightarrow \PI(S,S)$ is a {\bf strong embedding} when it satisfies the condition 
\[ \phi(p^\dagger p) \vee \phi(q^\dagger q) \ = \ 1_S \]
Then every strong embedding $\phi :P_2\rightarrow \PI(S,S)$ uniquely determines, and is uniquely determined by, a $\dagger$ self-similar structure at $S$.
\end{theorem}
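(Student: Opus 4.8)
Since $\PI$ is an inverse category, every isomorphism in it is unitary, so a $\dagger$ self-similar structure at $S$ is nothing more than an isomorphism $\decode\in\PI(S,S\uplus S)$, with $\code$ necessarily equal to $\decode^\dagger$. The plan is to exhibit two mutually inverse assignments between strong embeddings $\phi:P_2\to\PI(S,S)$ and such isomorphisms. From a $\dagger$ self-similar structure $(\code,\decode)$ I would build $\phi=\phi_{(\code,\decode)}$ by setting $\phi(p)=\pi_l\decode$ and $\phi(q)=\pi_r\decode$; from a strong embedding $\phi$ I would build the isomorphism $\decode_\phi=\iota_l\phi(p)\vee\iota_r\phi(q)$, with $\code_\phi=\decode_\phi^\dagger$. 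Everything then follows from the four quasi-injection identities $\pi_l\iota_l=1_S=\pi_r\iota_r$, $\ \pi_l\iota_r=0_S=\pi_r\iota_l$ of Definition~\ref{projinc}, the decomposition $\iota_l\pi_l\vee\iota_r\pi_r=1_{S\uplus S}$ expressing that the two quasi-injections jointly cover $S\uplus S$, and the fact that in $\PI$ both composition and the involution distribute over orthogonal joins.

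First I would check that $\phi_{(\code,\decode)}$ is well defined. Since $P_2$ is presented as an inverse monoid by the generators $p,q$ and the relations of Definition~\ref{p2}, it suffices to verify those relations for the images: using $\decode^\dagger=\code$ and the quasi-injection identities one computes, for instance, $\phi(p)\phi(p)^\dagger=\pi_l\decode\code\iota_l=\pi_l\iota_l=1_S$ and $\phi(p)\phi(q)^\dagger=\pi_l\decode\code\iota_r=\pi_l\iota_r=0_S$, and symmetrically for the remaining two. That $\phi_{(\code,\decode)}$ is moreover a \emph{strong} embedding then follows by pulling $\code$ and $\decode$ out through the (orthogonal) join, which gives $\phi(p^\dagger p)\vee\phi(q^\dagger q)=\code(\iota_l\pi_l\vee\iota_r\pi_r)\decode=\code\decode=1_S$.

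For the reverse assignment, given a strong embedding $\phi$ I would first observe that $\iota_l\phi(p)$ and $\iota_r\phi(q)$ are orthogonal (the relevant composites acquire a factor $\pi_r\iota_l=0_S$, or a factor $\phi(qp^{-1})=0_S$ coming from the $P_2$ relations), so the join $\decode_\phi$ exists. Expanding $\code_\phi\decode_\phi=\decode_\phi^\dagger\decode_\phi$ over this orthogonal join, the cross terms vanish through the cross-injection identities and the two surviving terms give $\phi(p)^\dagger\phi(p)\vee\phi(q)^\dagger\phi(q)=\phi(p^\dagger p)\vee\phi(q^\dagger q)$, which is $1_S$ \emph{precisely} by the strong embedding condition — this is the only place that hypothesis is used. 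Symmetrically, $\decode_\phi\code_\phi=\decode_\phi\decode_\phi^\dagger$ collapses to $\iota_l\phi(pp^{-1})\pi_l\vee\iota_r\phi(qq^{-1})\pi_r=\iota_l\pi_l\vee\iota_r\pi_r=1_{S\uplus S}$, now using the relations $pp^{-1}=1=qq^{-1}$. Hence $\decode_\phi$ is an isomorphism with inverse $\decode_\phi^\dagger$, i.e.\ a $\dagger$ self-similar structure.

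It remains to see the two assignments are mutually inverse, which falls out of the same identities: $\decode_{\phi_{(\code,\decode)}}=\iota_l\pi_l\decode\vee\iota_r\pi_r\decode=(\iota_l\pi_l\vee\iota_r\pi_r)\decode=\decode$, while $\phi_{(\code_\phi,\decode_\phi)}(p)=\pi_l(\iota_l\phi(p)\vee\iota_r\phi(q))=\pi_l\iota_l\phi(p)\vee\pi_l\iota_r\phi(q)=\phi(p)$, and likewise on $q$, so the two homomorphisms agree on a generating set of $P_2$ and are therefore equal. This gives the asserted bijective correspondence. There is no deep obstacle here; the only real care required is bookkeeping — keeping track of which quasi-injection identity and which instance of the join-distributivity property is in play at each step, and checking the orthogonality side-condition before any join is formed or split. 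Finally, it is worth remarking that congruence-freeness of $P_2$ (Lemma~\ref{polyprop}), together with $0_S\neq 1_S$ whenever $S\neq\emptyset$, forces every strong embedding to be injective, so the terminology is not an abuse.
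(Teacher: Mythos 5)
Your proposal is correct and follows essentially the same route as the paper's own proof: the same assignments $\phi(p)=\pi_l\decode$, $\phi(q)=\pi_r\decode$ in one direction and $\decode_\phi=\iota_l\phi(p)\vee\iota_r\phi(q)$, $\code_\phi=\decode_\phi^\dagger$ in the other, verified via the same quasi-projection identities, distributivity of composition over orthogonal joins, and congruence-freeness of $P_2$, with your explicit check that the two assignments are mutually inverse being a slightly sharper form of the paper's uniqueness assertions. The one step you pass over lightly --- that an abstract strong embedding satisfies $\phi(qp^{-1})=0_S$, which is what makes $\iota_l\phi(p)$ and $\iota_r\phi(q)$ orthogonal so their join exists --- is precisely the step the paper also leaves implicit (``it is straightforward that these are orthogonal''), so you are not omitting anything the paper's own argument supplies.
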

\begin{proof}
Let $\pi_l,\pi_r\in\PI(S\uplus S,S$ and $\iota_l,\iota_r\in \PI(S,S\uplus S)$ be the projections / inclusions of Definition \ref{projinc}, and let $(S,\code,\decode)$ be a self-similar structure. We define $\phi_\dc:P_2\rightarrow \PI(S,S)$ by its action on the generators of $P_2$, giving 
\[ \phi_\dc(p)=\pi_l\decode \ \mbox{ and } \ \phi_\dc(q) = \pi_r\decode  
\] 
Their generalised inverses are then $\phi_\dc(p^\ddag) = \code\iota_l$ and $\phi_\dc (q^\ddag) = \code \iota_r$. Thus 
\[  \phi_\dc(p)\phi_\dc(p^\ddag) = \pi_l\decode \code\iota_l = 1_S = \pi_r\decode \code \iota_r = \phi_\dc(q)\phi_\dc (q^\ddag) \]
Similarly, $ \phi_\dc(p)\phi_\dc (q^\ddag) = \pi_l\decode  \code \iota_r = 0_{S} =  \pi_r\decode\code\iota_l = \phi_\dc(q)\phi_\dc (p^\ddag)$ and so $\phi_\dc$ is a homomorphism. Since $P_2$ is congruence-free it is also an embedding. To demonstrate that it is also a strong embedding, 
\[ 1_S= \code 1_{S\uplus S} \decode = \code ( \iota_l\pi_l \vee \iota_r\pi_r) \decode \]
\[ = \code \iota_l\pi_l\decode \vee \code \iota_r\pi_r \decode = \phi_\dc(p^\ddag p) \vee \phi_\dc (q^\ddag q) \]
as required. Further, given another self-similar structure $(S,c,d)$ satisfying $\phi_{dc}=\phi_\dc$, then $\code=c\in \PI(S\uplus S,S)$ and $\decode=d\in \PI(S,S\uplus S)$.

Conversely, let  $\phi:P_2\rightarrow \PI(S,S)$ be  a strong embedding, and consider the two arrows
$\iota_l\phi(p)\in \PI(S,S\uplus S)$ and $\iota_r \phi(q) \in \PI(S,S\uplus S)$.
it is straightforward that these are orthogonal; we thus define
\[ \decode_\phi =  \iota_l\phi(p)\vee  \iota_r \phi(q)  \in \PI(S,S\uplus S) \]
and take $\code_\phi=\decode_\dc^\ddag$.
The strong embedding condition implies that $\code_\phi \decode_\phi = 1_S$ and $\decode_\phi \code_\phi = 1_{S\uplus S}$; thus we have a self-similar structure, as required. 
 Further, given another strong embedding $\psi:P_2\rightarrow \PI(S,S)$, then $\code_\phi=\code_\psi$ iff $\phi = \psi$.
 \end{proof}

\subsection{Matrix representations from self-similar structures}
We are now in a position to demonstrate how self-similar structures in $\PI$ determine matrix representations of arrows.
\begin{theorem}
Let $S\in Ob(\PI)$ be a self-similar object. Then every self-similar structure $(S,\code,\decode)$ determines matrix representations of arrows of $\PI(S,S)$.
\end{theorem}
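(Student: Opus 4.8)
\emph{Proof proposal.} The plan is to build the matrix representation as a composite of two isomorphisms of inverse monoids. Throughout, take the semi-monoidal tensor of Definition \ref{basicselfsim} to be the disjoint union $\uplus$, and let $\iota_l,\iota_r,\pi_l,\pi_r$ be the inclusions and projections of Definition \ref{projinc} for $S\uplus S$, so that $\pi_i\iota_j=\delta_{ij}1_S$ and $\iota_l\pi_l,\iota_r\pi_r$ are orthogonal idempotents with $\iota_l\pi_l\vee\iota_r\pi_r=1_{S\uplus S}$.

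\textbf{Step 1 (conjugation by the self-similar structure).} Since $\code$ and $\decode=\code^\ddag$ are mutually inverse unitaries, $\Phi(b)=\code\,b\,\decode$ defines a map $\Phi:\PI(S\uplus S,S\uplus S)\to\PI(S,S)$ with two-sided inverse $a\mapsto\decode\,a\,\code$. Using $\decode\code=1_{S\uplus S}$ one gets $\Phi(b)\Phi(b')=\code\,b(\decode\code)b'\,\decode=\Phi(bb')$; also $\Phi(1_{S\uplus S})=\code\decode=1_S$; and $\Phi(b^\ddag)=\Phi(b)^\ddag$ because $\code^\ddag=\decode$. Hence $\Phi$ is an isomorphism of inverse monoids.

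\textbf{Step 2 (block decomposition).} Invoking the distributivity of composition over joins of pairwise-orthogonal families in $\pIso$ recorded in the Proposition above, every $b\in\PI(S\uplus S,S\uplus S)$ satisfies
\[ b=(\iota_l\pi_l\vee\iota_r\pi_r)\,b\,(\iota_l\pi_l\vee\iota_r\pi_r)=\bigvee_{i,j\in\{l,r\}}\iota_i\,(\pi_i\,b\,\iota_j)\,\pi_j, \]
the terms on the right being pairwise orthogonal. Thus $b$ is determined by the $2\times2$ array $M(b)=(b_{ij})$ with $b_{ij}:=\pi_i\,b\,\iota_j\in\PI(S,S)$, and conversely an array $(c_{ij})$ is $M(b)$ for a unique $b$ precisely when $\{\iota_i c_{ij}\pi_j\}_{i,j}$ is pairwise orthogonal --- by the identities above, exactly when the entries of each column have pairwise disjoint domains and those of each row pairwise disjoint ranges. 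A direct computation, again inserting $1_{S\uplus S}=\iota_l\pi_l\vee\iota_r\pi_r$ and pushing composition through the join, shows $M(bb')_{ik}=\bigvee_j b_{ij}b'_{jk}$, $M(1_{S\uplus S})=\mathrm{diag}(1_S,1_S)$, and $M(b^\ddag)_{ij}=(b_{ji})^\ddag$ (using $\iota_i^\ddag=\pi_i$); so the admissible arrays form an inverse monoid under matrix multiplication (with $\vee$ as addition) and conjugate transpose, onto which $M$ is an isomorphism.

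\textbf{Step 3 (the representation).} Composing, $M\circ\Phi^{-1}$ assigns to $a\in\PI(S,S)$ the matrix with $(i,j)$-entry $\pi_i\decode\,a\,\code\,\iota_j$, with inverse sending $(c_{ij})$ to $\bigvee_{i,j}\code\,\iota_i\,c_{ij}\,\pi_j\,\decode$. In the notation of Theorem \ref{SSP2}, with $\phi_\dc$ the strong embedding determined by $(S,\code,\decode)$ (so $\phi_\dc(p)=\pi_l\decode$, $\phi_\dc(q)=\pi_r\decode$), the entries are $\phi_\dc(g_i)\,a\,\phi_\dc(g_j)^\ddag$ and $a=\bigvee_{i,j}\phi_\dc(g_i)^\ddag\,c_{ij}\,\phi_\dc(g_j)$ with $g_l=p$, $g_r=q$. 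This is the required matrix representation: faithful and compatible with composition, identities and the dagger. Iterating (using the isomorphisms $S\cong S\uplus S\cong\cdots\cong S^{\uplus n}$ supplied by the self-similar structure) yields $n\times n$ matrix representations for every $n\geq1$, and distinct self-similar structures at $S$ give distinct such representations, via the unique unitary of Proposition \ref{unique}.

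\textbf{Main obstacle.} The substance lies not in any individual equation but in the bookkeeping of joins: each substitution of $1_{S\uplus S}=\iota_l\pi_l\vee\iota_r\pi_r$ inside a composite, and each reconstruction of an arrow from its blocks, is legitimate only because the family involved is pairwise orthogonal (which must be derived from the relations of Definition \ref{projinc}) and because composition in $\pIso$ distributes over orthogonal joins. Since $\uplus$ is not a biproduct on $\pIso$, not every array of arrows is admissible, so the orthogonality conditions cutting out the image of $M$ have to be established rather than assumed; this is the only step requiring more than formal manipulation.
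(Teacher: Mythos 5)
Your proposal is correct, but it is organised differently from the paper's own argument. The paper works entirely inside the endomorphism monoid $\PI(S,S)$: it fixes the strong embedding $\phi_\dc:P_2\rightarrow\PI(S,S)$ of Theorem \ref{SSP2}, defines $[f]_\dc$ by the same entries $\phi_\dc(p)f\phi_\dc(p^\ddag)$, etc., proves that the two summands in each entry of a matrix product are orthogonal by the ``pushing an idempotent through an arrow'' lemma, establishes $[gf]_\dc=[g]_\dc[f]_\dc$ by inserting $1_S=\phi_\dc(p^\ddag p)\vee\phi_\dc(q^\ddag q)$ and distributing, and proves faithfulness by a separate pre/post-composition argument using the same identity. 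You instead factor the representation as $M\circ\Phi^{-1}$, where $\Phi(b)=\code\, b\,\decode$ is an isomorphism of inverse monoids $\PI(S\uplus S,S\uplus S)\cong\PI(S,S)$ and $M$ is the block decomposition at $S\uplus S$ coming from $1_{S\uplus S}=\iota_l\pi_l\vee\iota_r\pi_r$; since $\phi_\dc(p)=\pi_l\decode$ and $\phi_\dc(q)=\pi_r\decode$, the composite produces exactly the paper's matrices, and multiplicativity, preservation of identities and dagger, and faithfulness then come for free from the fact that both maps are bijective homomorphisms. What your route buys is a conceptually cleaner statement (the representation is an isomorphism onto an inverse monoid of admissible $2\times2$ arrays) and it localises all the delicate orthogonality bookkeeping at $S\uplus S$, where the row/column disjointness conditions are transparent; what the paper's route buys is that it never leaves $\PI(S,S)$ and is phrased purely in terms of the $P_2$-embedding, which is the formalism reused in the subsequent change-of-representation and diagonalisation theorems, and it works from the abstract strong-embedding condition rather than the concrete identity $1_{S\uplus S}=\iota_l\pi_l\vee\iota_r\pi_r$ (though that identity is what the paper itself uses to verify strength in Theorem \ref{SSP2}, so nothing is lost). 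One small caution: your closing claim that iterating gives unambiguous $n\times n$ representations glosses the subtlety the paper flags in its remark --- because the internalised tensor is never strictly associative, an $n\times n$ array only represents an arrow once a bracketing (a choice of iterated self-similar structure) is fixed, which your parenthetical chain of isomorphisms implicitly supplies; this does not affect the correctness of the $2\times2$ case that the theorem asserts.
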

\begin{proof} We use the correspondence between self-similar structures and strong embeddings of polycyclic monoids given in Theorem \ref{SSP2}. Given arbitrary $f\in \PI(S,S)$, we define $[f]_\dc$, the {\bf matrix representation of $f$ determined by $(S,\code,\decode)$} to be the following matrix:
\[ [ f ]_\dc = \left( \begin{array}{ccc} 	\phi_\dc(p) f \phi_\dc(p^\ddag)  	& \ \ 	&  \phi_\dc(p)f \phi_\dc(q^{\ddag}) \\ 
														&	&							\\
						   	\phi_\dc(q) f \phi_\dc(p^{\ddag})  	& \ \ 	& \phi_\dc(q)f \phi_\dc(q^{\ddag}) \\ 	
							\end{array} \right) \]
Given two such matrices of this form, we interpret their {\bf matrix composition} as follows:
\[  \left( \begin{array}{cc} g_{00} & g_{01} \\ g_{10} & g_{11} \end{array}\right)  \left( \begin{array}{cc} f_{00} & f_{01} \\ f_{10} & f_{11} \end{array} \right)  = \left( \begin{array}{ccc}
g_{00} f_{00} \vee g_{01}f_{10} &	\ \ 	& g_{00}f_{01} \vee g_{01}  f_{11} \\ 
						&		&							\\
g_{10}f_{00} \vee g_{11}f_{10} & 	\ \ 	& g_{10}f_{01} \vee g_{11}f_{11} \end{array}\right) 
\]
that is, the usual formula for matrix composition, with summation interpreted by join in the natural partial order -- provided that the required joins exist. We  prove that this composition is defined for matrix representations determined by a fixed self-similar structure.

{\em In what follows, we abuse notation, for clarity, and refer to $p,q,p^\ddag ,q^\ddag \in \PI (S,S)$ instead of $\phi_\dc( p),\phi_\dc(q),\phi_\dc(p^\ddag),\phi_\dc(q^\ddag) \in \PI(S,S)$. As this proof is based on a single fixed self-similar structure at $S$, we may do this without ambiguity}.

Consider the entry in the top left hand corner of $[g]_\dc[f]_\dc$. Expanding out the definition will give this as $pgp^\ddag pfp^\ddag \vee pgq^\ddag q f p^\ddag$.
To demonstrate that these two terms are orthogonal, $\left(pgp^\ddag pfp^\ddag \right)^\ddag \left( pgq^\ddag q f p^\ddag \right) = pf^\ddag p^\ddag p g^\ddag p^\ddag p gq^\ddag qfp^\ddag$.
Appealing to the `pushing an idempotent through an arrow' technique of Proposition \ref{passingidempotents} gives the existence of some idempotent $e^2=e$ such that 
\[ \left(pgp^\ddag pfp^\ddag \right)^\ddag \left( pgq^\ddag q f p^\ddag \right) = pf^\ddag p^\ddag p  e g^\ddag  gq^\ddag qfp^\ddag \]
Again appealing to this technique gives the existence of some idempotent $E^2=E$ such that $ \left(pgp^\ddag pfp^\ddag \right)^\ddag \left( pgq^\ddag q f p^\ddag \right) = pf^\ddag p^\ddag E  p q^\ddag qfp^\ddag$. 
However, $pq^\ddag=0$ and hence$\left(pgp^\ddag pfp^\ddag \right)^\ddag \left( pgq^\ddag q f p^\ddag \right) = 0$.
as required.
An almost identical calculation will give that  $\left( pgq^\ddag q f p^\ddag \right)^\ddag \left( pgp^\ddag pfp^\ddag \right) = 0$
 and thus these two terms are orthogonal, so the required join exists.

The proof of orthogonality for the other three matrix entries is almost identical; alternatively, it may be derived using  the obvious isomorphism of $P_2$ that interchanges the roles of $p$ and $q$.

It remains to show that composition of matrix repesentations of elements coincides with composition of these elements; we now prove that $[g]_\dc [f]_\dc = [gf]_\dc$. By definition,
\[ [gf]_\dc = \left(\begin{array}{ccc}
							pgfp^\ddag & & pgf q^\ddag \\
									& & 		\\
							qgfp^\ddag & & q gf q^\ddag 
							\end{array}\right) 
\]
As the (implicit) embedding of $P_2$ is strong, $1_S=p^\ddag p \vee q^\ddag q$. We may then 
substitute $g(p^\ddag p \vee q^\ddag q)f$ for $gf$ in the above to get 
\[ [gf]_\dc = \left(\begin{array}{ccc}
							pg(p^\ddag p \vee q^\ddag q)fp^\ddag & & pg(p^\ddag p \vee q^\ddag q)f q^\ddag \\
									& & 		\\
							qg(p^\ddag p \vee q^\ddag q)fp^\ddag & & q g(p^\ddag p \vee q^\ddag q)f q^\ddag 
							\end{array}\right) 
\]
Expanding this out using the distributivity of composition over joins gives the definition of $[g]_\dc [f]_\dc$, and hence $[gf]_\dc=[g]_\dc[f]_\dc$, as required. 

Finally, we need to prove that the representation of arrows as matrices determined by the self-similar structure $(S,\code,\decode)$ is faithful --- that is, $a=b\in \PI(S,S)$ iff $[b]_\dc = [a]_\dc$ (where equality of matrices is taken as component-wise equality). 

The $(\Rightarrow)$ implication is immediate from the definition. For the other direction, $[b]_\dc=[a]_\dc$ when the following four identities are satisfied:
\[ 
 \begin{array}{ccc}
							pap^\ddag = pbp^\ddag & & pa q^\ddag = pbq^\ddag\\
									& & 		\\
							qap^\ddag  = qbp^\ddag & & q a q^\ddag =qbq^\ddag
							\end{array}
\]
Prefixing/ suffixing each of these identities with the appropriate choice selection taken from $\{p,q,p^\ddag,q^\ddag \}$ will give the following identities:
\[ 
 \begin{array}{ccc}
							p^\ddag pap^\ddag p = p^\ddag pbp^\ddag p & \ \ \ \ & p^\ddag pa q^\ddag  q=  p^\ddag pbq^\ddag q \\
									& & 		\\
							 q^\ddag qap^\ddag p = q^\ddag  qbp^\ddag  p & \ \ \ \ &  q^\ddag q a q^\ddag  q = q^\ddag qbq^\ddag q
							\end{array}
\]
Now observe that these four elements are pairwise-orthogonal. We may take their join, and appeal to distributivity of composition over join to get
\[ (p^\ddag p \vee q^\ddag q ) a (p^\ddag p \vee q^\ddag q) =  (p^\ddag p \vee q^\ddag q ) a (p^\ddag p \vee q^\ddag q) \]
However, as the implicit embedding of $P_2$ is strong, $ (p^\ddag p \vee q^\ddag q )=1_S$ and thus $a=b$, as required.

\end{proof}

\begin{remark}It may seem somewhat disappointing that a self-similar structure $(S,\code,\decode)$ simply determines $(2\times 2)$ matrix representations of arrows of $\PI(S,S)$, rather than matrix representations of arbitrary orders. This is not quite the case, but there is a subtlety to do with the behaviour of the internalisation of the tensor $\_ \uplus \_ : \PI\times \PI\rightarrow \PI$. It is immediate from the definition that the internalisation of this tensor by a self-similar structure has the obvious matrix representation:
$[f \uplus_\dc g]_\dc \ = \ \left(\begin{array}{cc} f & 0_S \\ 0_S & g \end{array}\right)$. 
However, recall from Remark \ref{notstrict} that the internalisation $\_ \otimes_\dc \_$  of an arbitrary tensor $\_ \otimes \_ $ can never be strictly associative, even when $\_ \otimes \_$ itself is associative. Thus, in our example in $(\PI,\uplus)$, arbitrary $(n\times n)$ matrices, in the absence of additional bracketing information, cannot {\em ambiguously} represent arrows.  It is of course possible to have unambiguous $n\times n$ matrix representations that are determined by binary treeS whose leaves are labelled with a single formal symbol, and whose nodes are labelled by self-similar structures at $S$ -- however, this is beyond the scope of this paper!
\end{remark}

\subsection{Isomorphisms of self-similar structures as `changes of matrix representation'}
We have seen in Proposition \ref{unique} that $\dagger$ self-similar structures are unique up to unique unitary. We now relate this to the correspondence  in $(\PI,\uplus)$ between $\dagger$ self-similar structures, strong embeddings of $P_2$, and matrix representations.

\begin{lemma}
Let $(S,\code ,\decode)$ and $(S,c,d)$ be two $\dagger$ self-similar structures at the same object of $(\PI,\uplus)$, and let $U$ be the unique isomorphism (following Proposition \ref{unique}) making the following diagram commute: 
\[ \xymatrix{
S\otimes S \ar[d]_\code \ar[dr]^{\code'}		&		&&								&		S\otimes S 	\\
S \ar[r]_U							& S		&&	S \ar[ur]^{\decode'}\ar[r]_{U^{\dagger}}	& S \ar[u]_\decode 	
}
\]
The two strong embeddings $\phi_\dc,\phi_{(c,d)} : P_2\rightarrow \PI(S,S)$ determined by these self-similar structures (as in Theorem \ref{SSP2}) are mutually determined by the following identities:
\[ \begin{array}{ccc}
\phi_{(c,d)}(p) = \phi_\dc (p) U^{-1} & \ \ \ \ & \phi_{(c,d)}(q) = \phi_\dc (q)U^{-1} \\
							&				&					\\
\phi_{(c,d)}(p^\ddag) = U \phi_\dc (p^\ddag)  & \ \ \ \ & \phi_{(c,d)}(q^\ddag) = U\phi_\dc (q^\ddag) \\
\end{array}
\] 
\end{lemma}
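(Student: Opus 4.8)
The plan is to unwind the explicit description of the strong embedding attached to a $\dagger$ self-similar structure from the proof of Theorem~\ref{SSP2}, and then feed in the relations between $(S,\code,\decode)$ and $(S,c,d)$ supplied by Proposition~\ref{unique}. Recall that, writing $\pi_l,\pi_r\in\PI(S\uplus S,S)$ and $\iota_l,\iota_r\in\PI(S,S\uplus S)$ for the projections and inclusions of Definition~\ref{projinc}, the embedding $\phi_\dc$ is given on generators by $\phi_\dc(p)=\pi_l\decode$ and $\phi_\dc(q)=\pi_r\decode$, with generalised inverses $\phi_\dc(p^\ddag)=\code\iota_l$ and $\phi_\dc(q^\ddag)=\code\iota_r$; the embedding $\phi_{(c,d)}$ is described by the same four formulas with $(\code,\decode)$ replaced throughout by $(c,d)$.

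Next I would record the two identities coming from Proposition~\ref{unique}: taking $(S,c,d)$ in the role of the primed structure, $U=c\,\decode$ is the unique unitary with $c=U\code$ and $d=\decode\,U^{\dagger}$, and since $\PI$ is an inverse category every isomorphism is unitary (Theorem~\ref{inversedagger}), so $U^{\dagger}=U^{-1}$. Substituting is then immediate: $\phi_{(c,d)}(p)=\pi_l d=\pi_l\decode\,U^{-1}=\phi_\dc(p)U^{-1}$, and likewise $\phi_{(c,d)}(q)=\phi_\dc(q)U^{-1}$; whereas $\phi_{(c,d)}(p^\ddag)=c\,\iota_l=U\code\,\iota_l=U\phi_\dc(p^\ddag)$ and similarly $\phi_{(c,d)}(q^\ddag)=U\phi_\dc(q^\ddag)$. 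These are precisely the four displayed identities. That they \emph{mutually} determine the data follows because $U$ is invertible: the identities rearrange to $\phi_\dc(p)=\phi_{(c,d)}(p)U$, and so on, and since a strong embedding of $P_2$ is fixed by its values on $\{p,q,p^\ddag,q^\ddag\}$ while $U$ is unique (Proposition~\ref{unique}), each of $\phi_\dc$, $\phi_{(c,d)}$, $U$ is recoverable from the other two. One may also check consistency by applying $(\ )^\ddag$ to $\phi_{(c,d)}(p)=\phi_\dc(p)U^{-1}$ and using unitarity of $U$ to recover $\phi_{(c,d)}(p^\ddag)=U\phi_\dc(p^\ddag)$.

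There is no real obstacle here: the argument is essentially a one-line substitution once the formulas of Theorem~\ref{SSP2} and Proposition~\ref{unique} are in hand. The only thing needing care is the bookkeeping of orientations --- keeping track of which self-similar structure is the ``primed'' one in Proposition~\ref{unique}, so that $U^{-1}$ attaches on the right of the decode-built generators and $U$ on the left of the code-built generalised inverses --- and resisting the (here harmless) temptation to conflate $U^\dagger$, $U^\ddag$ and $U^{-1}$, which coincide only because all isomorphisms in $\PI$ are unitary.
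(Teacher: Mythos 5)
Your proof is correct and follows essentially the same route as the paper's: substitute $c=U\code$ and $d=\decode U^{-1}$ (from Proposition \ref{unique}) into the generator formulas $\phi_{(c,d)}(p)=\pi_l d$, $\phi_{(c,d)}(p^\ddag)=c\,\iota_l$, etc., from Theorem \ref{SSP2}. The paper obtains the $p^\ddag,q^\ddag$ identities by taking generalised inverses rather than direct substitution, but this is the same one-line argument; your additional remarks on mutual determination and on $U^\dagger=U^{-1}$ are harmless elaborations.
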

\begin{proof} 
By construction, $c=U\code$ and $d=\decode U^{-1}$. Thus $\phi_{(c,d)}(p) = \pi_ld = \pi_l\decode U^{-1} = \phi_{\dc}(p)U^{-1}$. Taking duals (generalised inverses) gives  $\phi_{(c,d)}(p^\ddag)=U\code\iota_l = U\phi_\dc(p^\ddag)$. The other two identities follow similarly.
\end{proof}
The above connection between the embeddings of $P_2$ given by two self-similar structures allows us to give the transformation between matrix representations of arrows given by two self-similar structures:
\begin{theorem}\label{changebasis}
Let $(S,\code ,\decode)$ and $(S,c,d)$ be two self-similar structures at the same object of $(\PI,\uplus)$, and let the matrix representations of some arrow $f\in \PI(S,S)$ given by $(S,\code,\decode)$ and $(S,c,d)$ respectively be 
\[ [f]_\dc = \left(\begin{array}{cc} \alpha & \beta \\ \gamma & \delta \end{array} \right) \ \ \mbox { and } \ \  [f]_{(c,d)} = \left(\begin{array}{cc} \alpha' & \beta' \\ \gamma' & \delta' \end{array} \right) \]
Then $[f]_{(c,d)}$ is given in terms of $[f]_\dc$ by the following matrix composition:
\[  \left( \begin{array}{ccc} \alpha' &	\ &  \beta' \\ 	& 	& \\ \gamma' &	\ &  \delta' \end{array}\right)  = 
\left(\begin{array}{ccc} u_{00}^\ddag & \ & u_{10}^\ddag \\	& & 	\\  u_{01}^\ddag & \	&  u_{11}^\ddag \end{array}\right) 
 \left( \begin{array}{ccc} \alpha &	\ &  \beta \\ 	& & \\ 	\gamma &	\ &  \delta \end{array}\right)
\left(\begin{array}{ccc} u_{00} &	\ &  u_{01} \\ 	& & 	\\ u_{10} &	 \ &  u_{11} \end{array}\right) 
\]
where 
\[ 
\left(\begin{array}{ccc} u_{00} & \ \  &  u_{01} \\ 	& & 	\\ u_{10} & \ \ & u_{11} \end{array}\right) = 
\left( \begin{array}{ccc} \phi_{\dc}(p)\phi_{(c,d)}(p^\ddag) & 	\ \ 	& \phi_{\dc}(p)\phi_{(c,d)}(q^\ddag) \\ 
					& & 			\\
\phi_\dc(q)\phi_{(c,d)}(p^\ddag) & 	\ \ 	& \phi_\dc(q)\phi_{(c,d)}(q^\ddag) \end{array} \right) \]
\end{theorem}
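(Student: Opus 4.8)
The plan is to recognise the two outer matrices in the claimed product as the matrix representations, relative to $(S,\code,\decode)$, of the unitary $U$ and of its generalised inverse $U^{\ddag}=U^{-1}$, and then to reduce the whole statement to the single identity $[f]_{(c,d)}=[U^{-1}fU]_{\dc}$, which is an immediate consequence of the composition law $[gf]_{\dc}=[g]_{\dc}[f]_{\dc}$ established in the matrix-representation theorem of the previous subsection.

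First I would compute the entries of $[f]_{(c,d)}$ directly from the definition: writing $x_{0}=p$ and $x_{1}=q$, the $(i,j)$ entry is $\phi_{(c,d)}(x_{i})\,f\,\phi_{(c,d)}(x_{j}^{\ddag})$. Substituting the identities of the preceding lemma, $\phi_{(c,d)}(x_{i})=\phi_{\dc}(x_{i})U^{-1}$ and $\phi_{(c,d)}(x_{j}^{\ddag})=U\phi_{\dc}(x_{j}^{\ddag})$, turns this entry into $\phi_{\dc}(x_{i})\,(U^{-1}fU)\,\phi_{\dc}(x_{j}^{\ddag})$, which is exactly the $(i,j)$ entry of $[U^{-1}fU]_{\dc}$; hence $[f]_{(c,d)}=[U^{-1}fU]_{\dc}$ componentwise. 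Next, applying the composition theorem twice --- and noting that associativity of composition in $\PI$ makes matrix composition of these representations associative, so that all the joins involved exist --- gives $[U^{-1}fU]_{\dc}=[U^{-1}]_{\dc}\,[f]_{\dc}\,[U]_{\dc}$. It then remains to identify the two transition matrices. Using the lemma once more, $u_{ij}=\phi_{\dc}(x_{i})\phi_{(c,d)}(x_{j}^{\ddag})=\phi_{\dc}(x_{i})\,U\,\phi_{\dc}(x_{j}^{\ddag})$, which is precisely the $(i,j)$ entry of $[U]_{\dc}$, so the right-hand array $(u_{ij})$ is $[U]_{\dc}$. For the left-hand array, since $\phi_{\dc}$ is an inverse-monoid homomorphism we have $\phi_{\dc}(x)^{\ddag}=\phi_{\dc}(x^{\ddag})$, whence $u_{ji}^{\ddag}=\bigl(\phi_{\dc}(x_{j})\,U\,\phi_{\dc}(x_{i}^{\ddag})\bigr)^{\ddag}=\phi_{\dc}(x_{i})\,U^{\ddag}\,\phi_{\dc}(x_{j}^{\ddag})$, i.e.\ the $(i,j)$ entry of $[U^{\ddag}]_{\dc}=[U^{-1}]_{\dc}$ (using unitarity of $U$). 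Comparing index positions shows that the array carrying $u_{ji}^{\ddag}$ in position $(i,j)$ is exactly the transposed-dagger array written on the left of the displayed formula, and assembling the three identifications proves the theorem.

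The only genuinely delicate point is the index bookkeeping in this last identification: one must keep careful track of how taking a generalised inverse of a product $\phi_{\dc}(x)\,U\,\phi_{\dc}(y^{\ddag})$ both interchanges the outer indices and replaces $U$ by $U^{\ddag}$, which is precisely why $[U^{-1}]_{\dc}$ appears transposed (i.e.\ as $u_{ji}^{\ddag}$ in slot $(i,j)$) rather than directly. Everything of substance --- existence of the required joins, associativity of matrix composition, faithfulness of the representation --- has already been delivered by the matrix-representation theorem, and the relations $c=U\code$, $d=\decode U^{-1}$ needed to invoke the preceding lemma come directly from Proposition \ref{unique}; so no further coherence-theoretic or inverse-semigroup input is required.
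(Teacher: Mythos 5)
Your proof is correct, and it is organised differently from the paper's. The paper's own argument is a direct expansion: it multiplies out the three matrices entrywise and recombines the resulting joins using the two strong-embedding resolutions of the identity, $\phi_\dc(p^\ddag p)\vee\phi_\dc(q^\ddag q)=1_S=\phi_{(c,d)}(p^\ddag p)\vee\phi_{(c,d)}(q^\ddag q)$, to recognise the entries $\phi_{(c,d)}(x_i)f\phi_{(c,d)}(x_j^\ddag)$ of $[f]_{(c,d)}$; it presents this as a ``long direct calculation''. You instead modularise: you observe that the entries of $[f]_{(c,d)}$ are those of $[U^{-1}fU]_\dc$ via the preceding lemma, invoke the already-proved multiplicativity $[gf]_\dc=[g]_\dc[f]_\dc$ twice to get $[U^{-1}]_\dc[f]_\dc[U]_\dc$, and then identify the two transition matrices in the statement as exactly $[U^{-1}]_\dc$ and $[U]_\dc$ (your index bookkeeping for the transposed-dagger array on the left, using $\phi_\dc(x^\ddag)=\phi_\dc(x)^\ddag$ and unitarity of $U$ in $\PI$, is right, and matches the displayed formula). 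The resolution-of-identity work that the paper performs by hand is thereby delegated to the proof of the composition theorem, where it was already done. What your route buys is a conceptual reading of the theorem as the conjugation formula $[f]_{(c,d)}=[U^{-1}]_\dc[f]_\dc[U]_\dc$, together with a guarantee (via the composition theorem) that all the joins in the triple product exist; what the paper's route buys is independence from the composition theorem as a black box, at the cost of an unilluminating computation. One small point worth making explicit if you write this up: the statement is phrased for arbitrary self-similar structures in $(\PI,\uplus)$, but since every isomorphism in an inverse category is unitary, they are automatically $\dagger$ self-similar structures, so Proposition \ref{unique} and the preceding lemma (which supply $c=U\code$, $d=\decode U^{-1}$ and the four $\phi$-identities) do apply, exactly as you assume.
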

\begin{proof}
Long direct calculation, expanding out the definition of the above matrix representations, will demonstrate that 
\[ \left( \begin{array}{ccc} \alpha' &\ \  & \beta'  \\  & \ \ & \\ \gamma' & & \delta' \end{array} \right) = 
 \left( \begin{array}{ccc} 	\phi_{(c,d)}(p) f \phi_{(c,d)}(p^\ddag)  	& \ \ 	&  \phi_{(c,d)}(p)f \phi_{(c,d)}(q^{\ddag}) \\ 
														&	&							\\
						   	\phi_{(c,d)}(q) f \phi_{(c,d)}(p^{\ddag})  	& \ \ 	& \phi_{(c,d)}(q)f \phi_{(c,d)}(q^{\ddag}) \\ 	
							\end{array} \right) \]
as a consequence of the identities 
\[ \phi_\dc(p^\ddag p) \vee \phi_\dc(q^\ddag q) = 1_S = \phi_{(c,d)}(p^\ddag p ) \vee \phi_{(c,d)}(q^\ddag q) \]
\end{proof}

\subsection{Diagonalisations  of matrices via isomorphisms of self-similar structures}
A useful application of basis changes in linear algebra is to construct diagonalisations of matrices. For a matrix $M = \left( \begin{array}{cc} A & B \\ C & D \end{array}\right) $ over a vector space $V=V_1\oplus V_2$, a {\em diagonalisation} is a linear isomorphism $D$ satisfying 
$D^{-1} M D = \left(\begin{array}{cc} A' & 0 \\ 0 & B' \end{array}  \right)$,
for some elements $A',B'$. We demonstrate how this notion of diagonalisation has a direct analogue at self-similar objects of $(\PI,\uplus)$, and provide a necessary and sufficient condition (and related construction) for an arrow to be diagonalised by an isomorphism of self-similar structures.

\begin{definition}{\em Diagonalisation at self-similar objects of $(\PI,\uplus)$}\\
Let $(S,\code,\decode)$ be a self-similar structure of $(\PI,\uplus)$ and let $\in \PI(S,S)$ be an arrow with matrix representation $[f]_\dc = \left( \begin{array}{cc} \alpha & \beta \\ \gamma & \delta \end{array}\right)$. We define a {\bf diagonalisation} of this matrix representation to be a self-similar structure $(S,c,d)$ such that $[f]_{(c,d)} = \left(\begin{array}{cc} \lambda & 0 \\ 0 & \mu \end{array} \right)$, so the matrix conjugation given in Theorem \ref{changebasis} satisfies
\[  \left( \begin{array}{ccc} \lambda &	\ &  0 \\ 	& 	& \\ 0  &	\ &  \mu \end{array}\right)  = 
\left(\begin{array}{ccc} u_{00}^\ddag & \ & u_{10}^\ddag \\	& & 	\\  u_{01}^\ddag & \	&  u_{11}^\ddag \end{array}\right) 
 \left( \begin{array}{ccc} \alpha &	\ &  \beta \\ 	& & \\ 	\gamma &	\ &  \delta \end{array}\right)
\left(\begin{array}{ccc} u_{00} &	\ &  u_{01} \\ 	& & 	\\ u_{10} &	 \ &  u_{11} \end{array}\right) 
\]
\end{definition}

We now characterise when the matrix representation of an arrow (w.r.t. a certain self-similar structure) may be diagonalised by another self-similar structure:

\begin{theorem}
Let $(S,\code,\decode)$ and $(S,c,d)$  be self-similar structures of $(\PI,\uplus)$ at the same object, giving rise to strong embeddings $\phi_\dc , \phi_{(c,d)} : P_2\rightarrow \PI(S,S)$ and (equivalently)  internalisations of the disjoint union 
\[ \_ \uplus_\dc \_  ,  \_ \uplus_{(c,d)} \_ : \PI(S,S)\times \PI(S,S)\rightarrow \PI(S,S) \]
The matrices  representations that may be diagonalised by  the unique isomorphism between $(S,\code,\decode)$ and $(S,c,d)$ are exactly those of the form
\[ \left( \begin{array}{ccc} \phi_\dc(p)(X\uplus_{(c,d)} Y) \phi_\dc(p^{\ddag}) & 	\	\	& \phi_\dc(p)(X\uplus_{(c,d)} Y) \phi_\dc(q^{\ddag}) \\
							& 	\	\	& 					\\
				\phi_\dc(q)(X\uplus_{(c,d)} Y) \phi_\dc(p^{\ddag}) & 	\	\	&  \phi_\dc(q)(X\uplus_{(c,d)} Y) \phi_\dc(q^{\ddag}) \end{array} \right)
\]
\end{theorem}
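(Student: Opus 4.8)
The plan is to reduce the statement to two facts already established in the paper: first, that for any self-similar structure the internalisation of the disjoint union has the ``obvious'' diagonal matrix representation, i.e.\ $[X \uplus_{(c,d)} Y]_{(c,d)} = \left(\begin{array}{cc} X & 0_S \\ 0_S & Y\end{array}\right)$ (this is the Remark immediately following the matrix-representation theorem, read off for the structure $(S,c,d)$); and second, that matrix representations determined by a fixed self-similar structure are faithful (part of that same theorem). Everything else is bookkeeping between the two self-similar structures.

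First I would unwind the definition of diagonalisation: an arrow $f \in \PI(S,S)$ is diagonalised by the unique isomorphism between $(S,\code,\decode)$ and $(S,c,d)$ exactly when $[f]_{(c,d)} = \left(\begin{array}{cc} \lambda & 0_S \\ 0_S & \mu\end{array}\right)$ for some $\lambda,\mu \in \PI(S,S)$. Since this diagonal matrix is precisely $[\lambda \uplus_{(c,d)} \mu]_{(c,d)}$, faithfulness of the $(c,d)$-representation turns this into the equivalent statement that $f = X \uplus_{(c,d)} Y$ for some $X,Y \in \PI(S,S)$. In other words, the arrows diagonalisable by the given isomorphism are exactly those lying in the image of $\_ \uplus_{(c,d)} \_$.

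The second half is to transport this image condition across to the $\dc$-representation. By the very definition of the matrix representation determined by $(S,\code,\decode)$, applied to the element $X \uplus_{(c,d)} Y$, the matrix $[X \uplus_{(c,d)} Y]_\dc$ is exactly the array displayed in the statement. Hence if $f$ is diagonalisable then $[f]_\dc$ has the stated form; conversely, if $[f]_\dc$ has the stated form for some $X,Y$, then $[f]_\dc = [X \uplus_{(c,d)} Y]_\dc$, so faithfulness of the $\dc$-representation forces $f = X \uplus_{(c,d)} Y$, which we have just seen is equivalent to diagonalisability.

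I do not expect a genuine obstacle here: the content is fully absorbed into the behaviour of internalised tensors and into faithfulness, and the only thing to keep straight is that ``$[f]_{(c,d)}$ diagonal'' and ``$[f]_\dc$ of the stated form'' are two descriptions of the single condition that $f$ lies in the image of $\_ \uplus_{(c,d)} \_$, bridged by faithfulness of each representation on that one element. For readers who prefer a direct computation, one can instead substitute the stated form of $[f]_\dc$ into the matrix-conjugation formula of Theorem \ref{changebasis} and simplify using $\phi_\dc(p^\ddag p) \vee \phi_\dc(q^\ddag q) = 1_S$ to recover $[f]_{(c,d)} = \left(\begin{array}{cc} X & 0_S \\ 0_S & Y\end{array}\right)$; this is the longer route, so I would relegate it to a remark.
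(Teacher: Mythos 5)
Your proposal is correct, and its overall decomposition coincides with the paper's: both arguments rest on identifying the arrows diagonalisable by the unique isomorphism with exactly the arrows in the image of $\_\uplus_{(c,d)}\_$, and then reading off the $\dc$-representation of such an arrow. Where you differ is in how the bridge is built. The paper conjugates the diagonal matrix $[X\uplus_{(c,d)}Y]_{(c,d)}$ explicitly by the change-of-representation matrix of Theorem \ref{changebasis}, producing the intermediate array of joins $pr^{-1}Xrp^{-1}\vee qr^{-1}Yrq^{-1}$, etc., and then compares it with the explicit form of the internalisation $\_\uplus_{(c,d)}\_$; the converse direction is dispatched with the terse remark that $X,Y$ were arbitrary. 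You avoid that computation entirely by observing that the displayed matrix is, by definition, $[X\uplus_{(c,d)}Y]_\dc$, and by invoking faithfulness of the matrix representations (proved at the end of the matrix-representation theorem) twice: once for $[\cdot]_{(c,d)}$ to pass from ``$[f]_{(c,d)}$ diagonal with entries $\lambda,\mu$'' to $f=\lambda\uplus_{(c,d)}\mu$ (using the Remark that $[\lambda\uplus_{(c,d)}\mu]_{(c,d)}$ is the diagonal matrix), and once for $[\cdot]_\dc$ to pass from ``$[f]_\dc$ of the stated form'' back to $f=X\uplus_{(c,d)}Y$. This buys a shorter argument and, usefully, makes explicit the faithfulness step that the paper's converse direction leaves implicit; what it gives up is the concrete intermediate formula that the paper's conjugation computation exhibits, which you correctly note can be recovered as an optional direct calculation via Theorem \ref{changebasis} and the strong-embedding identity $\phi_\dc(p^\ddag p)\vee\phi_\dc(q^\ddag q)=1_S$.
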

\begin{proof} {\em In the following proof, we abuse notation slightly for purposes of clarity. We will denote 
\[ \phi_\dc(p), \phi_\dc(q) , \phi_\dc(p^\ddag) ,\phi_\dc(q^\ddag)\in \PI(S,S) \] by $p,q,p^\ddag,q^\ddag\in \PI(S,S)$, 
and similarly, denote 
\[  \phi_{(c,d)}(p), \phi_{(c,d)}(q) , \phi_{(c,d)}(p^\ddag) ,\phi_{(c,d)}(q^\ddag)\in \PI(S,S) \]  by $r,s,r^\ddag,s^\ddag\in \PI(S,S)$.}

Given arbitrary arrows $X,Y\in \PI(S,S)$, then  $[X\uplus_{(c,d)} Y]_{(c,d)}  =  \left( \begin{array}{cc} X & 0 \\ 0 & Y \end{array} \right)$, and all diagonal matrix representations (w.r.t. $(S,c,d)$) are of this form.  Let us now conjugate such a diagonal matrix by inverse of the matrix $U$ derived from Theorem \ref{changebasis}; this gives $U^{-1}[X\uplus_{(c,d)}Y]_{(c,d)} U =$
\[  \left( \begin{array}{ccc} pr^{-1}Xrp^{-1} \vee qr^{-1} Y rq^{-1} &	\	\	&  pr^{-1}X r q^{-1} \vee ps^{-1} Y sq^{-1} \\
		&		&		\\
							qr^{-1}Xrp^{-1} \vee qs^{-1}Ysp^{-1} &	\ 	\	& 	 qr^{-1} X rq^{-1} \vee qs^{-1} Y sq^{-1} 
\end{array}\right) \]
Comparing this with the explicit form of the internalisation of the disjoint union by the self-similar structure $(S,c,d)$ gives 
\[ \left( \begin{array}{ccc} p(X\uplus_{(c,d)} Y) p^{-1} &	\ \ &  p(X\uplus_{(c,d)} Y) q^{-1} \\
			&		&				\\
				q(X\uplus_{(c,d)} Y) p^{-1} &	\ \ 	&  q(X\uplus_{(c,d)} Y) q^{-1} \end{array} \right)
\]
Therefore all matrices of this form are diagonalised by the unique isomorphism from $(S,\code,\decode)$ to $(S,c,d)$.   Conversely, as $X,Y\in \PI(S,S)$ were chosen arbitrarily, all matrices diagonalised by this unique isomorphism are of this form.
\end{proof}

\begin{remark} It is worth emphasising that the above theorem characterises those matrix representations that may be diagonalised by a particular self-similar structure; it does not address the question of whether there exists a self-similar structure that diagonalises a particular matrix representation.  For the particular example of $\mathbb N$ as a self-similar object, an arrow $f\in \PI(\mathbb N,\mathbb N)$ is diagonalisable iff there exists a partition of $\mathbb N$ into disjoint infinite subsets $A\cup B= \mathbb N$ such that $f(A)\subseteq A$ and $f(B)\subseteq B$. Simple cardinality arguments will demonstrate that this question is undecidable in general.
\end{remark}

\section{Conclusions}
If nothing else, this paper has hopefully demonstrated that, although superficially dissimilar, the notions of copying derived from quantum mechanics and from logic (and categorical linguistics) are nevertheless closely connected. However, these connections are not apparent unless we allow for the definitions in both cases to be taken up to canonical isomorphisms. 

\section{Acknowledgements}
The author wishes to acknowledge useful discussions, key ideas, and constructive criticism from a variety of sources, with particular thanks to S. Abramsky, B. Coecke, C. Heunen, M. Lawson, P. Panangaden, and P. Scott.

 \bibliographystyle{plain}
\bibliography{lambek_refs}

\begin{thebibliography}{10}

\bibitem{AHS}
S.~Abramsky, E.~Haghverdi, and P.~Scott.
\newblock Geometry of interaction and linear combinatory algebras.
\newblock {\em Mathematical Structures in Computer Science}, 12 (5), 2002.

\bibitem{AH}
S.~Abramsky and C.~Heunen.
\newblock H*-algebras and nonunital frobenius algebras: first steps in
  infinite-dimensional categorical quantum mechanics.
\newblock {\em arXiv:1011.6123v3 [quant-ph]}, 2011.

\bibitem{CPV}
J.~Vicary B.~Coecke, D.~Pavlovic.
\newblock A new description of orthogonal bases.
\newblock {\em Mathematical Structures in Computer Science}, to appear.
\newblock arXiv:0810.0812 [quant-ph].

\bibitem{RC}
J.R.B. Cockett and S.~Lack.
\newblock Restriction categories i: categories of partial maps.
\newblock {\em Theoretical Computer Science}, 270:223--259, 2002.

\bibitem{CP}
B.~Coecke and D.~Pavlovic.
\newblock Quantum measurements without sums.
\newblock In G.~Chen, L.~Kauffman, and S.~Lamonaco, editors, {\em Mathematics
  of Quantum Computing and Technology}. Chapman \& Hall, 2007.
\newblock arxiv.org/quant-ph/0608035.

\bibitem{DR}
V.~Danos and L.~Regnier.
\newblock Local and asynchronous beta reduction.
\newblock In {\em Proceedings of the Eighth Annual IEEE Symp. on Logic in
  Computer Science}, 1993.

\bibitem{RG}
R.~Gilman.
\newblock formal languages and infinite groups.
\newblock In G.~Baumslag, D.~Epstein, R~Gilman, H.~Short, and C.~Sims, editors,
  {\em Geometric and Computational Perspectives on Infinite Groups}, volume~25
  of {\em Discrete Mathematics and Theoretical Computer Science}, pages 27--51.
  American Mathematical Society, 1996.

\bibitem{GOI}
J.-Y. Girard.
\newblock Geometry of interaction 1.
\newblock In {\em Proceedings Logic Colloquium Õ88}, pages 221--260.
  North-Holland, 1988.

\bibitem{GOI3}
Jean-Yves Girard.
\newblock Geometry of interaction iii: Accommodating the additives.
\newblock {\em Advances in linear logic}, 222:329, 1995.

\bibitem{HA}
E.~Haghverdi.
\newblock {\em A categorical approach to linear logic, geometry of proofs and
  full completeness}.
\newblock PhD thesis, University of Ottawa, 2000.

\bibitem{CH}
C.~Heunen.
\newblock On the functor $l_2$.
\newblock In B.~Coecke, L.~Ong, and P.~Panangaden, editors, {\em Abramsky
  Festschrift}, volume~25 of {\em Lecture Notes in Computer Science}. Springer,
  2013 (to appear).

\bibitem{PH98}
P.~Hines.
\newblock {\em The algebra of self-similarity and its applications}.
\newblock PhD thesis, University of Wales, Bangor, 1997.

\bibitem{PH99}
P.~Hines.
\newblock The categorical theory of self-similarity.
\newblock {\em Theory and Applications of Categories}, 6:33--46, 1999.

\bibitem{PH13a}
P.~Hines.
\newblock A categorical analogue of the monoid semi-ring construction.
\newblock {\em Mathematical Structures in Computer Science}, 23(1):55--94,
  2013.

\bibitem{PHarxiv}
P.~Hines.
\newblock coherence in hilbert's hotel.
\newblock {\em arXiv: math.CT}, 2013.

\bibitem{PH13}
P.~Hines.
\newblock Types and forgetfulness in categorical linguistics and quantum
  mechanics.
\newblock In E.~Grefenstette C.~Heunen, M.~Sadrzadeh, editor, {\em Categorical
  Information flow in Physics and Linguistics}, pages 215--248. Oxford
  University Press, 2013.

\bibitem{JK}
J.~Kock.
\newblock Elementary remarks on units in monoidal categories.
\newblock {\em Math. Proc. Cambridge Phil. Soc.}, 144:53--76, 2008.

\bibitem{MVL}
M.~V. Lawson.
\newblock {\em Inverse semigroups: the theory of partial symmetries}.
\newblock World Scientific, Singapore, 1998.

\bibitem{blatant}
T.~Leinster.
\newblock A general theory of self-similarity.
\newblock {\em Advances in Mathematics}, 226:2935--3017, 2011.

\bibitem{MCL}
S.~MacLane.
\newblock {\em {Categories for the working mathematician}}.
\newblock Springer-Verlag, New York, second edition, 1998.

\bibitem{NP}
M.~Nivat and J.~Perrot.
\newblock Une g\'en\'eralisation du mon\"{o}ide bicyclique.
\newblock {\em Comptes Rendus de l'Acad\'emie des Sciences de Paris},
  27:824--827, 1970.

\bibitem{ND}
A.~K. Pati and S.~L. Braunstein.
\newblock Impossibility of deleting an unknown quantum state.
\newblock {\em Nature}, 404:104, 2000.

\bibitem{GP}
G.B. Preston.
\newblock Representation of inverse semi-groups.
\newblock {\em J. London Math. Soc.}, 29:411--419, 1954.

\bibitem{MP}
W.~D. W.~D.~Munn and R.~Penrose.
\newblock A note on inverse semigroups.
\newblock {\em Mathematical Proceedings of the Cambridge Philosophical
  Society}, 51:396--399, 1955.

\bibitem{NC}
W.~Zurek W.~Wooters.
\newblock A single quantum cannot be cloned.
\newblock {\em Nature}, 299:802--803, 1982.

\bibitem{VW}
V.~V. Wagner.
\newblock Generalised groups.
\newblock {\em Proceedings of the USSR Academy of Sciences}, 84:1119--1122,
  1952.

\bibitem{FANOUT}
P.~H\o yer and R.~\v{S}palek.
\newblock Quantum fan-out is powerful.
\newblock {\em Theory of Computing}, 1:81--103, 2005.

\end{thebibliography}

\appendix

\section{Relating coherence for symmetric $\dagger$ semi-monoidal categories with coherence for internalised tensors}\label{simplecoherence}
Let  $(S,\code,\decode)$ be a $\dagger$ self-similar structure of a symmetric $\dagger$ semi-monoidal category $(\C,\otimes, \tau_{\_,\_,\_},\sigma_{\_,\_},(\ )^\dagger)$, and let the internal tensor and induced  canonical isomorphisms be denoted $\_ \otimes_\dc \_ :\C(S,S)\times\C(S,S)  \rightarrow \C(S,S)$ and $\tau_\dc,\sigma_\dc\in \C(S,S)$.  We consider the question of when a categorical diagram built inductively from the following toolkit 
\[ \{ S\in Ob(\C) , \_\otimes\_ , \tau_{\_,\_,\_} , \sigma_{\_ ,\_} , \_ \otimes_\dc \_ , \tau_\dc , \sigma_\dc , (\ )^\dagger \} \]
may be guaranteed to commute?

We will exhibit a sufficient condition for coherence of diagrams built inductively from this toolkit  that is closely related to MacLane's coherence theorems for associativity and symmetry \cite{MCL}, and the coherence theorem for self-similarity given in \cite{PHarxiv}, but first give some some preliminary definitions and results:

\begin{definition}
We define $Tree_S$ to be the set of free non-empty binary trees over a single symbol $S$ inductively, as follows:
\begin{itemize}
\item $S\in Tree_S$,
\item Given $A,B\in Tree_S$, then $(A\Box B)\in Tree_S$.
\end{itemize}
We will write members of $Tree_S$ as bracketed strings in the obvious way, such as $(S\Box((S\Box S)\Box S)\in Tree_S$.
\end{definition}

There exists an obvious map that assigns objects of $\C$ to members of $Tree_S$ given by instantiating each occurrence of the formal symbol $\Box$, as follows:

\begin{definition}\label{inst}
Given free non-empty binary tree $X\in Tree_S$, we define an object $Inst(X)\in Ob(\C)$. We give this assignment inductively:
\begin{itemize}
\item $Inst(S)=S\in Ob(\C)$.
\item $Inst(A\Box B) = Inst(A) \otimes Inst(B)\in Ob(\C)$
\end{itemize}
Note that the same object of $\C$ may be assigned to distinct members of $\C$ by this definition -- consider the special case where $(\C,\otimes)$ is strictly associative. See \cite{} for other examples not based on strict associativity. 
\end{definition}

We now give a symmetric $\dagger$ semi-monoidal category whose objects are free binary trees over $S$, and whose hom-sets are copies of hom-sets of $\C$. 

\begin{definition}
We define the symmetric $\dagger$ semi-monoidal category $(\F_S,\Box)$ as follows: 
\begin{itemize}
\item {\bf (Objects)} $Ob(\F_S)=Tree_S$.
\item {\bf (Arrows)} $\F_S(X,Y)=\C(Inst(X),Inst(Y)$
\item {\bf (Composition)} This is inherited from $\C$ in the obvious way. 
\item {\bf (Tensor)} 
\begin{itemize}
\item {\bf (On Objects)} Given $A,B\in Ob(F_S)$, then their tensor is the formal tree $A\Box B$.
\item {\bf (On Arrows)} Given $f\in \F_S(A,B)$ and $g\in \F_S(X,Y)$, then 
\[ f\Box g \ = \ f\otimes g\in \C(Inst(A)\otimes Inst(X),Inst(B)\otimes Inst(Y))  \]
Note that $\C(Inst(A)\otimes Inst(X),Inst(B)\otimes Inst(Y)) =$ \[  \C(Inst(A\Box X),Inst(B\Box Y)) = \F_S(A\Box X,B\Box Y) \] as required.
\end{itemize}
\item {\bf (Canonical Isomorphisms)} 
\begin{itemize}
\item Given $A,B,C\in Ob(\F_S)$, the associativity isomorphism 
\[ \tau_{A,B,C}\in \F_S(A\Box(B\Box C),(A\Box B)\Box C) \] 
is given by 
\[ \tau_{A,B,C}=\tau_{Inst(A),Inst(B),Inst(C)}\in \C(Inst(A\Box(B\Box C)),Inst((A\Box B)\Box C) \]
\item Given $A,B,\in Ob(\F_S)$, the symmetry isomorphism $\sigma_{A,B}\in \F_S(A\Box B),(B\Box A)$ is given by 
\[ \sigma_{A,B}=\sigma_{Inst(A),Inst(B)}\in \C(Inst(A\Box B),Inst(B\Box A)) \]
\end{itemize}
\item {\bf (Dagger Operation)} Given $X\in Ob(\F_S)$, we define, on objects, $X^\dagger =X$. On arrows, the dagger is inherited from $\C$, so given $f\in \F_S(X,Y)$, 
\[ f^\dagger = f^\dagger\in \C(Inst(Y),Inst(X)) = \F_S(Y,X) \]
\end{itemize}
\end{definition}

The symmetric $\dagger$ semi-monoidal category $(\F_S,\Box,(\ )^\dagger)$ may be thought of as a `well-behaved' version of the subcategory of $(\C,\otimes)$ generated by $S\in Ob(\C)$ --- in particular, all canonical (for associativity) diagrams over $(\F_S,\Box)$ commute (we refer to \cite{} for a proof of this, and a discussion of why this is not true in arbitrary monoidal or semi-monoidal categories). 

The $Inst$ mapping of Definition \ref{inst} lifts to a $\dagger$ semi-monoidal functor from $(\F_S,\Box)$ to $(\C,\otimes)$ in the obvious way:

\begin{definition}
We define 
$Inst: (\F_S,\Box,(\ )^\dagger)\rightarrow (\C,\otimes ,(\ )^\dagger)$ as follows:
\begin{itemize}
\item {\bf (Objects)}For all $T\in Ob(\F_S)=Tree_S$, we take $Inst(T)$ to be as in Definition \ref{inst}.
\item {\bf (Arrows)} Given $f\in \F_S(X,Y)$, we define 
\[ Inst(f)=f\in \C(Inst(X),Inst(Y))=\F_S(X,Y) \]
\end{itemize}
By contrast with the  familiar``ioof's'' ({\em Identity on Objects Functors}), this functor is the identity on arrows (i.e. homsets). Given this fact, it is straightforward to prove that $Inst$ is not only functorial, but preserves both the (semi-) monoidal structure and the dagger operation.
\end{definition}

As well as the above $\dagger$ monoidal functor, there exists another dagger monoidal functor to the endomorphism monoid  $\C(S,S)$, equipped with the  internalised tensor $\_ \otimes_\dc \_$. This is based on the following inductively defined arrows:
\begin{definition}
For all objects $X\in Ob(\F_S)$, we define the {\bf generalised code / decode} arrows $\code_X\in \F(X,S)$ and $\decode_X\in \F(S,X)$ 
inductively, as follows
\begin{itemize}
\item $\code_S=1_S\in \F_S(S,S)$
\item $\code_{A\Box B}=\code(\code_A\Box \code_B) \in \F_S(A\Box B,S)$
\item $\decode_X=\code_X^\dagger$
\end{itemize}
\end{definition}

The following definition is an extension of a construction of \cite{PH98,PHarxiv} to the $\dagger$ commutative setting. 
\begin{definition}
The {\bf generalised convolution functor} $\Phi_\dc:(\F_S,\Box, (\ )^\dagger )\rightarrow (\C(S,S),\otimes_\dc, ( \ )^\dagger )$ is defined by:
\begin{itemize}
\item {\bf (Objects)} $\Phi_\dc(X)=S$, for all $X\in Ob(\F_S)$.
\item {\bf (Arrows)} $\Phi(f)=\code_Yf\decode_X$ for all $f\in \F_S(X,Y)$.
\end{itemize}
Since both $\code_Y$ and $\decode_X$ are unitary, for all $X,Y\in Ob(\F_S)$,  this functor is fully faithful. A simple inductive argument (given in \cite{PHarxiv}) demonstrates that it is also a (semi-)monoidal functor, and thus, for all $X,Y,Z\in Ob(\F_S)$, 
\[ \Phi_\dc(\tau_{X,Y,Z})= \tau_\dc \ \ , \ \ \Phi_\dc(\sigma_{X,Y}) = \sigma_\dc \]
To see that it also preserves the dagger, note that 
\[ \Phi_\dc (f)^\dagger = \left( \code_Y f \decode_X\right)^\dagger = \code_X f^\dagger \decode_Y = \Phi_\dc\left( f^\dagger \right) \]
\end{definition}

We now use the toolkit developed to give a sufficient (albeit non-constructive) condition for commutativity of `mixed' diagrams built inductively from 
\[  \{  S\in Ob(\C) \ ,\  \_\otimes\_ \ ,\  \tau_{\_,\_,\_} \ ,\  \sigma_{\_ ,\_} \ ,\  \_ \otimes_\dc \_ \ ,\  \tau_\dc \ ,\  \sigma_\dc \ ,\  (\ )^\dagger \} \]
\begin{lemma}\label{abovebelow}
Let $\mathfrak D$ be an arbitrary diagram over $\F_S$. Then 
\begin{enumerate}
\item $\mathfrak D$ commutes $\Rightarrow$  $Inst(\mathfrak D)$ commutes.
\item $\mathfrak D$ commutes  $\Leftrightarrow$ $\Phi_\dc (\mathfrak D)$ commutes.
\end{enumerate}
\end{lemma}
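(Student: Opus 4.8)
The plan is to treat the two parts separately, using the two functors $Inst$ and $\Phi_\dc$ constructed above together with their structural properties. For part 1, recall that a diagram $\mathfrak D$ in $\F_S$ consists of objects (binary trees), and arrows between the associated hom-sets, and that $Inst$ is a genuine functor: it sends composites to composites and identities to identities. Hence if a path of arrows in $\mathfrak D$ composes to equal another path (which is exactly what ``$\mathfrak D$ commutes'' means), then applying the functor $Inst$ to every arrow in both paths yields two paths in $\C$ whose composites are the images of the two original composites, and these must therefore be equal. Thus $Inst(\mathfrak D)$ commutes. The only point requiring care is that $Inst$ must carry the distinguished arrows appearing in $\mathfrak D$ (the $\tau$'s, $\sigma$'s, $\otimes$-components, daggers) to the corresponding distinguished arrows in $\C$; but this is precisely the content of the preceding definitions, where $Inst$ is shown to preserve the semi-monoidal structure and the dagger. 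So part 1 is essentially immediate from functoriality plus structure-preservation.

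For part 2, the ($\Rightarrow$) direction is identical in spirit: $\Phi_\dc$ is a functor preserving all the relevant structure, in particular $\Phi_\dc(\tau_{X,Y,Z}) = \tau_\dc$, $\Phi_\dc(\sigma_{X,Y}) = \sigma_\dc$, $\Phi_\dc$ is semi-monoidal (so commutes with $\Box$, which it sends to $\otimes_\dc$), and $\Phi_\dc(f^\dagger) = \Phi_\dc(f)^\dagger$. So a commuting diagram in $\F_S$ maps to a commuting diagram in $(\C(S,S),\otimes_\dc)$. The substantive direction is ($\Leftarrow$): here I would invoke the fact, recorded when $\Phi_\dc$ was introduced, that $\Phi_\dc$ is \emph{fully faithful} --- since each $\code_Y$ and $\decode_X$ is unitary, the assignment $f \mapsto \code_Y f \decode_X$ is a bijection $\F_S(X,Y) \to \C(S,S)$ with inverse $g \mapsto \decode_Y g \code_X$. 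Given that $\Phi_\dc(\mathfrak D)$ commutes, suppose two paths $f_n \cdots f_1$ and $g_m \cdots g_1$ in $\mathfrak D$ share source $X$ and target $Y$; then $\Phi_\dc(f_n \cdots f_1) = \Phi_\dc(g_m \cdots g_1)$ as arrows $S \to S$, and since $\Phi_\dc$ is faithful on $\F_S(X,Y)$, we conclude $f_n \cdots f_1 = g_m \cdots g_1$ in $\F_S$. Hence $\mathfrak D$ commutes.

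The only genuine obstacle is bookkeeping: one must be careful that ``diagram over $\F_S$'' is understood so that its arrows are built from the allowed toolkit, so that $Inst$ and $\Phi_\dc$ really do act on them as claimed, and that ``commutes'' is interpreted path-wise (equality of all composites between any fixed pair of nodes) rather than, say, as a property only of designated parallel pairs. With that understanding fixed, both parts follow formally from (a) functoriality, (b) preservation of the structural data $\otimes$, $\tau$, $\sigma$, $(\ )^\dagger$ by each functor, and (c) full faithfulness of $\Phi_\dc$ for the backward implication in part 2. No new computation is needed; the lemma is a clean corollary of the properties of $Inst$ and $\Phi_\dc$ already established. Thus this lemma serves as the bridge that reduces coherence for the mixed toolkit to coherence in $\F_S$, which in turn reduces (via the cited results) to MacLane's classical coherence theorems.
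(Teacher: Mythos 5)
Your proposal is correct and follows essentially the same route as the paper: part 1 and the forward direction of part 2 by functoriality and structure preservation, and the backward direction of part 2 by full faithfulness of $\Phi_\dc$, which holds because the generalised $\code_Y$, $\decode_X$ arrows are unitary. The paper merely illustrates that faithfulness step concretely, by conjugating a sample triangle diagram with the $\code$/$\decode$ arrows and noting the conjugated diagram commutes iff the original does, whereas you state the abstract bijection-on-hom-sets argument directly; the content is the same.
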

\begin{proof}
$ $ \\
\begin{enumerate}
\item This is a simple consequence of the fact that functors preserve commutativity of diagrams.
\item The $\Rightarrow$ implication is trivial. The opposite direction is due to the fact that $\Phi_\dc$ is fully faithful; consider a diagram $\mathfrak D$ over $\mathcal F_S$ such as 
\begin{center}
$ \xymatrix{ 
																					&& Y \ar[drr]^{g}				&&													\\
X \ar[rrrr]_<<<<<<<<<<{h} \ar[urr]^f					&&																	&& Z				\\
}
$
\end{center}
Applying $\Phi_\dc$ to this diagram gives $\Phi_\dc(\mathfrak D)$ as follows:
\begin{center}
$ \xymatrix{ 
																					&& S	 \ar[drr]^{\Phi_\dc(g)}			&& 													\\
S  \ar[rrrr]_<<<<<<<<<<{\Phi_\dc(h)} \ar[urr]^{\Phi_\dc(f )}		&&																	&&  S
}
$
\end{center}
and by definition of $\Phi_\dc$, the following diagram commutes if and only if $\mathfrak D$ commutes, if and only if $\Phi_\dc(\mathfrak D)$ commutes:
\begin{center}
$ \xymatrix{ 
																			&& Y \ar[drr]^{g}  \ar@<0.8ex>[dd]|<<<<<<<<{\  \code_v}					&&													\\
X \ar[rrrr]_<<<<<<<<<<{h} \ar[urr]^f \ar@<0.8ex>[dd]|<<<<<<<<{\  \code_u}					&&																	&& Z	 \ar@<0.8ex>[dd]|<<<<<<<<{\  \code_w}			\\
																					&& S	 \ar[drr]^{\Phi(g)}\ar@<0.8ex>[uu]|<<<<<<<<{ \decode_v}				&& 													\\
S  \ar[rrrr]_<<<<<<<<<<{\Phi(h)} \ar[urr]^{\Phi(f )} \ar@<0.8ex>[uu]|<<<<<<<<{ \decode_u}		&&																	&&  S \ar@<0.8ex>[uu]|<<<<<<<<{ \decode_w}	
}
$
\end{center}
The generalisation of the above reasoning to arbitrary diagrams is then immediate.
 \end{enumerate}
 \end{proof}
 
 The above lemma allows us to reduce the question posed at the beginning of this Appendix to a simpler question that may be resolved by an appeal to well-established theory.  
 
 \begin{proposition}\label{freecommuting}
 Let $\mathcal M$ be a diagram over $\mathcal F_S$ built from the following toolkit:
 \[ \{ S\in Ob(\F_S) , \_\Box\_ , \tau_{\_,\_,\_} , \sigma_{\_ ,\_} , \_ \otimes_\dc \_ , \tau_\dc , \sigma_\dc , (\ )^\dagger \} \]
 The question of whether $\Phi_\dc(\mathfrak M)$ may be guaranteed to commute may be dealt with by an appeal to MacLane's coherence theorem for symmetry and associativity \cite{MCL}.
 \end{proposition}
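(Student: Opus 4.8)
The plan is to \emph{collapse} the mixed diagram $\mathfrak M$ to a purely single-object diagram by applying $\Phi_\dc$, and then \emph{lift} the result to a purely typed diagram over $(\F_S,\Box)$, where MacLane's coherence theorem applies directly. Applying $\Phi_\dc$ sends every tree-object occurring in $\mathfrak M$ to $S$, sends each instance $\tau_{A,B,C}$ to $\tau_\dc$ and each $\sigma_{A,B}$ to $\sigma_\dc$, sends $f\Box g$ to $\Phi_\dc(f)\otimes_\dc\Phi_\dc(g)$ (as $\Phi_\dc$ is a semi-monoidal functor in the strict sense of Definition \ref{semicat}), fixes the already-internalised data --- since $\Phi_\dc(h)=\code_S\,h\,\decode_S=h$ for $h\in\C(S,S)=\F_S(S,S)$, in particular $\Phi_\dc(\tau_\dc)=\tau_\dc$, $\Phi_\dc(\sigma_\dc)=\sigma_\dc$ and $\Phi_\dc(a\otimes_\dc b)=a\otimes_\dc b$ --- and commutes with $(\ )^\dagger$. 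Hence $\Phi_\dc(\mathfrak M)$ is a diagram over $(\C(S,S),\otimes_\dc,\tau_\dc,\sigma_\dc,(\ )^\dagger)$ built from the single-object toolkit alone.

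The decisive step is the converse direction. Because $\Phi_\dc$ is fully faithful, semi-monoidal, and satisfies $\Phi_\dc(\tau_{X,Y,Z})=\tau_\dc$ and $\Phi_\dc(\sigma_{X,Y})=\sigma_\dc$ for \emph{all} $X,Y,Z\in Ob(\F_S)$, any diagram over $(\C(S,S),\otimes_\dc,\tau_\dc,\sigma_\dc,(\ )^\dagger)$ built from the untyped toolkit can be lifted to a diagram $\widehat{\mathfrak M}$ over $\F_S$ built only from the typed toolkit $\{\,S\in Ob(\F_S),\ \_\Box\_,\ \tau_{\_,\_,\_},\ \sigma_{\_,\_},\ (\ )^\dagger\,\}$, with $\Phi_\dc(\widehat{\mathfrak M})=\Phi_\dc(\mathfrak M)$. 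Concretely one argues by induction on the formal structure of the expressions labelling the arrows: each occurrence of the object $S$ is decorated with a tree in $Tree_S$ (which tree is irrelevant to commutativity, but the choices must be propagated consistently through composites and tensors), each $\tau_\dc$ is lifted to the instance $\tau_{A,B,C}$ whose source and target match the surrounding decoration, each $\sigma_\dc$ to $\sigma_{A,B}$, each $a\otimes_\dc b$ to $\widehat a\Box\widehat b$, and $(\ )^\dagger$ to $(\ )^\dagger$. That a globally consistent tree-decoration exists is precisely what the coherence theorem for self-similarity of \cite{PHarxiv} supplies; in the present symmetric $\dagger$ setting the bookkeeping is lighter, as there are no unit isomorphisms to track.

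It then remains to observe that $\widehat{\mathfrak M}$ is a diagram over the symmetric $\dagger$ semi-monoidal category $(\F_S,\Box,\tau,\sigma,(\ )^\dagger)$ built entirely from canonical data. Since every canonical isomorphism of $\F_S$ is unitary, the dagger of such an isomorphism is just its inverse, so adjoining $(\ )^\dagger$ does not take us outside the class of diagrams governed by MacLane's coherence theorem for associativity and symmetry \cite{MCL} (with the customary proviso about degenerate symmetry instances $\sigma_{A,A}$). Thus MacLane's theorem decides whether $\widehat{\mathfrak M}$ commutes; whenever it does, Lemma \ref{abovebelow}(2) gives that $\Phi_\dc(\widehat{\mathfrak M})=\Phi_\dc(\mathfrak M)$ commutes too, which is exactly the assertion.

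I expect the lifting step to be the main obstacle: one must show that $\Phi_\dc$ can be inverted \emph{at the level of formal toolkit expressions}, and that the local inversions glue into a single well-typed diagram over $\F_S$. This is where the real content of the self-similarity coherence theorem of \cite{PHarxiv} is needed --- the $\dagger$-symmetric hypotheses only simplify matters. Two minor points also need care: that the lift is compatible with the dagger (immediate, since $\Phi_\dc$ preserves $(\ )^\dagger$ and $\tau^\dagger,\sigma^\dagger$ are again canonical), and that MacLane's degenerate-symmetry caveat is transferred faithfully between $\widehat{\mathfrak M}$ and $\Phi_\dc(\mathfrak M)$.
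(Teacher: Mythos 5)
Your first paragraph is, in substance, the paper's entire proof: apply $\Phi_\dc$, note that it sends $\tau_{A,B,C}\mapsto\tau_\dc$, $\sigma_{A,B}\mapsto\sigma_\dc$, $\Box\mapsto\otimes_\dc$, fixes arrows already in $\F_S(S,S)$ and preserves $(\ )^\dagger$, so that $\Phi_\dc(\mathfrak M)$ is a canonical diagram for the symmetric semi-monoidal tensor $\otimes_\dc$, to which MacLane's coherence theorems are then applied, with Lemma \ref{abovebelow}(2) tying commutativity of $\Phi_\dc(\mathfrak M)$ to that of $\mathfrak M$. Had you stopped there, you would have reproduced the paper's argument.

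The ``decisive step'' you add --- lifting $\Phi_\dc(\mathfrak M)$ to a purely typed canonical diagram $\widehat{\mathfrak M}$ over $(\F_S,\Box)$ --- contains a genuine overclaim. It is not true that \emph{every} diagram built from the untyped toolkit $\{\otimes_\dc,\tau_\dc,\sigma_\dc,(\ )^\dagger\}$ admits such a lift: the typing constraints can be globally inconsistent. For instance, a single endo-edge labelled $\tau_\dc$ at one node would have to lift to some $\tau_{A,B,C}$ with source and target decorated by the \emph{same} tree, i.e.\ $A\Box(B\Box C)=(A\Box B)\Box C$ in $Tree_S$, which is impossible; and such edges are perfectly admissible in a mixed diagram $\mathfrak M$ over $\F_S$, since $\tau_\dc\in\F_S(S,S)$. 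The existence of a consistent tree-decoration is precisely the non-trivial hypothesis isolated in the theorem that follows this proposition in the appendix (the existence of a diagram $\mathfrak T$ with $Inst(\mathfrak T)=\mathfrak M$ and $\Phi_\dc(\mathfrak T)=\mathfrak C$), and the closing remark stresses that it must be \emph{decided}, not assumed. Citing the self-similarity coherence theorem of \cite{PHarxiv} as ``supplying'' the decoration is also a misattribution: that theorem characterises which untyped canonical diagrams are guaranteed to commute \emph{in terms of} such lifts (were every untyped diagram liftable, every untyped canonical diagram would commute, contradicting $\tau_\dc\neq 1_S$ of Remark \ref{notstrict}). The paper avoids this issue for the present proposition by working downstairs: $\mathfrak M$ is already a diagram over $\F_S$, so no new lift is constructed; one only observes that its image under $\Phi_\dc$ lies in the canonical fragment of $(\C(S,S),\otimes_\dc,\tau_\dc,\sigma_\dc)$, where MacLane's theorems (in their untyped/coherence-for-self-similarity form) apply. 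So your argument, as written, proves only a sufficient criterion conditional on a lift that need not exist, rather than establishing the proposition's reduction in general.
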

 \begin{proof} Recall from Part 2. of Lemma \ref{abovebelow} that  $\Phi_\dc(\mathfrak M)$ commutes if and only if $\mathfrak M$ commutes. Now recall that (almost by definition), for all $X,Y,Z\in Ob(\F_S)$,
 \[ \Phi_\dc(\tau_{X,Y,Z}) = \tau_\dc \ , \ \Phi_\dc(\sigma_{X,Y}) = \sigma_\dc   \]
 Similarly, $\Phi_\dc$ acts as the identity on the endomorphism monoid $\F_S(S,S)$, so 
 \[ \Phi_\dc(\tau_\dc) = \tau_\dc \ , \ \Phi_\dc(\sigma_\dc) = \sigma_\dc   \]
 Finally, since  $\Phi_\dc(f\otimes g)  =\Phi_\dc(f)\otimes_\dc \Phi_\dc(g)$, we may conclude that $\Phi_\dc(\mathfrak M)$ is a canonical diagram for a symmetric monoidal tensor, built from the toolkit
 \[ \{ \_ \otimes_\dc \_ \ , \ \tau_\dc \ , \ \sigma_\dc \} \]
 We have thus reduced the question of commutativity of $\mathcal M$ to a case that may be satisfied by appealing to MacLane's theorems for symmetry and associativity.
 \end{proof}
 
 This does not (yet) quite answer the question posed at the beginning of this Appendix. Recall that $(\F_S,\Box)$ is the monoidal category freely generated by $S\in Ob(\C)$, rather than the subcategory of $(\C,\otimes) $ generated by $S$.  Instead, we require the following result:
  \begin{theorem}
 Let $\mathfrak M$ be a diagram over $\C$ built from the following toolkit:
 \[ \{ S\in Ob(C) , \_\otimes\_ , \tau_{\_,\_,\_} , \sigma_{\_ ,\_} , \_ \otimes_\dc \_ , \tau_\dc , \sigma_\dc , (\ )^\dagger \} \]
Then $\mathfrak M$ is guaranteed to  commute when there exists some canonical (for associativity and commutativity) diagram $\mathfrak C$ over $(\C(S,S),\otimes_\dc)$ that is guaranteed to commute by MacLane's coherence theorems, together with 
some diagram $\mathfrak T$ over $\F_S$, satisfying 
\[ Inst(\mathfrak T) = \mathfrak M\ \ \mbox{ and } \ \ \Phi_\dc(\mathfrak T) =\mathfrak C \]
\end{theorem}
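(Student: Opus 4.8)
The plan is to combine the two halves of Lemma~\ref{abovebelow} with Proposition~\ref{freecommuting}. The hypotheses hand us a diagram $\mathfrak T$ over $\F_S$ that instantiates to $\mathfrak M$ under $Inst$ and maps to $\mathfrak C$ under $\Phi_\dc$; the goal is to deduce that $\mathfrak M$ commutes from the fact that $\mathfrak C$ is a MacLane-canonical diagram over $(\C(S,S),\otimes_\dc)$ that is guaranteed to commute.

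First I would observe that, since $\mathfrak C = \Phi_\dc(\mathfrak T)$ is guaranteed to commute by MacLane's coherence theorems, and since $\Phi_\dc$ is fully faithful (Part~2 of Lemma~\ref{abovebelow}), the diagram $\mathfrak T$ over $\F_S$ must itself commute: commutativity of $\Phi_\dc(\mathfrak T)$ is equivalent to commutativity of $\mathfrak T$. Here I would note the one subtlety that needs care --- the equivalence in Lemma~\ref{abovebelow}(2) is stated for arbitrary diagrams over $\F_S$, so it applies verbatim once we know $\mathfrak T$ is such a diagram; the role of Proposition~\ref{freecommuting} is exactly to certify that a diagram built from the stated toolkit which maps under $\Phi_\dc$ to a MacLane-canonical diagram is one whose $\Phi_\dc$-image is guaranteed to commute, which is the hypothesis we are given on $\mathfrak C$.

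Next I would apply Part~1 of Lemma~\ref{abovebelow}: $Inst$ is a functor, functors preserve commutativity of diagrams, so $\mathfrak T$ commuting forces $Inst(\mathfrak T)$ to commute. Since $Inst(\mathfrak T) = \mathfrak M$ by hypothesis, this gives that $\mathfrak M$ commutes, which is the desired conclusion. The argument is thus a short three-step chain: $\mathfrak C$ commutes $\Rightarrow$ $\mathfrak T$ commutes (full faithfulness of $\Phi_\dc$, Lemma~\ref{abovebelow}(2)) $\Rightarrow$ $\mathfrak M = Inst(\mathfrak T)$ commutes (functoriality of $Inst$, Lemma~\ref{abovebelow}(1)).

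I do not expect a serious obstacle here, since all the heavy lifting --- the construction of $\F_S$, the two functors $Inst$ and $\Phi_\dc$, their full faithfulness and monoidality, and the reduction to MacLane via Proposition~\ref{freecommuting} --- has already been carried out. The only point requiring a little attention is bookkeeping about what ``built from the toolkit'' means on each side of $Inst$: one must check that the toolkit over $\C$ listed in the statement pulls back along $Inst$ to the toolkit over $\F_S$ used in Proposition~\ref{freecommuting}, i.e. that $\tau_{\_,\_,\_}$, $\sigma_{\_,\_}$, $\tau_\dc$, $\sigma_\dc$, $\_\otimes\_$, $\_\otimes_\dc\_$ and $(\ )^\dagger$ on $\C$ are precisely the $Inst$-images of the corresponding data on $\F_S$. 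This is immediate from the definitions of $\F_S$ and $Inst$, so the theorem follows.
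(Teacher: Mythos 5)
Your argument is correct and is essentially the paper's own proof, which likewise just chains Lemma~\ref{abovebelow} (part~2 to pass from commutativity of $\mathfrak C=\Phi_\dc(\mathfrak T)$ to commutativity of $\mathfrak T$, part~1 to push this forward along $Inst$ to $\mathfrak M$) with Proposition~\ref{freecommuting}. The only remark the paper adds that you omit is that the diagram $\mathfrak T$, when it exists, need not be unique, which does not affect the validity of your argument.
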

\begin{proof}
This follows from Proposition \ref{freecommuting} and Lemma \ref{abovebelow} above. Note that a suitable diagram $\mathfrak T$, when it exists, need not be unique.
\end{proof}

\begin{remark}
The above theorem is non-constructive, in that it is based on the pure existence of some appropriate diagram $\mathfrak T$ over $\F_S$. However, starting from the diagram $\mathfrak M$, it is not difficult to conceive of a simple algorithm that will allow us to decide whether or not such a diagram $\mathfrak T$ exists, and provide a concrete example when this is the case. 
\end{remark}

\end{document}